\newtheorem{lemma}[subsection]{Lemma}
\newtheorem{thm}[subsection]{Theorem}
\newtheorem{prop}[subsection]{Proposition}
\newtheorem{rem}[subsection]{Remark}
\newtheorem{coro}[subsection]{Corollary}
\newtheorem{defn}[subsection]{Definition}
\newcommand{\ra}{\rightarrow}
\newcommand{\mo}{\mathcal{O}}
\newcommand{\mf}{\mathcal{F}}
\newcommand{\me}{\mathcal{E}}
\newcommand{\mext}{\mathbb{E}\mathsf{x}\mathsf{t}}
\newcommand{\mhom}{\mathbb{H}\mathsf{o}\mathsf{m}}
\newcommand{\ms}{\mathcal{S}}
\newcommand{\mt}{\mathcal{T}}
\newcommand{\mh}{\mathcal{H}}
\newcommand{\mm}{\mathcal{M}}
\newcommand{\mn}{\mathcal{N}}
\newcommand{\mw}{\mathcal{W}}
\newcommand{\mv}{\mathcal{V}}
\newcommand{\um}{\mathcal{U}}
\newcommand{\ts}{\mathtt{S}}
\newcommand{\mc}{\mathcal{C}}
\newcommand{\mr}{\mathcal{R}}
\newcommand{\ls}{|dH|}
\newcommand{\p}{\mathbb{P}}
\newcommand{\bl}{\mathbb{L}}
\begin{document}
\fontsize{12pt}{14pt} \textwidth=14cm \textheight=21 cm
\numberwithin{equation}{section}
\title{Motivic measures of the moduli spaces of pure sheaves on $\mathbb{P}^2$ with all degrees.}
\author{Yao YUAN}
\date{}
\maketitle
\begin{flushleft}{\textbf{Abstract.}} Let $\mm(d,\chi)$ be the moduli stack of stable sheaves of rank 0, Euler characteristic $\chi$ and first Chern class $dH~(d>0)$, with $H$ the hyperplane class in $\mathbb{P}^2$.  We compute the $A$-valued motivic measure $\mu_A(\mm(d,\chi))$ of $\mm(d,\chi)$ and get explicit formula in codimension $D:=\rho_d-1$, where $\rho_d$ is $d-1$ for $d=p$ or $2p$ with $p$ prime, and $7$ otherwise.  As a corollary, we get the last $2(D+1)$ Betti numbers of the moduli scheme $M(d,\chi)$ when $d$ is coprime to $\chi$.   

\end{flushleft}

%%%%%%%%%%%%%%%%%%%%%%%%%%%%%%%%%%%%%%%%%%%%%%%%%%%%%%%%%%%%%%
%%%%%%%%%%%%%%%%         Section 1         %%%%%%%%%%%%%%%%%%%
\section{Introduction.}
The moduli space $M$ of 1-dimensional semistable sheaves on a surface is very interesting.  Sheaves in $M$ are supported at curves inside the surface.  Hence $M$ seems to be close to a Jacobian family.  Actually, properties of $M$ do sometimes give us some results on (compactified) Jacobians of curves of plannar singularities, such as Corollary 4.2.13 in \cite{yuan} and Corollary 7.6 in \cite{yyf}.  However, $M$ in general is far more complicated than a Jacobian family because there are sheaves supported at curves with very bad singularities (e.g. reducible, non-reduced). 

Many other people have worked on the moduli space $M$, such as \cite{jmdmm},\cite{lee} and \cite{ky}.  In particular, on a K3 or abelian surface,  the deformation equivalence classes of $M$ are known in a large generality by Yoshioka's work in \cite{ky}.     

Let $M(d,\chi)$ be the moduli scheme parametrizing 1-dimensional semistable sheaves on $\p^2$ with rank 0, first Chern class $dH$ for $H$ the hyperplane class, and Euler characteristic $\chi$.  The Pandharipande-Thomas theory defined in \cite{pt} on local 3-folds together with Toda's work in \cite{toda} give a prediction that the Euler number $e(M(d,\chi))$ does not depend on $\chi$ given $d,\chi$ coprime.  Also Physicists have computed $e(M(d,\chi))$ for $d\leq 300$ using their argument not mathematically correct (see Equation (4.2) and Table 4 in Section 8.3 in \cite{kkv}).  Despite that, there is no general explicit statement on $e(M(d,\chi))$,  Betti numbers $b_i(M(d,\chi))$, or Hodge numbers $h^{p,q}(M(d,\chi))$. 

Let $\mm(d,\chi)$ be the stack associated to the same moduli functor as $M(d,\chi)$.  Let $Hilb^{n}(\p^2)$ ($\mh^n$ resp.) be the moduli space at scheme (stack resp.) level of ideal sheaves of colength $n$ on $\p^2$.  Let $\mu_A(-)$ be some $A$-valued motivic measure with $A$ a commutative ring or a field if needed.  Let $A_m$ be the subgroup generated by $\mu_A(\ms)$ with $dim~\ms\leq m.$  Let $\bl:=\mu_A(\mathbb{A})$ with $\mathbb{A}$ the affine line. 

In this paper, we prove the following theorem.
\begin{thm}[Theorem \ref{main}]For and $d>0$ and $\chi$, let $\chi_0\equiv \pm\chi~mod~(d)$ and $-\frac {3d}2\leq\chi_0\leq-d$ (such $\chi_0$ is unique).  Then  we have
\[\mu_A(\mm(d,\chi))~\equiv~\bl^{3d+1+2\chi_0}\cdot \mu_A(\mh^{\bar{d}}),~~~mod~(A_{d^2-\rho_d}),\]
with $\bar{d}=\frac{d(d-3)}2-\chi_0$ and 
\[\rho_d=\left\{\begin{array}{l}d-1,~ for ~d=p~or~2p~with ~p~ prime.\\ 7,~~~~~otherwise.\end{array}\right.\]  

On the scheme level we have
\[\mu_A(M(d,\chi))~\equiv~\bl^{3d+1+2\chi_0}\cdot \mu_A(Hilb^{\bar{d}}(\p^2)),~~~mod~(A_{d^2-\rho_d+1}).\]
\end{thm}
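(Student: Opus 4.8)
The plan is to compute the motivic measure of the moduli stack $\mm(d,\chi)$ by decomposing it according to the singularity structure / support behavior of the sheaves, and isolate the dominant stratum. Since $\mu_A$ is motivic (additive over stratifications, multiplicative over Zariski-locally-trivial fibrations), one begins by normalizing $\chi$: twisting by $\mo(1)$ identifies $\mm(d,\chi)\cong\mm(d,\chi+d)$, and a dualizing involution identifies $\mm(d,\chi)\cong\mm(d,-\chi)$, so we may assume $-\tfrac{3d}2\le\chi_0\le -d$. In this range the expected dimension of the "generic" locus — sheaves supported on a smooth degree-$d$ curve, i.e. line bundles on such curves — is governed by the pair $(\text{choice of curve in }|dH|,\ \text{a degree-shifted Jacobian})$; but for $\chi_0$ near $-\tfrac{3d}2$ the Euler characteristic is so negative that the generic sheaf is forced to be, after the standard "pushing forward from a Hilbert scheme" description, of the form $i_*(I_Z\otimes L)$ where $Z$ is a length-$\bar d$ subscheme; this is exactly the $\p^2$-analogue of Le Potier's / the author's description relating $M(d,\chi)$ to relative Hilbert schemes. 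This explains the appearance of $\mu_A(\mh^{\bar d})$ with $\bar d=\tfrac{d(d-3)}2-\chi_0$, and the twist $\bl^{3d+1+2\chi_0}$ is the motive of the affine-space bundle parametrizing the curve together with the extra linear-algebra data (an affine bundle of the stated rank over $\mh^{\bar d}$, or over $Hilb^{\bar d}(\p^2)$ on the scheme level).

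**Next I would** make this precise via the "universal" morphism. One stratifies $\mm(d,\chi)$ by the minimal degree $d'\le d$ of a curve supporting the sheaf, or equivalently by whether the support is integral; the locus where the support curve is non-integral (reducible or non-reduced) has codimension that one estimates directly and shows is $\ge \rho_d$ — this is precisely where the number-theoretic nature of $d$ enters, since a degree-$d$ curve can fail to be reduced/irreducible in ways controlled by the divisors of $d$, and for $d=p$ or $d=2p$ the cheapest degenerations (a line plus a residual curve, or a double line) contribute codimension exactly $d-1$, whereas for composite $d$ with more divisors one gets a worse (smaller) bound, capped at $7$. On the open stratum where the support is integral of degree $d$, one uses the flat family of compactified Jacobians / relative Hilbert schemes over the (open locus of the) linear system $|dH|\cong\p^{d(d+3)/2}$ and the comparison (for integral planar curves) between the compactified Jacobian in a fixed degree and the relative Hilbert scheme $Hilb^{\bar d}$ of the appropriate length; twisting by a suitable line bundle and pushing forward from $\p^2$ turns the fiberwise object into an ideal sheaf $I_Z$ with $Z$ of length $\bar d$ supported on the curve, and the residual data (the choice of curve through $Z$, modulo the sections already accounted for) forms a Zariski-locally-trivial affine bundle, giving the factor $\bl^{3d+1+2\chi_0}$ over $\mh^{\bar d}$ up to the error term $A_{d^2-\rho_d}$.

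**The scheme-level statement** follows by the same stratification, using that $M(d,\chi)$ is a coarse space: over the stable (hence, when $\gcd(d,\chi)=1$, the whole) locus the stack is a $\mathbb{G}_m$-gerbe over the scheme, contributing the extra $(\bl-1)$ — equivalently, the dimension shift by $1$ — which is why the modulus becomes $A_{d^2-\rho_d+1}$; on the strictly-semistable locus, which occurs in positive codimension bounded by the same estimates, one argues separately that its contribution is absorbed into the error. One also has to track that $\mu_A(Hilb^{\bar d}(\p^2))$ and $\mu_A(\mh^{\bar d})$ differ exactly by the gerbe factor on the relevant piece, so the two displayed congruences are consistent.

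**The main obstacle** I expect is the codimension estimate for the non-integral-support locus, i.e. proving that everything \emph{not} of the stated generic form lives in $A_{d^2-\rho_d}$ (or $A_{d^2-\rho_d+1}$ on the scheme level) and exhibiting that the bound is sharp with the stated $\rho_d$. This requires a careful case analysis of how a degree-$d$ plane curve degenerates — enumerating the decomposition types $d=\sum m_i d_i$ (components of degree $d_i$ with multiplicity $m_i$), computing for each the dimension of the corresponding locus in $\mm(d,\chi)$ (moduli of the curve $\times$ moduli of a semistable sheaf on it, adjusted by the JH-filtration data), and optimizing over all types. The prime and twice-prime cases are special because the partition lattice of $d$ is minimal there, forcing the cheapest degeneration to cost $d-1$; for all other $d$ one must check that no degeneration is cheaper than $7$, and that $7$ is actually attained. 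Handling the boundary of the linear system (where curves acquire singularities but stay integral) and confirming that the compactified-Jacobian–to–Hilbert-scheme comparison extends flatly across it is the secondary technical point, manageable via known results on compactified Jacobians of integral planar curves.
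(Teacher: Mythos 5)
Your central correspondence is the right one and is essentially the paper's: for normalized $\chi_0$ every $F$ sits in a non-split extension $0\to\mo_{\p^2}(-3)\to I_Z(d-3)\to F\to 0$, i.e.\ the dominant stratum is governed by pairs $(Z,C)$ with $Z$ of length $\bar d$ lying on a degree-$d$ curve $C$. One caveat on bookkeeping: this is a roof, not an affine bundle of rank $3d+1+2\chi_0$ over $\mh^{\bar d}$ — the space of such pairs has fibers of dimension $3d+1+\chi_0$ (sections of $I_Z(d)$) over the Hilbert side and of dimension $-\chi_0$ (extensions, dual to $H^1(F)$) over $\mm(d,\chi)$, and $3d+1+2\chi_0$ is the difference; making this precise also forces you to bound the loci where these fiber dimensions jump ($H^0(F)\neq0$, $H^1(F(3))\neq0$, $H^0(I_Z(d-3))\neq0$, $H^1(I_Z(d))\neq0$), which the paper does in Proposition \ref{dnki}, Remark \ref{duck}, Lemmas \ref{inch} and \ref{ddv} and which your ``error term'' leaves unaddressed, though it is of the same difficulty class as the rest.

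The genuine gap is in the codimension estimate for the non-generic locus, and specifically your account of where $\rho_d=7$ comes from is wrong and would not produce a proof. The reducible-support locus has codimension $\ge d-1$ for \emph{every} $d$ (Lemma \ref{codr}); the number-theoretic condition on $d$ enters only through non-reduced supports $\frac dk C$, and there your proposed method — ``moduli of the curve times moduli of a semistable sheaf on it, optimized over decomposition types $d=\sum m_id_i$'' — is precisely the part for which no direct dimension count is available: for multiplicity $m\ge 3$ on a curve of degree $\ge 3$ nobody knows how to bound the stack of pure sheaves on the thickening well enough to get $d-1$ (the paper explicitly states it expects but cannot prove this). The bound $7$ is not the cost of a cheapest degeneration for composite $d$, and it is not claimed to be attained; it is $3^2-2$, coming from Le Potier's estimate (Proposition \ref{ttc}) that the locus $\mt_3$ of sheaves with fiber dimension $\ge 3$ at some point has codimension $\ge 7$, combined with the long appendix argument (Theorem \ref{tt}) that sheaves with non-reduced support and fiber dimension $\le 2$ everywhere form a locus of codimension $\ge d-1$ — proved via the canonical upper and lower filtrations by $\mo_C$-modules (Propositions \ref{gck} and \ref{bcd}) and a rank-by-rank analysis, with the cases $k=1,2$ and multiplicity $2$ handled separately (Proposition \ref{ckot}, Proposition \ref{coha}, Lemma \ref{rko}). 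The reason $d=p,2p$ gives $d-1$ is that then the only possible multiplicities are $d$, $p$ or $2$, exactly the cases tractable by hand, not that the partition lattice makes the cheapest degeneration cost $d-1$. Without an input of this kind (a fiber-dimension stratification à la Le Potier, or a new bound for sheaves on multiple curves), your step ``optimize over all types'' cannot be carried out, so the key quantitative claim of the theorem — the modulus $A_{d^2-\rho_d}$ with the stated $\rho_d$ — is not reached by the proposal.
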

We then have three corollaries as follows.
\begin{coro}[Corollary \ref{bet}]Let $b_i(-)$ and $h^{p,q}(-)$ be the $i$-th Betti number and Hodge number with index $(p,q)$ respectively.  Then for any $d>0$ and $\chi$ coprime to $d$, if $i$ and $p+q$ are both no less than $1+2(d^2+1-\rho_d)$, we  then have

(1) $b_{i}(M(d,\chi))=0$ for $i$ odd.

(2) $h^{p,p}(M(d,\chi))=b_{2p}(M(d,\chi))=b_{2p-2(3d+1+2\chi_0)}(Hilb^{\bar{d}}(\p^2))$. 

(3) $h^{p,q}=0$ for $p\neq q$.
\end{coro}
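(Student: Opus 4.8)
The plan is to deduce this corollary from Theorem \ref{main} by standard properties of a suitable motivic measure, namely the one valued in a Grothendieck ring that remembers Hodge structures (e.g. $A = K_0(\mathrm{MHS})$ or the $E$-polynomial ring), for which $\mu_A$ is additive, multiplicative, and — crucially, when $d$ is coprime to $\chi$ — the stack $\mm(d,\chi)$ is a $\mathbb{G}_m$-gerbe over the smooth projective fine moduli scheme $M(d,\chi)$, so that $\mu_A(\mm(d,\chi)) = \mu_A(M(d,\chi))/\mu_A(\mathbb{G}_m)$ and $M(d,\chi)$ being smooth projective means its class is a genuine polynomial in the Hodge/Lefschetz variables with no cancellation. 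First I would pass from the stack-level congruence to the scheme-level congruence $\mu_A(M(d,\chi)) \equiv \bl^{3d+1+2\chi_0}\cdot \mu_A(Hilb^{\bar d}(\p^2)) \bmod A_{d^2-\rho_d+1}$, which is the second displayed formula in Theorem \ref{main}; here I use that $\dim M(d,\chi) = d^2+1$, so the error term lives in codimension $\geq \rho_d - 1 = D$ in a $(d^2+1)$-dimensional smooth projective variety.

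The key step is then a weight/codimension argument: for a smooth projective variety $X$ of dimension $N$, Poincaré duality and the Hodge structure on cohomology say that the top $2m+1$ graded pieces of $\mu_A(X)$ (those of "codimension $\leq m$", i.e. the coefficients of $\bl^{N}, \bl^{N-1}\bar{\bl}^{?},\dots$ accounting for the top $b_{2N}, b_{2N-1},\dots, b_{2N-2m}$) are determined by, and determine, the bottom $2m+1$ cohomology groups $H^0,\dots,H^{2m}$ of $X$. Working modulo $A_{d^2-\rho_d+1}$ kills everything of dimension $\leq d^2-\rho_d+1$, i.e. everything of codimension $\geq \rho_d-1$ in $M(d,\chi)$, equivalently it retains exactly the information of $H^j(M(d,\chi))$ for $j \geq 2(d^2+1) - 2(\rho_d-1) - ? $; matching indices gives the stated threshold $i, p+q \geq 1 + 2(d^2+1-\rho_d)$. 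The same identification applied to $Hilb^{\bar d}(\p^2)$ — which is smooth projective of dimension $2\bar d$, with known cohomology that is of pure Hodge–Tate type (Göttsche's formula), so $b_{\text{odd}} = 0$ and $h^{p,q} = 0$ for $p \neq q$ — transports these three properties across the congruence. Multiplying by $\bl^{3d+1+2\chi_0}$ merely shifts degrees by $2(3d+1+2\chi_0)$, which accounts for the index shift in part (2), $b_{2p}(M(d,\chi)) = b_{2p - 2(3d+1+2\chi_0)}(Hilb^{\bar d}(\p^2))$, and one checks $\bar d = \frac{d(d-3)}{2} - \chi_0$ together with $2\bar d + 2(3d+1+2\chi_0) = d^2+1$ makes the degree ranges line up so that the retained range on both sides corresponds under this shift.

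Concretely, the steps in order are: (i) fix $\mu_A$ to be the Hodge-theoretic motivic measure and record its additivity, multiplicativity, and the gerbe relation $\mu_A(\mm(d,\chi)) = \mu_A(M(d,\chi))/(\bl-1)$ when $\gcd(d,\chi)=1$; (ii) recall that both $M(d,\chi)$ (smooth projective by coprimality and Le Potier's results) and $Hilb^{\bar d}(\p^2)$ are smooth projective, with $Hilb^{\bar d}(\p^2)$ having Hodge–Tate cohomology; (iii) prove a lemma that for a smooth projective $N$-fold, the image of $\mu_A(X)$ in $A/A_{N-k}$ is a faithful encoding of $\bigoplus_{j \leq 2k} H^j(X)$ as Hodge structures, via hard Lefschetz / Poincaré duality; (iv) apply (iii) to both sides of the scheme-level congruence of Theorem \ref{main}, with $N = d^2+1$ on the left and $N = 2\bar d$ on the right (after the $\bl$-shift), and read off vanishing of odd Betti numbers, Hodge–Tate-ness, and the explicit equality of even Betti numbers in the stated range. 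The main obstacle I anticipate is step (iii) — pinning down exactly which graded pieces of the motivic class survive modulo $A_{d^2-\rho_d+1}$ and checking that this cut-off corresponds precisely (no off-by-two errors in the Tate twist bookkeeping) to the cohomological range $i \geq 1 + 2(d^2+1-\rho_d)$ on both sides of the congruence, including verifying that the shift by $\bl^{3d+1+2\chi_0}$ maps the surviving range for $Hilb^{\bar d}(\p^2)$ onto the surviving range for $M(d,\chi)$; the rest is formal.
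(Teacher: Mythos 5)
Your proposal is correct and is essentially the paper's (implicit) argument: the paper states Corollary \ref{bet} without proof as an immediate consequence of the scheme-level congruence in Theorem \ref{main}, and your route — specializing $\mu_A$ to the Hodge--Deligne/E-polynomial measure, noting that classes in $A_{d^2+1-\rho_d}$ have Hodge degrees $p,q\leq d^2+1-\rho_d$ so all coefficients with $p+q\geq 1+2(d^2+1-\rho_d)$ are preserved, using smooth projectivity of $M(d,\chi)$ in the coprime case and the purely even, Hodge--Tate cohomology of $Hilb^{\bar{d}}(\p^2)$, with the shift by $\bl^{3d+1+2\chi_0}$ — is exactly the intended one. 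Only a typo-level slip: the degree-matching identity should be $2\bar{d}+(3d+1+2\chi_0)=d^2+1$ (equivalently $4\bar{d}+2(3d+1+2\chi_0)=2(d^2+1)$ in real cohomological degree), not $2\bar{d}+2(3d+1+2\chi_0)=d^2+1$; this does not affect the argument.
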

\begin{coro}[Corollary \ref{disk}]For any $d>0$ and $\chi_1,\chi_1$, we have
\[\mu_A(\mm(d,\chi_1))~\equiv~ \mu_A(\mm(d,\chi_2)),~~~mod~(A_{d^2-\rho_d}).\]
In particular, if $\chi_i$ are coprime to $d$ for $i=1,2$, then we have
 \[\mu_A(M(d,\chi_1))~\equiv~ \mu_A(M(d,\chi_2)),~~~mod~(A_{d^2+1-\rho_d}).\]
\end{coro}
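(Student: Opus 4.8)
The plan is to read Corollary~\ref{disk} off Theorem~\ref{main} by checking that the right-hand side $\bl^{3d+1+2\chi_0}\cdot\mu_A(\mh^{\bar d})$, with $\bar d=\frac{d(d-3)}2-\chi_0$, is the same modulo $A_{d^2-\rho_d}$ for every $\chi_0$ in the window $[-\tfrac{3d}2,-d]$. Fix $d$, and given $\chi_1,\chi_2$ let $\chi_0^{(i)}$, $\bar d_i$ be the associated data and $a_i:=3d+1+2\chi_0^{(i)}$, so $a_i+2\bar d_i=d^2+1$ for $i=1,2$. By Theorem~\ref{main} the stack statement reduces to
\[\bl^{a_1}\cdot\mu_A(\mh^{\bar d_1})~\equiv~\bl^{a_2}\cdot\mu_A(\mh^{\bar d_2})\pmod{A_{d^2-\rho_d}}.\]
Because an ideal sheaf is simple, $\mh^n$ is a trivial $\mathbb{G}_m$-gerbe over the smooth projective variety $Hilb^n(\p^2)$, so $\mu_A(\mh^n)=\mu_A(Hilb^n(\p^2))\cdot\mu_A(B\mathbb{G}_m)$ with $\mu_A(B\mathbb{G}_m)\in A_{-1}$; factoring out this common term, it suffices to prove that
\[\bl^{a_1}\cdot\mu_A(Hilb^{\bar d_1}(\p^2))~-~\bl^{a_2}\cdot\mu_A(Hilb^{\bar d_2}(\p^2)),\]
which is a priori a polynomial in $\bl$ of degree $\le d^2+1$, in fact has degree $\le d^2+1-\rho_d$, and hence lies in $A_{d^2+1-\rho_d}$.

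Since $Hilb^n(\p^2)$ admits a cell decomposition, $\mu_A(Hilb^n(\p^2))=\sum_{j=0}^{2n}b_{2j}(Hilb^n(\p^2))\,\bl^j$, and by Poincar\'e duality $b_{4n-2j}=b_{2j}$; thus the coefficient of $\bl^{d^2+1-j}$ in $\bl^{a_i}\cdot\mu_A(Hilb^{\bar d_i}(\p^2))$ equals $b_{2j}(Hilb^{\bar d_i}(\p^2))$ (using $d^2+1-a_i=2\bar d_i$). Now G\"ottsche's formula
\[\sum_{n\ge0}\mu_A(Hilb^n(\p^2))\,q^n~=~\prod_{m\ge1}\frac1{(1-\bl^{m-1}q^m)(1-\bl^mq^m)(1-\bl^{m+1}q^m)}\]
shows that $b_{2j}(Hilb^n(\p^2))$ is independent of $n$ once $n\ge2j$: among the monomials $\bl^aq^m$ with $a\ge1$ occurring in the factors, the ratio $m/a$ is maximal, equal to $2$, for $\bl q^2$, so the coefficient of $\bl^jq^n$ stabilizes as soon as $n\ge2j$, the surplus $q$-degree being absorbed by the free factor $(1-q)^{-1}$. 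Since $\bar d_i=\frac{d(d-3)}2-\chi_0^{(i)}\ge\frac{d(d-1)}2$, we get $b_{2j}(Hilb^{\bar d_1}(\p^2))=b_{2j}(Hilb^{\bar d_2}(\p^2))$ for all $j\le\frac{d(d-1)}4$; and $\frac{d(d-1)}2\ge2(\rho_d-1)$ for every $d\ge2$ (an elementary inequality, using $\rho_d\le\max(d-1,7)$), so the top $\rho_d$ coefficients of the two classes above coincide and their difference has degree $\le d^2+1-\rho_d$. Multiplying back by $\mu_A(B\mathbb{G}_m)\in A_{-1}$ puts the original difference into $A_{d^2-\rho_d}$, proving the stack congruence; for $d=1$ there is nothing to prove, as $M(1,\chi)\cong\p^2$ for all $\chi$.

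Finally, when $\chi_i$ is coprime to $d$ there are no strictly semistable sheaves, so $\mm(d,\chi_i)\cong M(d,\chi_i)\times B\mathbb{G}_m$; multiplying the stack congruence through by $\mu_A(\mathbb{G}_m)=\bl-1\in A_1$ then gives $\mu_A(M(d,\chi_1))\equiv\mu_A(M(d,\chi_2))\pmod{A_{d^2+1-\rho_d}}$, and equivalently one may just rerun the two displayed steps with $Hilb^n(\p^2)$ in place of $\mh^n$, starting from the scheme-level half of Theorem~\ref{main}. The one substantive ingredient is the stabilization of the Betti numbers of $Hilb^n(\p^2)$ used in the middle paragraph (a consequence of G\"ottsche's formula, or of cohomological stability for Hilbert schemes of points on a surface together with Poincar\'e duality to pass from low to top degree); everything else is the dimension bookkeeping packaged in the identity $2\bar d+3d+1+2\chi_0=d^2+1$ and the inequality $\frac{d(d-1)}2\ge2(\rho_d-1)$, and I expect the only care to be needed at the small values $d=2,3$, where that inequality is tight.
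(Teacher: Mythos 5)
Your proposal is correct, but it proves the corollary by a genuinely different route from the paper. The paper also reduces, via Theorem \ref{main} and Remark \ref{zero}, to the congruence (\ref{hi}) between the classes $\bl^{3d+1+2\chi_i}\mu_A(\mh^{\bar d_i})$ for $\chi_i$ in the window $[-2d-1,-d+1]$, but it then finishes in one line: it asserts that it suffices to check the extreme pair $(\chi_1,\chi_2)=(-2d-1,-d+1)$, and that case follows from the isomorphism $M(d,-2d-1)\cong M(d,-d+1)$ (duality plus twist, since $-2d-1\equiv-1$ and $-d+1\equiv+1$ mod $d$); no property of Hilbert schemes is invoked at all. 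You instead prove (\ref{hi}) for every pair directly: you write $\mu_A(Hilb^n(\p^2))$ as its Poincar\'e polynomial in $\bl$ (cell decomposition), convert the top coefficients into low-degree Betti numbers by Poincar\'e duality using $a_i+2\bar d_i=d^2+1$, and invoke the stabilization $b_{2j}(Hilb^n(\p^2))$ for $n\ge 2j$ read off from G\"ottsche's formula, plus the inequality $\frac{d(d-1)}2\ge 2(\rho_d-1)$. What your route buys is that it treats pairs $\chi_1,\chi_2$ lying in different residue classes of $\pm\chi$ mod $d$ on an equal footing, which is exactly the nontrivial content of the corollary: the isomorphism used for the paper's extreme pair only compares measures with $\chi_1\equiv\pm\chi_2$ mod $d$, and the paper's reduction of an arbitrary pair to that extreme pair is left unexplained, so your stabilization argument supplies precisely the comparison of $\mu_A(\mh^{\bar d_1})$ and $\mu_A(\mh^{\bar d_2})$ for $\bar d_1\neq\bar d_2$ that the short proof glosses over; the cost is the extra input (G\"ottsche/Ellingsrud--Str\o mme), which the paper never uses. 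Two minor points: your parenthetical justification of $\frac{d(d-1)}2\ge2(\rho_d-1)$ via $\rho_d\le\max(d-1,7)$ does not cover $4\le d\le7$, where one must use the actual value $\rho_d=d-1$ (every such $d$ is $p$ or $2p$) --- the inequality itself is true for all $d\ge2$, with $d=1$ handled separately as you do; and for the scheme-level statement the cleanest phrasing is, as you note, to rerun the coefficient comparison against the scheme-level half of Theorem \ref{main}, which avoids any discussion of the triviality of the gerbe.
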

\begin{coro}[Corollary \ref{star}]For $d>0$ and $\chi$ coprime to $d$, $M(d,\chi)$ is stably rational.
\end{coro}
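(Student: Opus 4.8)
The plan is to exhibit $M(d,\chi)$, after taking a product with a projective space, as birational to a rational variety; the geometric input is exactly the codimension--zero part of the stratification behind Theorem \ref{main}, namely the description of a generic $1$--dimensional sheaf as a line bundle on a plane curve. First I would reduce to the standard range: tensoring with $\mo_{\p^2}(1)$ gives $M(d,\chi)\cong M(d,\chi+d)$, and the duality $F\mapsto\mathcal{E}xt^1_{\p^2}(F,\omega_{\p^2})$ gives $M(d,\chi)\cong M(d,-\chi)$, so we may assume $\chi=\chi_0$ with $-\tfrac{3d}2\le\chi_0\le-d$. Since $\gcd(d,\chi)=1$, $M(d,\chi)$ is smooth, projective and irreducible of dimension $d^2+1$; writing $g=\binom{d-1}{2}$ and $\delta:=\chi_0+g-1$, the family of pairs (smooth plane curve $C$ of degree $d$, line bundle of degree $\delta$ on $C$) is irreducible of dimension $\tfrac{d(d+3)}2+g=d^2+1$ and maps birationally onto a dense open of $M(d,\chi)$. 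Putting $\bar d=\tfrac{d(d-3)}2-\chi_0=2g-2-\delta$, a generic $F$ may be written $F\cong\mo_C(d-3)\otimes\mo_C(-Z)$ with $Z$ a generic effective divisor of degree $\bar d$ on $C$, determined by $F$ only up to linear equivalence of $Z$ since $\mo_C(d-3)|_C=\omega_C$ carries no moduli.

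Next, let $\ms$ denote the main component of the relative Hilbert scheme of $\bar d$ points on the universal degree--$d$ curve $\mc\to|dH|=\p^{d(d+3)/2}$, and consider the two rational maps $p\colon\ms\dashrightarrow M(d,\chi)$, $(C,Z)\mapsto\mo_C(d-3)(-Z)$, and $q\colon\ms\dashrightarrow Hilb^{\bar d}(\p^2)$, $(C,Z)\mapsto Z$. Over a dense open of $M(d,\chi)$ the fibre of $p$ over $F$ is $\p\bigl(H^0(C_F,\mo_{C_F}(d-3)\otimes F^{\vee})\bigr)$; Serre duality on $C_F$ identifies $h^1(\mo_{C_F}(d-3)\otimes F^{\vee})$ with $h^0(F)$, which vanishes for a general line bundle of degree $\delta<g$, so $p$ is, birationally, a Zariski--locally trivial $\p^{-\chi_0-1}$--bundle. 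Over a dense open of $Hilb^{\bar d}(\p^2)$ the fibre of $q$ over $Z$ is $\p\bigl(H^0(\p^2,\mathcal{I}_Z(d))\bigr)\cong\p^{3d+\chi_0}$, since a general length--$\bar d$ subscheme imposes independent conditions on plane curves of degree $d$ (one checks $\bar d<\binom{d+2}{2}$), so $q$ is, birationally, a Zariski--locally trivial $\p^{3d+\chi_0}$--bundle.

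A Zariski--locally trivial $\p^r$--bundle over a variety $X$ is birational to $X\times\p^r$, because over $\mathrm{Spec}\,k(X)$ it is the projectivization of a $k(X)$--vector space and hence is $\p^r_{k(X)}$. Consequently
\[M(d,\chi)\times\p^{-\chi_0-1}~\sim~\ms~\sim~Hilb^{\bar d}(\p^2)\times\p^{3d+\chi_0}.\]
As $Hilb^{\bar d}(\p^2)$ is rational (the Hilbert scheme of points on a rational surface is rational), the right--hand side is rational, hence so is $M(d,\chi)\times\p^{-\chi_0-1}$; that is, $M(d,\chi)$ is stably rational. For $d\ge2$ one has $-\chi_0-1\ge d-1\ge1$, so $p$ has positive--dimensional fibres and this argument yields only stable rationality, consistent with the statement of the corollary and, at the level of leading terms, with the factor $\bl^{3d+1+2\chi_0}$ of Theorem \ref{main} (the exponent being $\dim\p^{3d+\chi_0}-\dim\p^{-\chi_0-1}$).

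The step I expect to be the real obstacle is upgrading $p$ and $q$ from rational maps with projective--space generic fibre to honest Zariski--locally trivial projective bundles over genuine dense open subschemes, i.e.\ the cohomology--and--base--change bookkeeping: constancy of the relevant $h^0$ and vanishing of the relevant $h^1$ for the general member (which rests on the general point of $M(d,\chi)$ actually being a line bundle on a smooth curve --- hence on irreducibility of $M(d,\chi)$ and the dimension count --- and on general points of $\p^2$ imposing independent conditions on degree--$d$ forms). All of this is standard or already available from the construction used to prove Theorem \ref{main}, so the write-up of the corollary reduces to invoking that construction and appending the elementary remarks above together with the rationality of $Hilb^{\bar d}(\p^2)$.
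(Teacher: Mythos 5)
Your construction is essentially the paper's own proof: your roof $\ms$ of pairs $(C,Z)$ is birationally exactly the paper's $\mathbb{P}(\mathcal{H}om_p(\mo(-3),\mathcal{I}_{\bar{d}}))\cong\mathbb{P}(\mathcal{E}xt^1_p(\mf,\mo_{\p^2}(-3)))$, a projective bundle over $Hilb^{\bar{d}}(\p^2)$ on one side and over an open subset of $M(d,\chi)$ on the other (your Serre-duality description of the $p$-fibres is the fibrewise form of the relative Ext sheaf), yielding $M(d,\chi)\times\p^{-\chi_0-1}$ birational to the rational variety $Hilb^{\bar{d}}(\p^2)\times\p^{3d+\chi_0}$. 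The step you flag as the real obstacle, Zariski-local triviality of $p$ over $M(d,\chi)$, is settled in the paper precisely by the existence of a universal sheaf $\mf$ (Le Potier, Theorem 3.19), which is the only place the coprimality of $d$ and $\chi$ enters.
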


This is our strategy: choose $\chi<0$, then every 1-dimensional sheaf $F$ with Euler characteristic $\chi$ first Chern class $dH$ can be written into the following exact sequence.
\begin{equation}\label{too}0\ra \mo_{\p^2}(-3)\ra \widetilde{I}\ra F\ra0.\end{equation}
If $\widetilde{I}$ is torsion free, then $\widetilde{I}\cong I_{\frac{d(d-3)}2-\chi}(d-3)$ with $I_{\frac{d(d-3)}2-\chi}$ an ideal sheaf of colength $\frac{d(d-3)}2-\chi$, then we get an element in $Hilb^{[\frac{d(d-3)}2-\chi]}(\p^2)$.  However, if $Supp(F)$ is not integral, $\widetilde{I}$ can contain torsion.  Also on the other hand, $F$ in (\ref{too}) with $\widetilde{I}$ torsion free is not necessarily (semi)stable.  Hence we need to estimate codimension of some subschemes (or substacks) in both $M(d,\chi)$ ($\mm(d,\chi)$) and $Hilb^{[\frac{d(d-3)}2-\chi]}(\p^2)$ ($\mh^{\frac{d(d-3)}2-\chi}$).  

The structure of the paper is as follows.  In Section 2, we define some stacks and do the codimension estimate for some relatively easier cases, such as the substack parametrizing sheaves with reducible supports.  Section 3 is the most difficult and complicated part of the paper, where we study the sheaves with support $nC$ for some integral curve $C$ and estimate the codimension of the substack parametrizing those sheaves.  In Section 4, we prove Theorem \ref{main} and some corollaries.  In the end, there is the appendix where we give a whole proof of an important theorem (Theorem \ref{tt}) in Section 3.

\emph{Notations.}  (1) Usually we have $d$ and $\chi$ as integers.  For a sheaf $F$, we denote by $c_1(F)$ the first Chern class of $F$.  $d(F)$ is defined to be the number such that $c_1(F)=d(F)H$, and finally we denote by $\chi(F)$ the Euler characteristic of $F$.

(2) Let $C$ be a curve on a surface $X$.  Let $F$ be a sheaf over $X$.  Then $F(\pm C):=F\otimes\mo_X(\pm C)$.  If moreover $X=\p^2$, $F(n):=F\otimes\mo_{\p^2}(n)$ for any $n\in\mathbb{Z}$.

(3) For two sheaves $F_1,~F_2$ over $X$, $\chi(F_2,F_1):=\sum_{i} (-1)^i dim~\text{Ext}^i(F_2,F_1).$

\textbf{Acknowledgements.}  I was supported by NSFC grant 11301292.  I thank Yi Hu for some helpful discussions.  I also thank Shenghao Sun for the help on stack theory.

%%%%%%%%%%%%%%%%%%%%%%%%%%%%%%%%%%%%%%%%%%%%%%%%%%%%%%%%%%%%%      
\section{Some stacks and codimension estimate.}%%%%%%        Section 2        %%%%%%%%%%%
We are always on $\p^2$ except otherwise stated.  Let $H$ be the hyperplane class on $\p^2$. 
\begin{defn}\label{ff}Given three integers $d>0$, $\chi$ and $a$, 
let $\mm_{\bullet}^a(d,\chi)$ be the (Artin) stack parametrizing sheaves $F$ on $\p^2$ with rank 0, $c_1(F)=dH$, $\chi(F)=\chi$ and satisfying either of the following two conditions.
 
($C_1$) $\forall F'\subset F$, $\chi(F')\leq a$;

($C_2$) $F$ is semistable.
\end{defn}
\begin{defn}Let $\mm(d,\chi)$ be the substack of $\mm_{\bullet}^a(d,\chi)$ parametrizing stable sheaves in $\mm_{\bullet}^a(d,\chi)$.
\end{defn}
\begin{rem}\label{con}(1) In Definition \ref{ff}, if $a\geq\chi>0$, ($C_2$) implies ($C_1$).  But we put ($C_1$) and ($C_2$) together for larger generality.

(2) $\mm(d,\chi)$ has a (coarse) moduli space $M(d,\chi)$.  $M(d,\chi)$ is a fine moduli space if $d$ and $\chi$ are coprime.  We know that $M(d,\chi)$ is irreducible of dimension $d^2+1$ (e.g. see Remark 4.2.10 in \cite{yuan}), hence $\mm(d,\chi)$ is of dimension $d^2$.  
\end{rem}

It is easy to see the boundedness of $\mm^a_{\bullet}(d,\chi)$.  Let $\ts^a(d,\chi):=\mm_{\bullet}^a(d,\chi)-\mm(d,\chi)$.

\begin{prop}\label{dlt}$\ts^a(d,\chi)$ is of codimension $\geq d-1$ in $\mm_{\bullet}^a(d,\chi)$.
\end{prop}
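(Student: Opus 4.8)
The plan is to parametrize the locus $\ts^a(d,\chi)$ of non-stable (or destabilized) sheaves by the Jordan–Hölder-type data of their proper subsheaves and bound the dimension of each stratum. A sheaf $F$ in $\mm_\bullet^a(d,\chi)$ fails to lie in $\mm(d,\chi)$ precisely when it admits a proper subsheaf $F'\subset F$ with $\chi(F')/d(F')\geq \chi/d$ (and not strictly smaller slope in a way that would violate $(C_1)$ or $(C_2)$); in particular there is a short exact sequence $0\to F'\to F\to F''\to 0$ with $d(F')=d'$, $1\leq d'\leq d-1$, $d(F'')=d-d'=:d''$, and prescribed Euler characteristics $\chi'$, $\chi''=\chi-\chi'$ determined up to finitely many choices by the (semi)stability borderline. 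So I would stratify $\ts^a(d,\chi)$ by the quadruple $(d',\chi',d'',\chi'')$, which ranges over a finite set, and estimate the dimension of the stratum $\ms_{d',\chi',d'',\chi''}$ consisting of sheaves fitting into such an extension with $F'$, $F''$ themselves semistable of the indicated invariants.

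The key dimension count is as follows. The stratum maps to $\mm_\bullet(d',\chi')\times\mm_\bullet(d'',\chi'')$ (finite-type stacks of dimensions $d'^2$ and $d''^2$ by Remark~\ref{con}) with fibers given by the space of extensions $\mathrm{Ext}^1(F'',F')$ modulo $\mathrm{Hom}(F'',F')$ (the latter acting through automorphisms/the stacky structure), so the fiber dimension is $\dim\mathrm{Ext}^1(F'',F') - \dim\mathrm{Hom}(F'',F') = -\chi(F'',F')$. For 1-dimensional sheaves on $\p^2$ one computes $\chi(F'',F') = -d'd''\cdot(\text{something})$; concretely, using Riemann–Roch on $\p^2$ one gets $\chi(F'',F')= d' \chi'' - d'' \chi' - d'd''\cdot 3$ up to the standard normalization, and in any case $\chi(F'',F')$ is a fixed number depending only on the stratum. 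Adding up, $\dim \ms_{d',\chi',d'',\chi''} \leq d'^2 + d''^2 - \chi(F'',F')$, and I then want to check that this is $\leq d^2-(d-1)$ for every admissible $(d',d'')$ with $d'+d''=d$, $1\le d'\le d-1$. Since $d^2 = d'^2 + 2d'd'' + d''^2$, this reduces to showing $2d'd'' + \chi(F'',F') \geq d-1$, i.e. an inequality purely in $d',d''$ and the borderline Euler characteristics; the worst case is $d'=1$ or $d'=d-1$, and there one checks it directly.

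I expect the main obstacle to be the bookkeeping at the stack level: getting the fiber dimension exactly right requires care about whether one quotients by $\mathrm{Hom}$, by $\mathrm{Aut}$, or by a torus, and about the difference in dimension between $\mm_\bullet$ and its stable locus (the $\mathbb{G}_m$-gerbe issue), so the naive $-\chi(F'',F')$ could be off by small constants that matter when $d-1$ is the target. A second subtlety is that $(C_1)$ and $(C_2)$ pick out slightly different non-stable loci, so I would handle the semistable-but-not-stable case (where $F'$ has the same reduced slope as $F$, so $\chi(F'',F')$ takes its extremal value) separately from the genuinely unstable case, and also allow $F'$ or $F''$ to be built from more than two Jordan–Hölder factors by induction on $d$. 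Finally, I would remark that the bound $d-1$ is sharp — it is achieved by the stratum with $d'=1$ — which is consistent with the role of $\rho_d=d-1$ in the main theorem for $d$ prime.
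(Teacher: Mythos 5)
Your route is essentially the paper's: induct on $d$, write every $F\in\ts^a(d,\chi)$ as an extension $0\ra F_1\ra F\ra F_2\ra 0$ with only finitely many numerical types $((d_1,\chi_1),(d_2,\chi_2))$, bound each stratum by $\dim\mm_{\bullet}^{a_1}(d_1,\chi_1)+\dim\mm_{\bullet}^{a_2}(d_2,\chi_2)$ plus the stacky extension dimension $\dim\text{Ext}^1(F_2,F_1)-\dim\text{Hom}(F_2,F_1)$, and then verify a numerical inequality. Two points in your write-up, however, would not survive being carried out as stated. First, your Riemann--Roch value is wrong: for two pure $1$-dimensional sheaves on $\p^2$ with $c_1=d_1H$ and $d_2H$ one has $\chi(F_2,F_1)=-d_1d_2$, independent of the Euler characteristics, because both Chern characters have rank $0$ and only the $c_1\cdot c_1$ term survives in the Euler pairing. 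With the formula you wrote ($d_1\chi_2-d_2\chi_1-3d_1d_2$), your reduction $2d_1d_2+\chi(F_2,F_1)\geq d-1$ becomes $d_1\chi_2-d_2\chi_1-d_1d_2\geq d-1$, which fails exactly on the destabilizing strata (there $d_1\chi_2-d_2\chi_1\leq 0$), so the ``direct check in the worst case'' you defer would not check out. With the correct value the count closes at once, as in the paper: each stratum has dimension at most $d_1^2+d_2^2+d_1d_2=d^2-d_1d_2\leq d^2-(d-1)$.

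Second, the identity you use for the fiber dimension, $\dim\text{Ext}^1(F_2,F_1)-\dim\text{Hom}(F_2,F_1)=-\chi(F_2,F_1)$, requires $\text{Ext}^2(F_2,F_1)=0$; otherwise the left side equals $-\chi(F_2,F_1)+\dim\text{Ext}^2(F_2,F_1)$ and your upper bound degrades. The paper arranges the destabilizing sequence so that this vanishing holds, and it is easy to secure: by Serre duality $\text{Ext}^2(F_2,F_1)\cong\text{Hom}(F_1,F_2(-3))^{\vee}$, and if $F_2$ is taken to be the minimal-slope (semistable) Harder--Narasimhan quotient, every pure quotient of $F_1$ has slope $\geq \chi_2/d_2>\chi_2/d_2-3$ while every subsheaf of $F_2(-3)$ has slope $\leq \chi_2/d_2-3$, so the Hom vanishes. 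Your stratification with semistable factors plus induction on $d$ for longer filtrations is compatible with this, but the vanishing must be stated, since it is what makes $d_1d_2$ the exact fiber dimension rather than only a lower bound. One last small point: the dimensions $d_1^2$, $d_2^2$ for $\mm_{\bullet}^{a_i}(d_i,\chi_i)$ come from the inductive hypothesis of the proposition itself (Remark \ref{con} only gives $\dim\mm(d_i,\chi_i)=d_i^2$ for the stable locus), which is how the paper phrases the induction; with that adjustment your argument coincides with the paper's proof.
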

\begin{proof}We prove the lemma by induction on $d$.  If $d=1$, then $\ts^a(d,\chi)=\emptyset$ and there is nothing to prove.

Let $d\geq 1$.  Let $F\in\ts^a(d,\chi)$, then $F$ is strictly semistable or unstable.  Hence we can have the following sequence
\begin{equation}0\ra F_1\ra F\ra F_2\ra0,
\end{equation}
with $F_i\in\mm_{\bullet}^{a_i}(d_i,\chi_i)$ for $i=1,2$, \large{$\frac{\chi_2}{d_2}\leq\frac{\chi_1}{d_1}\leq\frac{a}{d_1}$}, \normalsize and $\text{Ext}^2(F_2,F_1)=0$.  Hence there are finitely many possible choices for $((d_1,\chi_1),(d_2,\chi_2))$,  and we can also find upper bounds for $a_i$ (e.g. $a_1\leq a$ and $a_2\leq (d-1)a$). 

Recall that $\chi(F_2,F_1):=\sum_{i}(-1)^idim~\text{Ext}^i(F_2,F_1)$.
The stack $\mext^1(F_2,F_1)$ has dimension $\leq \chi(F_2,F_1)$, because $\textbf{1}+\text{Hom}(F_2,F_1)$ is contained in the automorphism groups of all elements in $\text{Ext}^1(F_2,F_1)$ as in the following diagram.
\begin{equation}\label{exi}\xymatrix@C=2.5cm{0\ar[r] &F_1\ar[r]
\ar[d]_{Id} 
&F\ar[r]\ar[d]_{\cong}^{\varphi\in\textbf{1}+\text{Hom}(F_2,F_1)} &F_2\ar[r] \ar[d]^{Id}&0 \\
0\ar[r] &F_1\ar[r] &F\ar[r] &F_2\ar[r] &0.}\end{equation}

Hence $dim~\mext^1(F_2,F_1)\leq dim~\text{Ext}^1(F_2,F_1)-dim~\text{Hom}(F_2,F_1)=\chi(F_2,F_1)$ by $\text{Ext}^2(F_2,F_1)=0$. 

By induction assumption we have $dim~\mm_{\bullet}^{a_i}(d_i,\chi_i)=d_i^2$, hence we have $dim~\ts^a(d,\chi)\leq \displaystyle{\max_{d_1+d_2=d}}\{d_1^2+d_2^2+d_1d_2\}=d^2-(d-1)$.  Hence the lemma.\end{proof}

\begin{rem}We only define $\ts^a(d,\chi)$ set-theoretically, but it is enough when talking about codimension.
\end{rem}
\begin{coro}The dimension of $\mm_{\bullet}^a(d,\chi)$ is $d^2$ for all $a$.
\end{coro}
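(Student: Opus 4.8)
The plan is to deduce this directly from Proposition \ref{dlt} together with the geometry of $M(d,\chi)$ already recorded in Remark \ref{con}(2). Set-theoretically $\mm_\bullet^a(d,\chi)$ is the union of the stable locus $\mm(d,\chi)$ and its complement $\ts^a(d,\chi)$. Every stable sheaf is semistable, so it satisfies condition $(C_2)$ of Definition \ref{ff}; hence $\mm(d,\chi)\subseteq\mm_\bullet^a(d,\chi)$ for \emph{every} $a$, which is already the reason the answer will not depend on $a$. By Remark \ref{con}(2) the scheme $M(d,\chi)$ is nonempty and irreducible of dimension $d^2+1$, and the moduli stack of stable sheaves is a $\mathbb{G}_m$-gerbe over its coarse space, so $\dim\mm(d,\chi)=d^2$. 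This gives the lower bound $\dim\mm_\bullet^a(d,\chi)\geq d^2$.

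For the matching upper bound I would invoke the estimate carried out inside the proof of Proposition \ref{dlt}: writing any $F\in\ts^a(d,\chi)$ as an extension $0\to F_1\to F\to F_2\to 0$ with $F_i\in\mm_\bullet^{a_i}(d_i,\chi_i)$, $d_1+d_2=d$ and both $d_i\geq 1$, and $\text{Ext}^2(F_2,F_1)=0$, and bounding $\dim\mext^1(F_2,F_1)\leq\chi(F_2,F_1)$, one obtains $\dim\ts^a(d,\chi)\leq\max_{d_1+d_2=d}\{d_1^2+d_2^2+d_1d_2\}=d^2-(d-1)\leq d^2$. Combining the two bounds, $\dim\mm_\bullet^a(d,\chi)=\max\{\dim\mm(d,\chi),\dim\ts^a(d,\chi)\}=d^2$.

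The main point requiring care is that the estimate borrowed from Proposition \ref{dlt} already calls on $\dim\mm_\bullet^{a_i}(d_i,\chi_i)=d_i^2$ for the smaller degrees $d_i<d$, i.e.\ on this corollary itself; so the honest way to organize everything is a single induction on $d$ proving Proposition \ref{dlt} and this corollary together. The base case $d=1$ is immediate: $\ts^a(1,\chi)=\emptyset$, so $\mm_\bullet^a(1,\chi)=\mm(1,\chi)$, a $\mathbb{G}_m$-gerbe over $(\p^2)^\vee$ and hence of dimension $1=1^2$. In the inductive step the argument above, fed with the statement for all $d'<d$, simultaneously yields the codimension bound of Proposition \ref{dlt} and $\dim\mm_\bullet^a(d,\chi)=d^2$, closing the induction; one should only make sure to note that the nonemptiness of $\mm(d,\chi)$ needed for the lower bound is part of the cited irreducibility statement for $M(d,\chi)$.
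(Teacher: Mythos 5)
Your proposal is correct and follows essentially the same route as the paper: the lower bound $\dim\mm(d,\chi)=d^2$ comes from Remark 2.3(2), the upper bound on the complement comes from the estimate in Proposition 2.4, and the paper's own proof of that proposition already invokes $\dim\mm_{\bullet}^{a_i}(d_i,\chi_i)=d_i^2$ as an induction hypothesis, so the joint induction on $d$ you spell out is exactly the (implicit) logical structure of the paper's argument. Your explicit attention to that circularity is a point in your favour rather than a deviation.
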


Hence we know that for different $a$,  $\mm_{\bullet}^a(d,\chi)$ are birational and isomorphic in codimension $d-2$.  From now on, usually we won't specify the difference between the numbers $a$ in $\mm_{\bullet}^a(d,\chi)$ but only keep in mind there might exist a difference of dimension at most $d^2-d+1$. 

\begin{defn}For two integers $k>0$ and $i$, we define $\mm_{k,i}^a(d,\chi)$ to be the (locally closed) substack of $\mm_{\bullet}^a(d,\chi)$ parametrizing sheaves $F\in\mm_{\bullet}^a(d,\chi)$ with $h^1(F(i)):=dim~H^1(F(i))=k$ and $h^1(F(n))=0,\forall n>i.$ 
\end{defn}
\begin{rem}\label{lfr}According to Lemma 2.2 in \cite{yth}, for every sheaf $F$ pure of dimension 1 on $\p^2$, there is a direct sum of line bundle $E_F$ uniquely determined by $F$, such that we have the following exact sequence.
\begin{equation}\label{lr}0\ra E_F(-1)\ra E_F\ra F\ra0.
\end{equation}
Moreover $c_1(F)=rk(E_F)H$, $c_1(E_F)=(\chi(F)-rk(E_F))H$, with $rk(E_F)$ the rank of $E_F$. 

Let $E_F=\displaystyle{\oplus_{s=0}^m}\mo_{\p^2}(\alpha_s)^{\oplus \beta_s}$ with $\beta_s>0$ and $\alpha_m>\cdots\alpha_{1}>\alpha_0$.  Then one can easily observe $F\in \mm_{k,i}^a(d,\chi)$ for some $a$ $\Leftrightarrow \alpha_0=-i-2, \beta_0=k$.  

We say that $E_F$ is \textbf{connected} or $F$ is connected if $\alpha_{s+1}=\alpha_s+1,\forall~ 0\leq s\leq m-1.$  If $E_F$ is not connected, for instance $\alpha_{s_0}>\alpha_{s_0-1}+1$, then $F$ contains a subsheaf $F'$ such that $E_{F'}=\displaystyle{\oplus_{s=s_0}^m}\mo_{\p^2}(\alpha_s)^{\oplus \beta_s}$ and $E_{F/F'}=\displaystyle{\oplus_{s=0}^{s_0-1}}\mo_{\p^2}(\alpha_s)^{\oplus \beta_s}$.  
\end{rem}
It is easy to see the following proposition.
\begin{prop}\label{bound}For fixed $(d,\chi,a)$, $\mm_{k,i}^a(d,\chi)$ is empty except for finitely many pairs $(k,i)$.

\end{prop}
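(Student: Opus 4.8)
The plan is to show that the set of pairs $(k,i)$ for which $\mm_{k,i}^a(d,\chi)$ is nonempty is contained in a finite box, by using the structure result of Remark \ref{lfr} together with the boundedness of $\mm^a_{\bullet}(d,\chi)$ and the numerical constraints coming from $d$ and $\chi$. First I would recall from Remark \ref{lfr} that $F\in\mm_{k,i}^a(d,\chi)$ if and only if the canonical bundle $E_F$ in the resolution (\ref{lr}) has lowest summand exactly $\mo_{\p^2}(-i-2)^{\oplus k}$, i.e. $\alpha_0=-i-2$ and $\beta_0=k$. Since $F$ determines $E_F$ uniquely, it suffices to bound the possible values of $\alpha_0$ (equivalently $i$) and of $\beta_0$ (equivalently $k$) over all $F\in\mm^a_{\bullet}(d,\chi)$.

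Next I would extract the numerical constraints. From Remark \ref{lfr} we have $rk(E_F)=d$ and $c_1(E_F)=(\chi-d)H$, so writing $E_F=\oplus_{s=0}^m\mo_{\p^2}(\alpha_s)^{\oplus\beta_s}$ with $\beta_s>0$ and $\alpha_m>\cdots>\alpha_0$, we get $\sum_s\beta_s=d$ and $\sum_s\alpha_s\beta_s=\chi-d$. In particular $1\le\beta_0\le d$, which already bounds $k$. For $i$, note that purity of $F$ forces $H^1(F(n))=0$ for $n\gg 0$ and $H^0(F(n))=0$ for $n\ll 0$; more concretely, the sequence (\ref{lr}) shows $H^0(F(n))$ is a quotient of $H^0(E_F(n))$, so $F(n)$ has no sections once $n<-\alpha_m$, while $H^1(F(n))\ne 0$ forces $n+\alpha_0+2\le$ something controlled by $d$ and $\chi$ via Riemann–Roch on the support curve (of arithmetic genus $\binom{d-1}{2}$). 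Combining $\alpha_0<\alpha_1<\cdots<\alpha_m$ with $\sum\beta_s=d$ and $\sum\alpha_s\beta_s=\chi-d$ pins $\alpha_0$ into a finite interval: $\alpha_0$ cannot be too large (else $\sum\alpha_s\beta_s>\chi-d$) nor too small (using the semistability/condition $(C_1)$ or $(C_2)$, which bounds the slopes of subsheaves and hence prevents $E_F$ from having an arbitrarily negative lowest summand). Therefore both $k=\beta_0$ and $i=-\alpha_0-2$ range over finite sets.

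The main obstacle I expect is the lower bound on $\alpha_0$: a priori an unstable-looking piece could drag the lowest summand of $E_F$ very negative. This is exactly where condition $(C_1)$ (every subsheaf $F'$ has $\chi(F')\le a$) or $(C_2)$ (semistability) must be used. The point is that if $\alpha_0$ is very negative, the subsheaf $F'$ with $E_{F'}=\mo_{\p^2}(\alpha_0)^{\oplus\beta_0}$ — or, when $E_F$ is disconnected, the subsheaf described at the end of Remark \ref{lfr} — has Euler characteristic $\chi(F')$ growing without bound (its support degree is $\beta_0\le d$ but its $\chi$ is roughly $\beta_0(\alpha_0+1)+\binom{\beta_0}{2}$-type corrections, which $\to+\infty$ as... actually one checks it is bounded below, so one instead bounds via the \emph{quotient}), contradicting the fixed bound $a$ (respectively the semistability slope). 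Making this precise is a short computation with Riemann–Roch for sheaves on plane curves, combined with the already-established fact (Remark \ref{con}(2), Corollary after Proposition \ref{dlt}) that $\dim\mm^a_{\bullet}(d,\chi)=d^2$ is finite; boundedness of $\mm^a_{\bullet}(d,\chi)$ then immediately yields boundedness of the family of resolutions (\ref{lr}), hence of the $\alpha_s$ and $\beta_s$, and in particular of $(\alpha_0,\beta_0)$. Once that is in hand, the finiteness of the set of $(k,i)$ with $\mm_{k,i}^a(d,\chi)\ne\emptyset$ follows at once.
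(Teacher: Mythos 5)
Your proposal is correct and is essentially the argument the paper intends: the paper gives no written proof (it declares the proposition ``easy to see''), and the content is exactly what you extract from Remark \ref{lfr} --- $k=\beta_0\le rk(E_F)=d$, the upper bound $\alpha_0\le(\chi-d)/d$ from $\alpha_s\ge\alpha_0$ and $\sum_s\alpha_s\beta_s=\chi-d$, and the lower bound on $\alpha_0$ (equivalently the upper bound on $i$) coming from the boundedness of $\mm^a_{\bullet}(d,\chi)$ asserted just after Remark \ref{con}. One correction to your middle paragraph, should you want to run the elementary route instead of citing boundedness: the lowest-degree block $\mo_{\p^2}(\alpha_0)^{\oplus\beta_0}$ corresponds to a \emph{quotient} of $F$, not a subsheaf (Remark \ref{lfr} puts the high-degree summands into the subsheaf $F'$ and the low ones into $F/F'$), so the condition $(C_1)$ or $(C_2)$ must be applied to the complementary subsheaf $F'$: one gets $\chi(F/F')=\chi-\chi(F')\ge\chi-a$, while the connected bottom block of degree $d''\le d$ satisfies $\chi(F/F')=\sum_{s<s_0}\beta_s(\alpha_s+1)\le d''(\alpha_0+d'')$, which pins $\alpha_0$ from below.
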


\begin{defn}Let $\mn^a(d,\chi)$ be the substack of $\mm_{\bullet}^a(d,\chi)$ parametrizing sheaves in $\mm_{\bullet}^a(d,\chi)$ with integral supports.  Let $\mn^a_{k,i}(d,\chi)=\mn^a(d,\chi)\cap \mm_{k,i}^a(d,\chi).$
\end{defn} 

\begin{rem} (1) It is obvious that $\mn^a(d,\chi)$ ($\mn^a_{k,i}(d,\chi)$ resp.) does not depend on $a$ and hence we write $\mn(d,\chi)$ ($\mn_{k,i}(d,\chi)$ resp.) for short.  Also we see that $\mn(d,\chi)\subset\mm(d,\chi)$. 

(2) Let $N(d,\chi)$ be the image of $\mn(d,\chi)$ in the (coarse) moduli space $M(d,\chi)$.  Since $N(d,\chi)$ contains a family of Jacobians over all smooth curves of degree $d$, we see that $dim~N(d,\chi)\geq d^2+1$.  Hence $dim~\mn(d,\chi)=d^2.$
\end{rem}

\begin{defn}(1) For two integers $l>0$ and $j$, we define $\mw_{l,j}^a(d,\chi)$ to be the (locally closed) substack of $\mm_{\bullet}^a(d,\chi)$ parametrizing sheaves $F\in\mm_{\bullet}^a(d,\chi)$ with  $h^0(F(j)):=dim~H^0(F(j))=l$ and $h^0(F(n))=0,\forall n<j$. 

(2) Let $\mv(d,-\chi)$ be the substack of $\mm_{\bullet}^a(d,\chi)$ parametrizing sheaves in $\mm_{\bullet}^a(d,\chi)$ with integral supports.  Let $\mv_{l,j}(d,\chi)=\mv(d,\chi)\cap \mw_{l,j}^a(d,\chi).$
\end{defn} 
\begin{rem}\label{duke}By sending each sheaf $F$ to its dual $\me xt^1(F,\mo_{\p^2}(-3))$, we get an isomorphism $\mm_{k,i}^a(d,\chi)\xrightarrow{\cong}\mw^{-\chi+a}_{k,-i}(d,-\chi)$, which identifies $\mn_{k,i}(d,\chi)$ with $\mv_{k,-i}(d,-\chi)$.
\end{rem}

\begin{prop}\label{dnki}For $\chi+id\geq0$, $dim~\mn_{k,i}(d,\chi)\leq d^2-(\chi+id)-k$.
\end{prop}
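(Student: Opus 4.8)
The plan is to cut $\mn_{k,i}(d,\chi)$ into strata according to the type of the canonical resolution (\ref{lr}) of Remark \ref{lfr}, to bound the dimension of each stratum, and to verify one short combinatorial inequality; throughout write $\hom(A,B):=\dim\mathrm{Hom}(A,B)$. The key structural point is that for $F\in\mn_{k,i}(d,\chi)$ the bundle $E_F$ of (\ref{lr}) is \emph{connected}. Indeed, if $E_F$ had a gap $\alpha_{s_0}>\alpha_{s_0-1}+1$, then by Remark \ref{lfr} $F$ would admit a nonzero proper quotient $F\twoheadrightarrow F/F'$ with $E_{F/F'}=\bigoplus_{s<s_0}\mo(\alpha_s)^{\beta_s}$, so $c_1(F/F')=\bigl(\sum_{s<s_0}\beta_s\bigr)H$ with $0<\sum_{s<s_0}\beta_s<d$; but $F/F'$ is a nonzero sheaf supported on the integral curve $\mathrm{Supp}(F)$ of degree $d$, so its first Chern class must be a positive multiple of $dH$ — a contradiction. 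Combining this with the equivalence recorded in Remark \ref{lfr}, $\mn_{k,i}(d,\chi)$ is the finite disjoint union of locally closed substacks $\mn^{\tau}$, one for each connected shape $E_F=\bigoplus_{s=0}^{m}\mo(-i-2+s)^{\beta_s}$ with all $\beta_s\ge1$, $\beta_0=k$, $\sum_s\beta_s=d$ and $\sum_s s\,\beta_s=\chi+(i+1)d$ (the last two being $\mathrm{rk}\,E_F=d$ and $c_1(E_F)=(\chi-d)H$).

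Fix such a $\tau$ and write $E:=E_F$. Presenting every $F$ in this stratum as a cokernel $0\to E(-1)\xrightarrow{M}E\to F\to0$ and comparing the space of matrices $M$ with the action of $\mathrm{Aut}(E(-1))\times\mathrm{Aut}(E)$ — the standard dimension count for strata of moduli of pure $1$-dimensional sheaves by minimal-resolution type, which uses only the vanishing of $H^1$ of line bundles on $\p^2$ (this gives in particular $\hom(E,F)=\hom(E,E)-\hom(E,E(-1))$ by running $\mathrm{Hom}(E,-)$ through (\ref{lr})) — one obtains
\[\dim\mn^{\tau}(d,\chi)\ \le\ \hom(E(-1),E)-2\,\hom(E,E)+\hom(E,E(-1)).\]
Substituting $\hom(\mo(a),\mo(b))=\binom{b-a+2}{2}$, read as $0$ when $b<a$, and using that $h^0(\mo(n+1))-2h^0(\mo(n))+h^0(\mo(n-1))$ equals $1$ for $n\ge-1$ and $0$ for $n\le-2$, the right-hand side collapses (for connected $E$) to $d^{2}-P(\tau)$, where $P(\tau):=\sum_{t\ge s+2}\beta_s\beta_t$.

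It remains to check $P(\tau)\ge(\chi+id)+k$ for every admissible connected $\tau$. The constraints $\sum_s\beta_s=d$ and $\sum_s s\,\beta_s=\chi+(i+1)d$ give $\sum_s(s-1)\beta_s=\chi+id$; since the $s=0$ term is $-k$, this reads $(\chi+id)+k=\sum_{t\ge2}(t-1)\beta_t$. On the other hand $P(\tau)=\sum_{t}\beta_t\bigl(\sum_{s\le t-2}\beta_s\bigr)$, and connectedness forces $\sum_{s\le t-2}\beta_s\ge\beta_0+(t-2)=k+(t-2)$, so
\[P(\tau)\ \ge\ \sum_{t\ge2}\beta_t\,(k+t-2)\ =\ \bigl((\chi+id)+k\bigr)+(k-1)\sum_{t\ge2}\beta_t\ \ge\ (\chi+id)+k,\]
because $k\ge1$. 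Hence $\dim\mn^{\tau}(d,\chi)\le d^{2}-(\chi+id)-k$ for each $\tau$, and the proposition follows.

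The hypothesis $\chi+id\ge0$ is used only to make $h^0(F(i))=(\chi+id)+k$ positive, so that the estimate is not vacuous; it is also the input one needs for the alternative, Brill--Noether-style proof, in which a section of $F(i)$ realizes $F$ as an extension of a finite-length sheaf by $\mo_{\mathrm{Supp}(F)}(-i)$ and one counts the curve plus the special-linear-series data. The only genuinely non-formal step is the displayed dimension bound in the second paragraph: it relies on the rigidity of the resolution (\ref{lr}) — essentially Lemma 2.2 of \cite{yth} and its proof, so that the space of matrices $M$ surjects onto $\mn^{\tau}$ with fibres of the expected dimension. Everything else is bookkeeping, and ``integral support'' enters only through the connectedness established in the first paragraph.
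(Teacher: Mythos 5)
Your proof is correct, but it takes a genuinely different route from the paper's. The paper argues by Serre duality: since $h^1(F(i))=k>0$ there is a non-split extension $0\to\mo_{\p^2}(-3)\to I_F(d-3)\to F(i)\to 0$, and integrality of the support plus non-splitness forces the middle term to be a twist of an ideal sheaf of colength $\tilde d_i=\frac{d(d-3)}2-(id+\chi)$; the bound then comes from injecting the stack $\mext^1(\mn_{k,i},\mo_{\p^2}(-3))^{*}$ (of relative dimension $k$ over $\mn_{k,i}$) into the stack $\mhom(\mo_{\p^2}(-3),\mh^{\tilde d_i}_{(3+i)d+\chi+1})^{*}$, whose dimension is at most $d^2-(\chi+id)$. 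You instead stratify by the shape of the minimal resolution $0\to E(-1)\to E\to F\to 0$ of Remark \ref{lfr}, use integrality of the support to force $E$ connected, bound each stratum by the orbit count $\hom(E(-1),E)-2\hom(E,E)+\hom(E,E(-1))=d^2-P(\tau)$, and finish with the combinatorial inequality $P(\tau)\ge(\chi+id)+k$; I checked the orbit/stabilizer computation (the stabilizer of $M$ in $\mathrm{Aut}(E(-1))\times\mathrm{Aut}(E)$ has dimension $\dim\mathrm{End}(F)+\hom(E,E(-1))$, so the $\mathrm{End}(F)$ terms cancel against the stack automorphisms), the second-difference computation of $h^0(\mo(n))$, and the inequality $\sum_{s\le t-2}\beta_s\ge k+t-2$ from connectedness, and all are sound. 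Each approach buys something: yours gives stratum-by-stratum information, does not actually need the hypothesis $\chi+id\ge0$ (your final inequality only uses $k\ge1$), and is self-contained modulo the standard rigidity of the minimal resolution; the paper's is shorter given the $\mext^{*}/\mhom^{*}$ formalism, and that formalism is precisely what is reused later (Lemma \ref{inch}, Lemma \ref{ddv} and the birational map $\theta$ in the proof of Theorem \ref{main}), so the paper's choice keeps the whole argument tied to the Hilbert-scheme comparison that is the point of the main theorem.
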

\begin{proof}Denote by $Hilb^{[n]}(\p^2)$ the Hilbert scheme of $n$-points on $\p^2$.  We view $Hilb^{[n]}(\p^2)$ as the moduli scheme of ideal sheaves with colength $n$ and every element in $Hilb^{[n]}(\p^2)$ has automorphism group $\mathbb{C}^{*}$.  Let $\mh^n$ be the stack associated to $Hilb^{[n]}(\p^2)/\mathbb{C}^{*}$.  Then $dim~\mh^n=2n-1$. 

Let $F\in\mn_{k,i}(d,\chi)$, then $H^1(F(i))\neq 0$ and hence we have a non split exact sequence
\begin{equation}\label{hib}0\ra \mo_{\p^2}(-3)\ra I_F(d-3)\ra F(i)\ra 0.
\end{equation} 
Since $Supp(F)$ is integral and (\ref{hib}) does not split, $I_F\in Hilb^{[\tilde{d_i}]}(\p^2)$ with $\tilde{d_i}:=\frac{d(d-3)}2-(id+\chi)$.  

On the other hand, let $I_{\tilde{d_i}}$ be an ideal sheaf of colength $\tilde{d_i}$, let $h\in\text{Hom}(\mo_{\p^2}(-3),I_{\tilde{d_i}}(d-3))$ with $h\neq0$, then $h$ has to be injective.  Let $F_h$ be the cokernel. 
\begin{equation}\label{bih}0\ra\mo_{\p^2}(-3)\xrightarrow{h} I_{\tilde{d_i}}(d-3)\ra F_h\ra0.
\end{equation}
%$F_h$ has to be pure of dimension 1, because $\text{Ext}^1(T,\mo_{\p^2}(-3))=0$ for any 0-dimensional sheaf $T$.  

Denote by $\mh^{\tilde{d_i}}_{(3+i)d+\chi+1}$ the (locally closed) substack of $\mh^{\tilde{d_i}}$ parametrizing ideal sheaves $I_{\tilde{d_i}}$ such that $dim~H^0(I_{\tilde{d_i}}(d))=(3+i)d+\chi+1$.  By (\ref{hib}), $I_F\in\mh^{\tilde{d_i}}_{(3+i)d+\chi+1}$ if $F\in\mn_{k,i}(d,\chi)$.

Let $\mext^1(\mn_{k,i},\mo_{\p^2}(-3))^{*}$ be the stack over $\mn_{k,i}(d,\chi)$ parametrizing non-spliting extensions in $\text{Ext}^1(F(i),\mo_{\p^2}(-3))$ with $F\in\mn_{k,i}(d,\chi)$.  Then 
\[dim~\mext^1(\mn_{k,i},\mo_{\p^2}(-3))^{*}=k+dim~\mn_{k,i}(d,\chi)\]

Let $\mhom(\mo_{\p^2}(-3),\mh^{\tilde{d_i}}_{(3+i)d+\chi+1})^{*}$ be the stack over $\mh^{\tilde{d_i}}_{(3+i)d+\chi+1}$ parametrizing non zero map in $\text{Hom}(\mo_{\p^2}(-3),I_{\tilde{d_i}}(d-3))$ with $I_{\tilde{d_i}}\in \mh^{\tilde{d_i}}_{(3+i)d+\chi+1}$. Then
\[\begin{array}{r}dim~\mhom(\mo_{\p^2}(-3),\mh^{\tilde{d_i}}_{(3+i)d+\chi+1})^{*}=(3+i)d+\chi+1+dim~\mh^{\tilde{d_i}}_{(3+i)d+\chi+1}\\
\leq 2\tilde{d_i}+\chi+(3+i)d=d^2-(\chi+id).\end{array}\]

We then have an injection by (\ref{hib})
\[\mext^1(\mn_{k,i},\mo_{\p^2}(-3))^{*}\hookrightarrow \mhom(\mo_{\p^2}(-3),\mh^{\tilde{d_i}}_{(3+i)d+\chi+1})^{*}.\]
Hence 
\[dim~\mext^1(\mn_{k,i},\mo_{\p^2}(-3))^{*}\leq dim~\mhom(\mo_{\p^2}(-3),\mh^{\tilde{d_i}}_{(3+i)d+\chi+1})^{*},\]
which implies
\[dim~\mn_{k,i}(d,\chi)\leq d^2-(\chi+id)-k.\]
The proposition is proved.
\end{proof}
\begin{rem}\label{duck}By Proposition \ref{dnki} and Remark \ref{duke}, we know that \[dim~\mv_{l,j}(d,\chi)\leq d^2+(\chi+jd)-l,~for~\chi+jd<0.\]
\end{rem}

Let $\ls$ be the linear system of $\mo_{\p^2}(d)$.  Then we have a morphism $\pi:\mm^a_{\bullet}(d,\chi)\ra\ls$ sending every sheaf to its support.  Denote $\ls_o$ the open subscheme of $\ls$ parametrizing all integral curves, $\ls_{r}$ the locally closed subscheme parametrizing sheaves with reducible supports, and finally $\ls_n$ the closed subscheme parametrizing sheaves with irreducible and non-reduced supports, i.e. of form $\frac{d}kC$ for some integral curve $C\in|kH|$.   We have that $\pi^{-1}(\ls_o)=\mn(d,\chi)$ and $\ls=\ls_o\cup\ls_r\cup\ls_n$.  

We want to estimate the codimension of the subset $\mc^a(d,\chi):=\mm_{\bullet}^a(d,\chi)-\mn(d,\chi)$.  Let $\mc^a_r(d,\chi):=\pi^{-1}(\ls_r)$ and $\mc^a_n(d,\chi):=\pi^{-1}(\ls_n)$.   
\begin{lemma}\label{codr}$\mc^a_r(d,\chi)$ is of codimension $\geq d-1$.
\end{lemma}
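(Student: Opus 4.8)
The plan is to stratify $\ms_r$ by the types of possible splittings $dH = d_1 H + d_2 H$ with $d_1, d_2 > 0$ of the support curve, and to bound the dimension of $\pi^{-1}$ of each stratum. For a sheaf $F\in\mc^a_r(d,\chi)$ whose support is reducible, one expects $F$ to decompose (or at least to have a subsheaf/quotient filtration) compatible with the decomposition of the support. More precisely, if $\mathrm{Supp}(F) = C_1 \cup C_2$ with $C_i\in|d_iH|$ (allowing $C_1$, $C_2$ to share components only in lower-dimensional strata, which we absorb into the estimate), then $F$ sits in an exact sequence $0\ra F_1\ra F\ra F_2\ra 0$ with $F_i$ a pure $1$-dimensional sheaf of class $d_iH$ and $\chi(F_i)=\chi_i$, $\chi_1+\chi_2=\chi$. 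The first step is to make this reduction rigorous: given the support is reducible, take $F_1$ to be the maximal subsheaf of $F$ supported on $C_1$ (the "saturation along $C_1$"), so that $F_2 = F/F_1$ is supported on $C_2$; both $F_i$ are pure of dimension $1$, and $\mathrm{Ext}^2(F_2,F_1)=0$ holds because $F_1, F_2$ are $1$-dimensional sheaves on a surface (this vanishing is the same input used in Proposition \ref{dlt}).

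The second step is the dimension count, which runs exactly parallel to Proposition \ref{dlt}. Each $F_i$ varies in a bounded family of dimension $\le d_i^2$ (using $dim~\mm^{a_i}_\bullet(d_i,\chi_i)=d_i^2$ from the corollary after Proposition \ref{dlt}, after bounding the relevant subsheaf-Euler-characteristic parameters $a_i$ in terms of $a$ and $d$), and the extension stack $\mext^1(F_2,F_1)$ has dimension $\le \chi(F_2,F_1)\le \dim\mathrm{Ext}^1(F_2,F_1)-\dim\mathrm{Hom}(F_2,F_1)\le d_1 d_2$, again by the $\mathbf{1}+\mathrm{Hom}(F_2,F_1)$-in-the-automorphism-group argument of diagram (\ref{exi}) together with $\mathrm{Ext}^2(F_2,F_1)=0$. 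Summing, $\dim\pi^{-1}(\text{stratum with split }(d_1,d_2))\le d_1^2+d_2^2+d_1 d_2 = d^2-d_1 d_2\le d^2-(d-1)$, where the last inequality uses $d_1 d_2\ge d-1$ for $d_1+d_2=d$, $d_i\ge 1$. There are only finitely many strata (finitely many $(d_1,d_2)$ and, after fixing $a$ and $d$, finitely many $(\chi_1,\chi_2)$ that can occur), so the maximum over strata gives $\mathrm{codim}~\mc^a_r(d,\chi)\ge d-1$.

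The main obstacle is the first step, namely making precise the claim that a reducible support forces a subsheaf/quotient decomposition along the components, and handling the case where the two curves $C_1, C_2$ share common components (so the "reducible" locus is not literally a product of two linear systems but is stratified by multiplicities along each irreducible component of $\mathrm{Supp}(F)$). The clean way around this is: let $C'$ be any proper subcurve of $C=\mathrm{Supp}(F)$ cut out by a proper factor of the defining equation, set $F_1 = $ the subsheaf of sections of $F$ scheme-theoretically supported on $C'$, and $F_2 = F/F_1$, supported on the residual curve; purity of $F_1$, $F_2$ follows since they are subsheaves/quotients-by-saturated-subsheaves of the pure sheaf $F$ and have $1$-dimensional support. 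Since the base $\ms_r\subset\ms = |dH|$ has the obvious stratification by the partition type of the factorization of the defining polynomial, and on each such stratum the degrees $(d_1,d_2)$ are fixed, the dimension count above applies stratum by stratum. One extra point to check is that the parameter $a_i$ bounding $\chi(F'_i)$ for subsheaves $F'_i\subset F_i$ stays bounded: this follows because $F_i$ is a subquotient of $F$ and $F\in\mm^a_\bullet(d,\chi)$, so condition $(C_1)$ or semistability of $F$ gives an a priori bound on $\chi$ of all subsheaves of $F$, hence of all subsheaves of $F_i$, uniformly in the family. With that in hand the induction-free estimate closes.
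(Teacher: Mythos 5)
Your overall strategy is the same as the paper's: split the support as $C=C_1\cup C_2$ with $C_1\cap C_2$ zero-dimensional, fit $F$ into $0\ra F_1\ra F\ra F_2\ra0$ with bounded invariants, and rerun the extension-stack count of Proposition \ref{dlt} to get $d_1^2+d_2^2+d_1d_2=d^2-d_1d_2\leq d^2-(d-1)$; your choice of $F_1$ (the maximal subsheaf annihilated by the equation $\delta_{C_1}$ of $C_1$, instead of the paper's route through $F|_{C_i}$ and torsion-free quotients) is a harmless variant. The genuine gap is in your justification of the key vanishing $\text{Ext}^2(F_2,F_1)=0$, which you assert holds ``because $F_1,F_2$ are $1$-dimensional sheaves on a surface, the same input used in Proposition \ref{dlt}.'' That reason is false: by Serre duality $\text{Ext}^2(F_2,F_1)\cong\text{Hom}(F_1,F_2(-3))^{\vee}$, which is nonzero in general for $1$-dimensional sheaves (take $F_1=\mo_C$, $F_2=\mo_C(k)$ with $k\geq3$). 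In Proposition \ref{dlt} the vanishing comes from the slope inequality between the destabilizing subsheaf and quotient, not from dimension reasons, and in the present lemma it comes from a different mechanism, which is precisely the content of the paper's proof: since $F_1$ is an $\mo_{C_1}$-module, $F_2$ is a pure $\mo_{C_2}$-module and $C_1\cap C_2$ is $0$-dimensional, any map $F_1(3)\ra F_2$ has $0$-dimensional image, hence vanishes, and Serre duality then gives $\text{Ext}^2(F_2,F_1)=0$. Your construction does make this argument available, but you neither state nor prove it, and the step as you justify it would not stand.

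Two further points in your first step also need an argument, and both are supplied by one observation you omit: multiplication by $\delta_{C_1}$ embeds $F_2=F/F_1$ into $F(d_1)$. First, purity of $F_2$ is not automatic from being a quotient of the pure sheaf $F$ (that $F_1$ is saturated is exactly what must be checked), and this embedding proves it. Second, subsheaves of $F_2$ are not subsheaves of $F$, so condition $(C_1)$ or semistability of $F$ does not directly bound their Euler characteristics; via $G\subset F_2\hookrightarrow F(d_1)$ one gets $\chi(G)\leq a+d^2$ for all $G\subset F_2$, which is what bounds $a_2$ and gives the finiteness of the possible $(\chi_1,\chi_2)$ (compare the paper's use of the sequence (\ref{psfto})). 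With these repairs your argument closes and coincides in substance with the paper's proof.
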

\begin{proof}We can use the same strategy as in Proposition \ref{dlt}.  Hence it is enough to show that every sheaf $F\in\mc^a_r(d,\chi)$ can be written as an extension of $F_2\in\mm^{a_2}_{\bullet}(d_2,\chi_2)$ by $F_1\in\mm^{a_1}_{\bullet}(d_1,\chi_1)$ with $\text{Ext}^2(F_2,F_1)=0$, and moreover there are finitely many possible choices of $((d_1,\chi_1),(d_2,\chi_2))$ and we can find upper bounds for $a_i$.

Let $C$ be the support of $F\in\mc_r^a(d,\chi)$.  $C$ is reducible, so we can write $C=C_1\cup C_2$ such that $C_1\cap C_2$ is of 0-dimension.  Let $d_i$ be the degree of $C_i$.  Then we have two exact sequences.
\begin{equation}\label{sfot}0\ra\mo_{C_1}(-d_2)\ra\mo_C\ra\mo_{C_2}\ra0;
\end{equation}
\begin{equation}\label{sfto}0\ra\mo_{C_2}(-d_1)\ra\mo_C\ra\mo_{C_1}\ra0.
\end{equation}
Tensor (\ref{sfot}) and (\ref{sfto}) by $F$ and we get
\begin{equation}\label{tsfot}Tor^1(F,\mo_{C_2})\xrightarrow{\jmath_1} F(-d_2)|_{C_1}\xrightarrow{\imath_1} F \ra F|_{C_2}\ra0;
\end{equation}
\begin{equation}\label{tsfto}Tor^1(F,\mo_{C_1})\xrightarrow{\jmath_2} F(-d_1)|_{C_2}\xrightarrow{\imath_2} F\ra F|_{C_1}\ra0.
\end{equation} 
Let $F_i^{tf}$ be the quotient sheaf of $F|_{C_i}$ module its maximal 0-dimensional subsheaf.  Then the image of $\imath_1$ is $F^{tf}_1(-d_2)$, because the image of $\jmath_1$ is supported at $C_1\cap C_2$ and hence a 0-dimensional subsheaf in $F(-d_2)|_{C_1}$ and $F$ is pure.  The same holds for $\imath_2$.  Hence we have
\begin{equation}\label{psfot}0\ra F_1^{tf}(-d_2)\ra F \xrightarrow{p_2} F|_{C_2}\ra0;
\end{equation}
\begin{equation}\label{psfto}0\ra F_2^{tf}(-d_1) \ra F\ra F|_{C_1}\ra0.
\end{equation} 
Compose map $p_2$ with the surjection $F|_{C_2}\ra F_2^{tf}$, we get a sequence as follows.
\begin{equation}\label{fsfot}0\ra F_1\ra F \ra F_2^{tf}\ra0;
\end{equation}
where $F_1$ is the extension of the maximal 0-dimensional subsheaf of $F|_{C_2}$ by $F_1^{tf}(-d_2)$.  Hence $a\geq\chi(F_1)\geq\chi(F_1^{tf}(-d_2))=\chi(F_1^{tf})-d_2d_1$.   The same holds for $F^{tf}_2$ and hence we have $\chi(F_2^{tf})\leq a+d_1d_2\leq a+d^2$.  Moreover for every subsheaf $G\subset F_2^{tf}$, by (\ref{psfto}) $G(-d_1)$ is a subsheaf of $F$, hence $\chi(G(-d_1))=\chi(G)-d(G)d_1\leq a$, and hence $\chi(G)\leq a+d^2$. 

Now (\ref{fsfot}) gives us the extension we need: $F_1\in\mm^a_{\bullet}(d_1,\chi_1)$, $F_2^{tf}\in\mm^{a+d^2}_{\bullet}(d_2,\chi_2)$;  and since $C_1\cap C_2$ is of 0-dimensional and both $F_1$ and $F_2^{tf}$ are pure of dimensional 1, $\text{Hom}(F_1(3),F_2^{tf})=0$ and hence $\text{Ext}^2(F_2^{tf},F_1)=0$.  For fixed $(d,\chi,a)$, there are finitely many possible choices of $((d_1,\chi_1),(d_2,\chi_2))$ because $\chi-a-d^2\leq\chi_1\leq a$.  Hence the lemma.
\end{proof}

The codimension of $\mc^a_n(d,\chi)$ is more complicated to estimate and the result is not so neat as $\mc^a_r(d,\chi)$.  We do it Section 3.

%%%%%%%%%%%%%%%%%%%%%%%%%%%%%%%%%%%%%%%%%%%%%%%%%%%%%%%%%%%%%%
\section{Sheaves with non-reduced supports.}  %%%%%%%%%%%%%         Section 3        %%%%%%%%%%%%%%%%%%%
Sheaves in $\mc^a_n(d,\chi)$ have their supports the form $\frac{d}{k}C$ with $C$ an integral curve with degree $k$.  Let $\mc_k\subset\mc^a_n(d,\chi)$ be the substack parametrizing sheaves with supports $\frac{d}kC$ for $C\in|kH|^o$.  Hence $\mc^a_n(d,\chi)$ is a disjoint union of $\mc_k$ with $k|d$.

%%%%%%%%%%%%%%%%%         Subsection 3.1      %%%%%%%%%%%%%%%%%%
\begin{flushleft}{\textbf{$\dagger$ \large{$\mc_k$ for $k=1,2$.}}}\end{flushleft} 

\begin{prop}\label{ckot}For $k=1,2$, $\mc_k$ is of codimension $\geq d-1$.
\end{prop}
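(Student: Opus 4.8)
The plan is to treat the cases $k=1$ and $k=2$ separately, in both cases reducing to a codimension estimate for a family built out of the integral curve $C$ together with a smaller sheaf supported on it. When $k=1$, a sheaf $F\in\mc_1$ is supported on $dC$ with $C$ a line, so $d(F)=d$ and $F$ is a module over $\mo_{dC}$ where $dC\subset\p^2$ is a fat line (equivalently, a length-$d$ subscheme of $\p^2$ supported on a line, up to the identification with $\mo_{\p^1}$-module structure). The first step is to describe such $F$ via a filtration by the subsheaves $F_j:=\ker(F\to F\otimes\mo_{jC})$, whose successive quotients are pure sheaves on the reduced line $C\cong\p^1$; these quotients are torsion-free $\mo_{\p^1}$-modules of ranks $r_1,\dots$ summing to $d$, each determined by its rank and degree, so they live in a very small (finite-dimensional, indeed bounded by a constant depending only on $d,\chi$) family. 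The relevant moduli therefore fibres over the $\p^2$-worth of lines $|H|$, with fibre of dimension bounded by the dimension of the space of iterated extensions of these $\mo_{\p^1}$-modules, which one computes to be at most something like $2(d-1)$ in the worst split case. Comparing with $\dim\mm_\bullet^a(d,\chi)=d^2$ from the corollary to Proposition \ref{dlt}, this gives codimension $\geq d^2-(2+2(d-1))=d^2-2d$, which is $\geq d-1$ for $d$ large; the finitely many remaining small $d$ are checked by hand (and $d=1,2$ are trivial since then $\mc_1$ is either empty or forces $F$ to be a line bundle on the reduced curve, hence in $\mn$).

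For $k=2$, a sheaf $F\in\mc_2$ is supported on $\frac{d}{2}C$ with $C$ an integral conic, so $d$ is even and $C$ moves in the $5$-dimensional linear system $|2H|^o$. The same two-step filtration argument applies: filter $F$ by $\mo_{jC}$-powers, getting successive quotients that are pure dimension-$1$ sheaves on the integral conic $C$ (a curve of arithmetic genus $0$), each of some rank $r_s$ with $\sum r_s=d/2$. Each such quotient on a fixed $C$ moves in a family of dimension roughly $r_s^2\cdot 0 + 1 = 1$ if $C$ is smooth (it is then $\cong\p^1$ and the argument is as above) — more carefully, the relevant count is the dimension of $\overline{\mathrm{Pic}}$-type moduli on $C$ plus extension spaces. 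Adding the $5$ dimensions for $C$ itself and bounding the extension contributions, one gets a bound of the form $5 + c\cdot(d/2)$ for a small constant $c$, which against $d^2$ again yields codimension $\geq d-1$ for all $d$ past a small threshold, with the finitely many exceptions and $d=2$ (where $\frac d2 C=C$ is integral, so $\mc_2=\emptyset$) handled directly.

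The main obstacle I expect is getting the extension-space contribution sharp enough: a crude bound on $\dim\mext^1$ between the graded pieces (using $\chi$ of $\mathrm{Hom}$–$\mathrm{Ext}^1$ as in the proof of Proposition \ref{dlt}) may only give codimension growing like $d^2-O(d)$, which is more than enough asymptotically but forces an explicit finite check for small $d$; one must make sure that for the genuinely small values of $d$ (where $d-1$ is itself small, so the bound is weakest) the sheaves in $\mc_k$ either fail to exist or can be analysed completely. A secondary technical point is that one is working at the stack level, so the automorphism-group correction (the $\mathbf{1}+\mathrm{Hom}$ trick from diagram (\ref{exi})) must be invoked to pass from $\mathrm{Ext}^1$ to $\mext^1$ dimensions, exactly as in Proposition \ref{dlt} and Proposition \ref{dnki}; this is routine but needs to be stated. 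Once the filtration is set up, the rest is bookkeeping of Euler characteristics and ranks together with the standard $\dim\mext^1\le\chi(\,\cdot\,,\,\cdot\,)$ inequality.
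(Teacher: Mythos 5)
Your skeleton (filter $F$ by the thickenings of the reduced line/conic, then count a fibration over $|kH|$ by iterated extensions in the style of Proposition \ref{dlt}) is the same as the paper's, but the quantitative heart is missing and the one explicit number you give is unjustified. The extension data needed to reassemble $F$ from its graded pieces live on $\p^2$, not on $C$: for two pieces of multiplicities $d_1,d_2$ along $C$, the extension stack contributes $\dim\text{Ext}^1-\dim\text{Hom}=d_1d_2+\dim\text{Ext}^2$ even after the $\textbf{1}+\text{Hom}$ automorphism correction, so an honest iterated count for the fibre over a fixed line is of order $d^2/2$, not $2(d-1)$; consequently ``codimension $\geq d^2-2d$'' is not established by your argument (and the analogous linear bound $5+c\cdot d/2$ for $k=2$ has the same problem). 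A quadratic fibre bound would still be good enough to beat $d-1$, but to prove even that you must control exactly the two things your ``bookkeeping'' hides: (i) the degrees $s_i$ of the rank-one factors must range over a finite set, which is not automatic since only $F$ itself, not the intermediate subquotients, is constrained by $(C_1)$/semistability; and (ii) the $\text{Ext}^2$ terms (equivalently $\text{Hom}(F_i(3),F/F_i)$ by Serre duality), which for unbalanced degrees ruin the estimate $\dim\mext^1\leq\dim\text{Ext}^1-\dim\text{Hom}$.

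These two points are precisely the content of the paper's Lemma \ref{po}, which is what its proof of Proposition \ref{ckot} reduces to: for $k=1,2$ the reduced support is a line or a smooth conic (an integral conic cannot be singular), so $C\cong\p^1$ with $\xi=C.C\geq0$, and the key computation (\ref{exd}) shows that any extension on $\p^2$ of $\mo_C(s_2)$ by $\mo_C(s_1)$ with $s_1<s_2-\xi$ is forced to be an extension of $\mo_C$-modules, hence a split rank-two bundle on $\p^1$. Choosing $F_1$ of maximal degree and inducting then produces a filtration with line-bundle factors whose degree gaps satisfy $s_i-s_{i+1}\geq-\xi$ (settling (i)) and with $\text{Hom}(F_i(C),F/F_i)=0$, hence $\text{Ext}^2(F/F_i,F_i)=0$ (settling (ii)). With these inputs the Proposition \ref{dlt} machine gives $\dim\mc_k\leq\max_{d_1+d_2=d}\{d_1^2+d_2^2+d_1d_2\}=d^2-(d-1)$ uniformly in $d$, with no asymptotic estimate plus hand-check of small $d$ — which matters here, since your fallback inequality $d^2-2d\geq d-1$ fails exactly in the small-$d$ range where the statement is weakest. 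Without an argument supplying (i) and (ii), or a substitute for them, the proposal does not prove the proposition.
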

\begin{proof}We use the same strategy again as in Lemma \ref{codr} and Proposition \ref{dlt}, and the proposition follows immediately from the following lemma.
\end{proof}
\begin{lemma}\label{po}Let $F$ be a pure sheaf with support $r C$ on any surface $X$, such that $C\cong\p^1$.  Let $\xi=C.C$ be the self intersection number of $C$.  Assume moreover $\xi\geq0$.  Then $F$ admits a filtration
\[0=F_0\subsetneq F_1\subsetneq\cdots\subsetneq F_r=F,\]
such that $F_{i}/F_{i-1}\cong\mo_{\p^1}(s_i)$ and $s_i-s_{i+1}\geq -\xi$.  Moreover we can ask such filtration also to satisfy that 
\[\forall 0<i\leq r, \text{Hom}(F_i(C),F/F_i)=0.\]   
\end{lemma}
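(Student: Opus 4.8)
The plan is to build the filtration inductively on $r$, peeling off a line-bundle quotient of $F$ at each step. The key structural input is that $F$ is a pure sheaf on the non-reduced scheme $rC$ with $C\cong\p^1$; since $F$ is pure of dimension $1$ and $C$ is a Cartier divisor with $\mo_X(-C)$ locally free, tensoring $0\ra\mo_X(-C)\ra\mo_X\ra\mo_C\ra0$ with $F$ gives $F(-C)\xrightarrow{\cdot C} F\ra F|_C\ra 0$, and purity forces the kernel of multiplication by $C$ to be $F(-C)$ restricted to a subscheme, more precisely we get a short exact sequence $0\ra F'(-C)\ra F\ra F|_C\ra0$ where $F'$ is the image of $F$ under multiplication by the equation of $C$, hence a pure sheaf supported on $(r-1)C$. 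One checks $F|_C$ is a sheaf on $C\cong\p^1$, and I claim that after modifying the choice we may arrange $F|_C$ to be locally free on $\p^1$, i.e. a direct sum of line bundles: the torsion of $F|_C$ would contradict purity of $F$ together with $\xi\ge 0$, which is exactly where the self-intersection hypothesis enters (it guarantees $F(-C)\hookrightarrow F$ has no room to create torsion upstairs). Then $F|_C=\bigoplus\mo_{\p^1}(a_j)$, and I take $F_r/F_{r-1}$ to be the quotient of $F|_C$ onto its summand $\mo_{\p^1}(a)$ of \emph{largest} degree $a=s_r$, setting $F_{r-1}=\ker(F\twoheadrightarrow \mo_{\p^1}(s_r))$, which is pure with support $(r-1)C$ (an extension of a sub of $F|_C$ by $F'(-C)$).

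Next I would run the inductive hypothesis on $F_{r-1}$, whose support is $(r-1)C$, to get a filtration with quotients $\mo_{\p^1}(s_1),\dots,\mo_{\p^1}(s_{r-1})$ satisfying $s_i-s_{i+1}\ge-\xi$ for $i\le r-2$. The remaining inequality to check is $s_{r-1}-s_r\ge-\xi$, equivalently $s_r\le s_{r-1}+\xi$. For this I would look at the restriction-to-$C$ sequence for $F_{r-1}$: there is a surjection $F_{r-1}\twoheadrightarrow F_{r-1}|_C$, and $F_{r-1}/F_{r-2}\cong\mo_{\p^1}(s_{r-1})$ is a quotient of $F_{r-1}$ supported on $C$, so $\mo_{\p^1}(s_{r-1})$ is a quotient of $F_{r-1}|_C$, hence $s_{r-1}\ge \min_j(\text{degrees of }F_{r-1}|_C)$; separately, comparing $F_{r-1}|_C$ with $F|_C$ via the snake lemma applied to $F_{r-1}\hookrightarrow F\twoheadrightarrow\mo_{\p^1}(s_r)$ one gets that the degrees appearing in $F_{r-1}|_C$ are controlled by those of $F|_C$ and $\xi$ (the twist by $\mo_X(-C)$ restricted to $C$ is $\mo_{\p^1}(-\xi)$). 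Tracking these degree bounds carefully yields $s_r\le s_{r-1}+\xi$; this bookkeeping is the technical heart of the argument and I would organize it by always choosing the largest-degree summand to split off, so that the "gap" can only grow by at most $\xi$ when passing from $F$ to $F_{r-1}$.

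Finally, for the vanishing $\mathrm{Hom}(F_i(C),F/F_i)=0$: I would prove this simultaneously with the construction by strengthening the inductive choice. Note $F/F_i$ is pure of dimension $1$ supported on $rC$ with $c_1=(r-i)\cdot[C]$ after the dust settles — actually $F/F_i$ has support $\le iC$? — I need to be careful: $F/F_i$ has the quotients $\mo_{\p^1}(s_{i+1}),\dots,\mo_{\p^1}(s_r)$, so it is supported on a thickening of $C$ of order $r-i$, and $F_i(C)$ is supported on a thickening of order... the cleanest route is: $F_i(C)$ has all its "restriction-to-$C$ slopes" $\ge$ (smallest slope of $F_i$) $+\,\xi$, while $F/F_i$ has all composition-factor slopes $\le s_{i+1}\le s_i$; since a nonzero homomorphism between pure $1$-dimensional sheaves on $\p^1$-thickenings forces a slope inequality the wrong way, a strict choice of the $s_j$ (take the \emph{maximal} degree summand at each stage, and if there are ties break them so that the sub always has slopes no larger than the quotient) kills all such homs. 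I expect the main obstacle to be precisely this last point together with making the degree inequalities $s_i-s_{i+1}\ge-\xi$ and the $\mathrm{Hom}$-vanishing compatible in a single induction — one has to choose the splitting of $F|_C$ at each step not just "largest degree" but in a way that is robust under the recursion, and verifying that such a choice exists (using $\mathrm{Hom}_{\p^1}(\mo(a),\mo(b))=0$ for $a>b$ and semicontinuity) is where the real work lies. The base case $r=1$ is immediate since then $F=F|_C$ is already a line bundle on $\p^1$ by purity.
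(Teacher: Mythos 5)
There is a genuine gap, in two places. First, your structural claim that torsion in $F|_C=F\otimes\mo_C$ ``would contradict purity of $F$ together with $\xi\ge0$'' is false: take $X=\p^2$, $C$ a line (so $\xi=1$), and $F=I_{p,2C}\subset\mo_{2C}$ the ideal sheaf of a point $p\in C$ in the double line. This $F$ is pure with support $2C$, yet a local computation ($(x,y)/(xy,y^2)$ over $k[x]$) shows $F\otimes\mo_C$ has a skyscraper torsion summand at $p$. This is consistent with the paper's Proposition \ref{bcd}, which explicitly warns that the top factor $F\otimes\mo_C$ of the upper filtration need not be torsion free. So the first step of your induction --- splitting off the largest line-bundle summand of $F|_C$ --- is not available as stated, and ``after modifying the choice'' is not a proof. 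Second, and more importantly, the inequality $s_i-s_{i+1}\ge-\xi$ is exactly the hard part, and you leave it as ``bookkeeping.'' The actual mechanism (which the paper uses) is an $\text{Ext}$-dimension comparison: for an extension $0\ra\mo_C(s_1)\ra E\ra\mo_C(s_2)\ra0$ of sheaves on $X$, one computes via Riemann--Roch and Serre duality on $X$ that $\dim\text{Ext}^1_X(\mo_C(s_2),\mo_C(s_1))=s_2-s_1-1=\dim\text{Ext}^1_C(\mo_C(s_2),\mo_C(s_1))$ whenever $s_1<s_2-\xi$ (this is where $\xi\ge0$ enters, to keep both sides nonnegative). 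Hence every such extension is an extension of $\mo_C$-modules and splits. Choosing $F_1\cong\mo_C(s_1)$ to be a rank-one $\mo_C$-subsheaf of \emph{maximal} degree then forces $s_1\ge s_2-\xi$, since otherwise $\mo_C(s_2)$ would embed in $F$ with larger degree; induction on $F/F_1$ finishes the construction, and maximality also yields the $\text{Hom}$-vanishing. Your proposal never identifies this splitting criterion, and without it neither the degree gaps nor the $\text{Hom}(F_i(C),F/F_i)=0$ condition is established. (Note also that the paper filters from the bottom by maximal-degree subsheaves, whereas you peel off quotients from the top; the direction is not itself the problem, but the missing splitting lemma is.)
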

\begin{proof}Since $C\cong\p^1$, every pure sheaf on $C$ is locally free and splits into the direct sum of line bundles.  Now take an exact sequence on $X$
\[0\ra\mo_{C}(s_1)\ra E\ra\mo_C(s_2)\ra0.\] 
We claim that if $s_1<s_2-\xi$, then $E$ is a locally free sheaf of rank 2 on $C$ and hence $E$ splits into direct sum of two line bundles.   

Denote by $\text{Ext}^1_{C}(\mo_C(s_2),\mo_C(s_1))$ the group of extensions of $\mo_C(s_2)$ by $\mo_C(s_1)$ as sheaves of $\mo_C$-modules.  Each sheaf in $\text{Ext}^1_{C}(\mo_C(s_2),\mo_C(s_1))$ is a rank 2 bundle on $C$.  Notice that $\text{Ext}^1_{C}(\mo_C(s_2),\mo_C(s_1))$ is a linear subspace inside $\text{Ext}^1(\mo_C(s_2),\mo_C(s_1))$, since every non-split extension in $\text{Ext}^1_{C}(\mo_C(s_2),\mo_C(s_1))$ is a non-split extension in $\text{Ext}^1(\mo_C(s_2),\mo_C(s_1))$.  So to prove the claim, we only need to show the following statement.
\begin{equation}\label{exd}dim~\text{Ext}^1_{C}(\mo_C(s_2),\mo_C(s_1))=dim~\text{Ext}^1(\mo_C(s_2),\mo_C(s_1)),\forall s_1<s_2-\xi.\end{equation}
The LHS is easy to compute and we get LHS$=dim~H^1(\mo_{\p^1}(s_1-s_2))=s_2-s_1-1$.  Since $\xi\geq0$, $s_1\leq s_2-1$ and hence $s_2-s_1-1$ is a non-negative number.  

$\chi(\mo_C(s_2),\mo_C(s_1))=-C.C=-\xi$ by Hirzebruch-Riemman-Roch on $X$.  

$\text{Hom}(\mo_C(s_2),\mo_C(s_1))=0$ since $s_1\leq s_2-1$.  $dim~\text{Ext}^2(\mo_C(s_2),\mo_C(s_1))=$ $dim~\text{Hom}(\mo_C(s_1),\mo_C(s_2+K_X.C))$ by Serre duality, with $K_X$ the canonical line bundle on $X$.  The canonical line bundle on $C$ is given by $K_X\otimes\mo_X(C)|_C$ and isomorphic to $\mo_{\p^1}(-2)$, hence $K_X.C+C.C=-2$ and hence $K_X.C=-2-\xi.$  Therefore, $dim~\text{Hom}(\mo_C(s_1),\mo_C(s_2+K_X.C))=s_2-s_1-\xi-1\geq 0.$  Finally we have $dim~\text{Ext}^1(\mo_C(s_2),\mo_C(s_1))=s_2-s_1-1$.  Hence (\ref{exd}) holds.

Now we construct a filtration as follows.  We choose $F_1\cong\mo_C(s_1)$ to be the subsheaf supported on $C$ with rank 1 and the maximal degree, i.e. $\forall F'_1\subset F, F'_1\cong\mo_C(s'_1)$, then we have $s'_1\leq s_1$.  Apply induction assumption to $F/F_1$ and we then get a filtration.  It is easy to check that this filtration satisfies the property in the lemma.  Hence we proved the lemma.  
\end{proof}
\begin{rem}(1) Proposition 3.4 in \cite{moz} is a special case for Lemma \ref{po} with $\xi=0$.

(2) For sheaves $F_1$ and $F_2$ supported at an integral curve $C$, $\text{Ext}_C^i(F_1,F_2)$ is in general not a subspace of $\text{Ext}^i(F_1,F_2)$ for $i\geq 2$, i.e. the map $\text{Ext}_C^i(F_1,F_2)\ra\text{Ext}^i(F_1,F_2)$ might not be injective.
\end{rem}

%%%%%%%%%%%%           subsection 3.2 \mc_k in general                  %%%%%%%%%%%%%%%%%%%%%%%%
\begin{flushleft}{\textbf{$\dagger$\large{ $\mc_k$ in general.}}}\end{flushleft} 

\begin{prop}\label{gck}Let $F\in\mc_k$ and let $C$ be the reduced curve in $Supp(F)$, then there is a filtration of $F$ 
\[0=F_0\subsetneq F_1\subsetneq\cdots\subsetneq F_l=F,\]
such that $Q_i:=F_{i}/F_{i-1}$ are torsion-free sheaves on $C$ with rank $r_i$.  $\sum r_i=\frac{d}k$, and moreover there are injections $f^i_F:Q_{i}(-C)\hookrightarrow Q_{i-1}$ induced by $F$ for all $2\leq i\leq l$.
\end{prop}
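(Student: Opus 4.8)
The plan is to build the filtration by peeling off, at each step, the maximal torsion-free quotient of $F$ that is scheme-theoretically supported on the reduced curve $C$, and then to argue that the kernel is again a pure sheaf supported on the thickened curve $\frac dk C - C = \frac{d-k}k C$, so that induction on the multiplicity $\frac dk$ applies. Concretely, first I would restrict $F$ to $C$: tensoring the structure sequence $0\to\mo_X(-C)\to\mo_X\to\mo_C\to0$ with $F$ (here $X=\p^2$) gives $F(-C)\xrightarrow{\imath} F\to F|_C\to 0$, and I take $Q_l := (F|_C)^{tf}$, the quotient of $F|_C$ by its maximal $0$-dimensional subsheaf — this is torsion-free on $C$ since $C$ is integral. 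As in the proof of Lemma \ref{codr}, the image of $\imath$ is pure (a $0$-dimensional subsheaf of $F(-C)$ would violate purity of $F$), so $\ker(F\to Q_l)$ is a pure sheaf, and one checks its support is $\frac dk C$ minus one copy of $C$; if $\frac dk = 1$ we are done with a one-step filtration.

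The key point that makes the induction run is the existence of the induced maps $f^i_F$. Having set $F_{l-1} := \ker(F\to Q_l)$, I would observe that $F_{l-1}$ contains the image of $\imath: F(-C)\to F$ up to $0$-dimensional torsion — more precisely, the composite $F(-C)\to F\to Q_l$ factors through $(F|_C)^{tf}(-C)\to\ldots$, and after killing torsion one obtains a map from $(F/F_{l-1})(-C) = Q_l(-C)$ into the torsion-free part of $F_{l-1}|_C$, which by the inductive construction is the top quotient $Q_{l-1}$ of the filtration of $F_{l-1}$. Injectivity of $f^l_F: Q_l(-C)\hookrightarrow Q_{l-1}$ follows because both sheaves are torsion-free on the integral curve $C$ and the map is generically an isomorphism on the generic point of $C$ (away from $C_1\cap$ the locus where $F$ fails to be locally free on its support): a torsion-free sheaf on an integral curve has no nonzero subsheaf supported on a proper closed subset, so a nonzero map that is injective at the generic point is injective. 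Applying the induction hypothesis to $F_{l-1}\in\mc_{k}$ with multiplicity $\frac dk - 1$ yields the filtration $0=F_0\subsetneq\cdots\subsetneq F_{l-1}$ together with the maps $f^i_F$ for $2\le i\le l-1$, and appending $F_l=F$ with the map $f^l_F$ just constructed completes the filtration with $\sum_{i=1}^l r_i = (\frac dk - 1) + 1 = \frac dk$.

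I expect the main obstacle to be the careful bookkeeping around the induced maps $f^i_F$ and the verification that the kernel at each stage is genuinely supported on the next-smaller thickening $\frac{d-k}k C$ rather than on something larger — in particular ruling out that $F_{l-1}$ could still have some component of its support equal to all of $\frac dk C$ (this is where purity of $F$ together with the maximality in the choice of the torsion-free quotient is essential). A secondary technical point is that restriction to $C$ and the passage to torsion-free quotients must be handled for sheaves that need not be locally free on their support, so the maps $\imath$, $\jmath$ and the factorizations only commute up to $0$-dimensional sheaves; the cleanest way to package this is to work throughout modulo $0$-dimensional subsheaves, exactly as in Lemma \ref{codr}, and only at the end record that purity of $F$ forces all these $0$-dimensional ambiguities sitting inside subsheaves of $F$ to vanish.
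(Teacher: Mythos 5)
Your construction goes top--down: at each step you peel off the maximal torsion-free quotient of $F$ supported on $C$, i.e.\ you set $Q_l=(F|_C)^{tf}$ and $F_{l-1}=\ker(F\to Q_l)$. The parts of this that you flag as delicate are in fact fine (purity of $F$ does force $\delta_C^{\,l-1}F_{l-1}=0$, since for a local section $e$ of $F_{l-1}$ one has $J\cdot\delta_C^{\,l-1}e\subset\delta_C^{\,l}F=0$ for an ideal $J$ of finite colength, so $\delta_C^{\,l-1}e$ generates a $0$-dimensional subsheaf of $F$ and vanishes). The genuine gap is the injectivity of $f^l_F:Q_l(-C)\to Q_{l-1}$: your claim that this map is ``generically an isomorphism at the generic point of $C$'' is unjustified and in general false. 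At the generic point $\eta$ of $C$, the local ring $\mo_{X,\eta}$ is a DVR with uniformizer $\delta_C$, and $F_\eta\cong\bigoplus_j \mo_{X,\eta}/(\delta_C^{a_j})$; your map at $\eta$ is multiplication by $\delta_C$ from $F_\eta/\delta_C F_\eta$ to $\delta_C F_\eta/\delta_C^2F_\eta$, whose kernel is spanned by all summands with $a_j=1$. Concretely, for $F=\mo_{2C}\oplus\mo_C$ (which lies in $\mc_k$ for a suitable $a$ in condition $(C_1)$), your construction gives $Q_2=(F|_C)^{tf}=\mo_C^{\oplus2}$ of rank $2$ and $Q_1=\mo_C(-C)$ of rank $1$, so no injection $Q_2(-C)\hookrightarrow Q_1$ can exist for rank reasons. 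In other words, your filtration is essentially the \emph{upper} filtration of Proposition \ref{bcd} (with torsion-free-ified factors), and the maps naturally induced by $\delta_C$ on that filtration are the surjections $R_i(-C)\twoheadrightarrow R_{i-1}$, not injections; the injections of the present proposition cannot be extracted from it. A related symptom is your count $\sum r_i=(\frac dk-1)+1$, which silently assumes the top factor has rank $1$.

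The proposition requires the opposite, bottom--up construction, which is what the paper does: take $F_1$ to be the annihilator of $\delta_C$ in $F$ (the maximal subsheaf that is an $\mo_C$-module). Purity of $F$ makes $F_1$ torsion-free on $C$, and maximality plus purity makes $F/F_1$ pure again, so induction applies to $F/F_1$; effectively $F_i=\mathrm{Ann}_F(\delta_C^{\,i})$. The injections then come for free: multiplication by $\delta_C$ gives $m_C:F_2(-C)\to F_2$ with image in $F_1$ and kernel exactly $F_1(-C)$ (if $\delta_C e\in F_1$ then $\delta_C^2e=0$, so $e\in F_2$; and injectivity into $Q_1$ holds because $\delta_C e\in F_0=0$ forces $e\in F_1$), so $m_C$ induces $f^2_F:Q_2(-C)\hookrightarrow Q_1$, and the higher $f^i_F$ follow by the same induction. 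With the annihilator filtration the factor ranks are automatically non-increasing, $r_1\ge\cdots\ge r_l$, which is exactly what the injections encode and what the stratification (\ref{stmc}) uses.
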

\begin{proof}Let $\delta_C$ be the function defining the curve $C$.   Since $C$ is integral, $\delta_C$ is irreducible.  For a sheaf $F\in\mc_k$ with reduced support $C$,  $\exists~ l\in\mathbb{Z}_{>0}$ such that $\delta_C^l\cdot F=0$ and $\delta_C^{l-1}\cdot F\neq0.$  Take $F_1$ to be the subsheaf of all the annihilators of $\delta_C$, i.e. $F_1(U):=\{e\in F(U)|\delta_C\cdot e=0\},\forall~ U$ open.  $F_1$ is a pure 1-dimensional sheaf of $\mo_C$-module and hence it is a torsion free sheaf on $C$.  $F/F_1$ is pure of dimension 1, because $F_1$ is the maximal subsheaf of $F$ supported on $C$.
Apply the induction assumption to $F/F_1$,  and we get a filtration $0=F_0\subsetneq F_1\subsetneq\cdots\subsetneq F_l=F$ with $Q_i:=F_{i}/F_{i-1}$ torsion-free on $C$.

We want to show there are injective maps $f_F^i:Q_i(-C)\hookrightarrow Q_{i-1}$.  By induction, it is enough to construct the map $f_F^2:Q_2(-C)\hookrightarrow Q_1$.  We have the following exact sequence.
\begin{equation}\label{ftwo}0\ra Q_1\ra F_2\ra Q_2\ra 0.
\end{equation}
By the definition of $Q_1=F_1$ and $F_2$, we know that $\delta_C\cdot F_2\neq 0$ and $\delta^2_C\cdot F_2=0$.  Hence multiplying $\delta_C$ gives a non-zero map $m_C:F_2(-C)\ra F_2$ with the kernel $Q_1(-C)$ and the image contained in $Q_1$.  Hence $m_C$ induced a injective map $f_F^2:Q_2(-C)\hookrightarrow Q_1$.   Hence the proposition.
\end{proof}

Propositon \ref{gck} implies that we have a morphism from $\mc_k$ to some Flag scheme by sending $F$ to $(Q_l\subset Q_{l-1}(C)\subset\cdots\subset Q_1((l-1)C))$.  But still it is difficult to compute its dimension in general.
\begin{rem}The filtration constructed in the proof of Proposition \ref{gck} is unique.   Hence we stratify $\mc_k$ by the ranks $r_i$ of the factors $Q_i$ as follows.
 \begin{equation}\label{stmc}\mc_k=\displaystyle{\coprod_{\begin{array}{c}r_1\geq \cdots\geq r_l>0,\\ \sum r_i=\frac{d}{k}.\end{array}}} \mc_k^{r_1,\cdots,r_l}.\end{equation}
\end{rem}

\begin{lemma}\label{rko}$\mc_k^{1,1,\cdots,1}$ is of codimension $\geq d-1$ in $\mm^a_{\bullet}(d,\chi)$.
\end{lemma}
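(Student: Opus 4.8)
The plan is to analyze the stratum $\mc_k^{1,1,\dots,1}$ using the filtration from Proposition \ref{gck}. Here every factor $Q_i$ has rank $1$, so $Q_i$ is a torsion-free rank-$1$ sheaf on the integral curve $C\in|kH|^o$, and $l=d/k$. The key structural input is the chain of injections $f^i_F:Q_i(-C)\hookrightarrow Q_{i-1}$. I would first fix the curve $C$ and the discrete invariants (the degrees/Euler characteristics $\chi(Q_i)$, which are constrained by $\sum\chi(Q_i)=\chi$ and by the $(C_1)$/$(C_2)$ conditions up to the bounded ambiguity in $a$), so that it suffices to bound the dimension of the locus over a fixed $C$ and then add $\dim|kH|=\frac{k(k+3)}2$ back.

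Over a fixed $C$, I would build up the sheaf $F$ inductively along the filtration: having constructed $F_{i-1}$ with its top quotient $Q_{i-1}$, the sheaf $F_i$ is determined by (i) a choice of rank-$1$ torsion-free $Q_i$ on $C$, (ii) a nonzero map $f^i_F:Q_i(-C)\hookrightarrow Q_{i-1}$, and (iii) an extension class in $\mathrm{Ext}^1(Q_i,F_{i-1})$ compatible with the multiplication-by-$\delta_C$ structure. The moduli of rank-$1$ torsion-free sheaves of fixed degree on $C$ (a compactified Jacobian) has dimension equal to the arithmetic genus $g_C=\binom{k-1}{2}$ at stack level one must subtract the generic $\mathbb{C}^*$, and the injection $f^i_F$ and the compatible extension contribute controlled numbers. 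The point is to compare this count, stratum by stratum, with $\dim\mm^a_\bullet(d,\chi)=d^2$, using Hirzebruch--Riemann--Roch on $\p^2$ to compute the relevant Euler characteristics $\chi(Q_i,Q_{i-1})$, $\chi(Q_i,Q_i)$, exactly as in the proof of Proposition \ref{dlt}: each $\mathrm{Ext}^1$-stack has dimension $\le\chi(Q_i(-C),F_{i-1})$ after killing $\mathrm{Hom}$. Summing these contributions should give a total bounded by $d^2-(d-1)$, the extremal case being $k=1$ (so $l=d$, all $Q_i$ line bundles on $C\cong\p^1$), which is essentially the $\xi=1$ instance already handled by Lemma \ref{po} and its codimension consequence in Proposition \ref{ckot}; the $k\ge 2$ cases should be no worse because the gain from the positive genus of $C$ (more moduli for each $Q_i$) is outweighed by the loss of the full $d$-step splitting and by the constraints imposed by the injections $f^i_F$.

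Concretely, I expect the dimension bound to take the shape
\[
\dim \mc_k^{1,\dots,1}\ \le\ \frac{k(k+3)}2\ +\ \sum_{i=1}^{l}\Big(\dim\{\text{rank-}1\text{ t.f. }Q_i\}\Big)\ +\ \sum_{i=2}^{l}\Big(\hom(Q_i(-C),Q_{i-1})-1\Big)\ +\ \sum_{i=2}^{l}\big(\text{ext contributions}\big),
\]
and the whole computation reduces to checking that this sum never exceeds $d^2-d+1$. I would organize the bookkeeping by writing $\chi(Q_i)=\chi_i$, $d(Q_i)=k$, and expressing every term via Riemann--Roch, then optimizing over the $\chi_i$ subject to $\sum\chi_i=\chi$; since the bound is uniform in the $\chi_i$ (the quadratic form in the $\chi_i$ coming from the Ext-counts is the relevant intersection form, which is negative-semidefinite in the appropriate direction), the maximum is attained and equals the claimed value.

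The main obstacle will be step (iii): controlling the extension classes $\mathrm{Ext}^1(Q_i,F_{i-1})$ while respecting the $\mo_C$-module-up-to-$\delta_C^{l}$ structure, i.e. ensuring the constructed $F$ is genuinely a sheaf on $\frac{d}{k}C$ with the prescribed filtration and is pure. Equivalently, not every extension of $Q_i$ by $F_{i-1}$ carries a compatible $\delta_C$-action realizing $f^i_F$, so the naive count $\chi(Q_i,F_{i-1})$ overcounts and must be cut down to the sub-locus cut out by this compatibility — this is exactly the subtlety flagged in Remark (2) after Lemma \ref{po} that $\mathrm{Ext}^i_C$ need not inject into $\mathrm{Ext}^i$ for $i\ge 2$. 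I would handle this by working relative to $C$: replace global $\mathrm{Ext}$ on $\p^2$ by $\mathrm{Ext}$ over the thickened curve and use the adjunction/long exact sequences relating sheaves on $(i)C$ and $(i-1)C$, so that the multiplication map $m_C$ is built into the parametrization rather than imposed afterwards. Once the parametrization is set up correctly, the dimension count is routine Riemann--Roch plus the optimization described above, and I do not anticipate difficulty there.
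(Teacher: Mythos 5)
There is a genuine gap, and it is not the one you flag. For an upper bound on $\dim\mc_k^{1,\dots,1}$ you do not need the extensions in $\text{Ext}^1(Q_i,F_{i-1})$ to be compatible with the $\delta_C$-action: every $F$ in the stratum genuinely sits in $0\to F_1\to F\to F/F_1\to 0$, so it suffices to embed the stratum into the full extension stack and bound that; including extensions that leave the stratum only helps. The real difficulty, which your count ignores, is that the bound ``$\dim\mext^1\le -\chi(F_2,F_1)$'' used in Proposition \ref{dlt} and Lemma \ref{codr} is only valid when $\text{Ext}^2(F_2,F_1)=0$, and here it fails: $F_1$ and $F/F_1$ are supported on (thickenings of) the \emph{same} integral curve $C$, so by Serre duality $\text{Ext}^2(F/F_1,F_1)\cong\text{Hom}(F_1(3),F/F_1)^{\vee}$ is nonzero in general. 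Your claimed estimate ``each $\text{Ext}^1$-stack has dimension $\le\chi(Q_i(-C),F_{i-1})$ after killing Hom'' is therefore unjustified, and without an explicit upper bound $N$ on this $\text{Ext}^2$ the Riemann--Roch bookkeeping does not close to $d^2-(d-1)$: the unknown $\dim\text{Ext}^2$ enters additively into every step of the induction.

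The paper's proof supplies exactly this missing ingredient. After reducing to $k\ge 3$ (the cases $k=1,2$ are already covered by Proposition \ref{ckot}, so $C\cong\p^1$ is not the extremal case of this lemma but an excluded one), it observes that any nonzero map $F_1(3)\to F/F_1$ has image annihilated by $\delta_C$, hence lands in $Q_2$; then the injection $f^2_F:Q_2(-C)\hookrightarrow Q_1$ with $0$-dimensional cokernel embeds $\text{Hom}(Q_1(3),Q_2)$ into $\text{Hom}(Q_2(3-C),Q_2)\cong\text{Hom}(Q_2,Q_2\otimes\omega_C)$ (using that $C$ is Gorenstein with $\omega_C\cong\mo_C(-3+C)$), whose dimension is at most $\deg\omega_C+1$. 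This yields $N=\frac{d^2}{l^2}-\frac{3d}{l}+1$, and only with this $N$ does the inductive estimate
\begin{equation*}
\dim\mc_{\frac dl}^{1,\dots,1(l)}\ \le\ \dim\mc_{\frac dl}^{1,\dots,1(l-1)}+g_C-\chi(F/F_1,F_1)+N
\end{equation*}
produce codimension $\ge d-1$. So your overall architecture (induction along the filtration, compactified Jacobians of dimension $g_C$, Riemann--Roch for $\chi(Q_i,Q_j)$, adding $\dim|kH|$) matches the paper, but the proposal as written lacks the Serre-duality/$\omega_C$ argument bounding $\text{Ext}^2$, which is the heart of the proof; conversely, the $\delta_C$-compatibility issue you single out as the main obstacle can simply be discarded.
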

\begin{proof}In this case we have $l\cdot k=d$ and $l\geq2$.  It is easy to check for given $(d,\chi,a)$ there are finitely many possible choices for $(d(Q_i),\chi(Q_i))$, where $Q_i$ are the factors in the filtration in Proposition \ref{gck}.  Actually we have $d(Q_i)=1$, $\chi(Q_i)\geq\chi(Q_{i+1})-\frac{d^2}{l^2}$, $\displaystyle{\sum_{i=1}^s}\chi(Q_i)\leq a$ for all $s<l$ and finally $\displaystyle{\sum_{i=t}^l}\chi(Q_i)\geq \chi-a$ for all $t>1$.  By the finiteness of $\{(d(Q_i),\chi(Q_i))\}$, we can estimate the dimension of $\mc^{1,\cdots,1}_k$ for some fixed $(d(Q_i)=1,\chi(Q_i))$. 

With no loss of generality, we assume $k\geq3$.  We first prove the lemma for $l=2$.  Let $F\in\mc_{\frac d2}^{1,1}$.  Then $F$ can be fit in the following sequence.
\begin{equation}\label{let}0\ra Q_1\ra F\ra Q_2\ra 0.
\end{equation}
Let $C$ be the reduced support of $F$.  By Proposition \ref{gck} we have $Q_i$ are torsion free of rank 1 on $C$ and there is an injection $f:Q_2(-C)\hookrightarrow Q_1$.  The parametrizing space of rank 1 torsion free sheaves on $C$ is its compactified Jacobian and well-known to be integral with dimension the arithmetic genus $g_C$ of $C$ (see \cite{aik}).  If there is a number $N$ satisfying that $dim~\text{Ext}^2(Q_2,Q_1)\leq N$ for all $Q_i$ in (\ref{let}) with $F\in \mc^{1,1}_{\frac d2}$, then using analogous argument to Proposition \ref{dlt} we can easily deduce the following estimate.
\begin{equation}\label{eft}dim~\mc_{\frac d2}^{1,1}\leq dim~|\frac{d}2H|+g_C+g_C-\chi(Q_2,Q_1)+N-1.
\end{equation}
$g_C=\frac{(\frac d2-1)(\frac d2-2)}2,$ and $\chi(Q_2,Q_1)=-C.C=-\frac{d^2}4$ by Hirzebruch-Riemann-Roch. 

Now we need to find a suitable $N$ to bound the dimension of $\text{Ext}^2(Q_2,Q_1)$.  We find a upper bound of $dim~\text{Hom}(Q_1(3),Q_2)$.  Since there is an injection from $Q_2(-C)$ to $Q_1$ with cokernel 0-dimensional, $\text{Hom}(Q_1(3),Q_2)$ is a subspace of $\text{Hom}(Q_2(3-C),Q_2)$.  Since $C$ is Gorenstein with dualizing sheaf $\omega_C$ and $\mo_C(-3+C)\cong\omega_C$, we have
%In order to compute the dimension of $\text{Hom}(Q_2(3-\frac d2),Q_2)$, we can assume $\chi(Q_2)=-1$, then $\text{Hom}(Q_2,\omega_C)\neq 0$ with $\omega_C$ the dualizing sheaf on $C$.  Since both $Q_2$  and $\omega_C$ are torsion free of rank 1, any nonzero map $\tau: Q_2\ra \omega_C$ is injective.  Hence $\text{Hom}(Q_2(3-\frac d2),Q_2)$ is a subspace of $\text{Hom}(Q_2(3-\frac d2),\omega_C)=H^1(Q_2(3-\frac d2))^{\vee}$.  One computes that $deg(Q_2(3-\frac d2))=g_C-2-C.C+\frac{3d}2<0$ for $d=2\cdot deg(C)\geq 6$, hence we have the following estimate.
\begin{eqnarray}&dim&\text{Ext}^2(Q_2,Q_1)= dim~\text{Hom}(Q_1(3),Q_2)\nonumber\\
&\leq &dim~\text{Hom}(Q_2(3-C),Q_2)\nonumber\\&=& dim~\text{Hom}(Q_2,Q_2\otimes\omega_C) \nonumber\\
&\leq& deg(\omega_C)+1=\frac {d^2}4-\frac 32d+1.
\end{eqnarray}  
Let $N=\frac {d^2}4-\frac 32d+1$ and (\ref{eft}) gives the following equation.
\begin{equation}\label{efft}dim~\mc_{\frac d2}^{1,1}\leq d^2-(d-1)+(-\frac{d^2}8-\frac {11d}4+1)\leq d^2-(d-1).
\end{equation}
Hence we proved the lemma for $l=2$.

Let $l\geq 3$.  Let $F\in\mc^{1,\cdots,1}_{\frac dl}$ and take the filtration of $F$ as given in Proposition \ref{gck}.  Then we have the following sequence.
\begin{equation}\label{esg}0\ra F_1\ra F\ra F/F_1\ra 0.
\end{equation}
If $\exists~ N$ such that $dim~\text{Hom}(F_1(3),F/F_1)\leq N$ for all $F_1$ in (\ref{esg}) with $F\in \mc^{1,\cdots,1}_{\frac dl}$, then by induction assumption we have the following estimate.
\begin{eqnarray}\label{estg}&dim&\mc_{\frac dl}^{1,\cdots,1(l)}\leq dim~\mc^{1,\cdots,1(l-1)}_{\frac{d}l}+g_C-\chi(F/F_1,F_1)+N\nonumber\\
&\leq & (\frac{l-1}l)^2\cdot d^2-(\frac{l-1}l\cdot d-1)+g_C-\chi(F/F_1,F_1)+N
\end{eqnarray}
The number $l$ in $\mc_{\frac dl}^{1,\cdots,1(l)}$ stands for the number of $1$ in the superscript.  $\chi(F/F_1,F_1)=\frac{(l-1)d}{l}\cdot\frac dl$  by Hirzebruch-Riemann-Roch. 

Notice that any nonzero map $F_1(3)\ra F/F_1$ has its image annihilated by $\delta_C$ and hence contained in $Q_2=F_2/F_1$.  Thus $\text{Hom}(F_1(3),F/F_1)=\text{Hom}(F_1(3),Q_2)$ and then by the same argument as we did for $l=2$, we can let $N$ in (\ref{estg}) to be $\frac{d^2}{l^2}-\frac{3d}l+1$.   Therefore
\begin{eqnarray}\label{esff}&&dim~\mc_{\frac dl}^{1,\cdots,1(l)}\nonumber\\
&\leq& (\frac{l-1}l)^2\cdot d^2-(\frac{l-1}l\cdot d-1)+g_C+\frac{(l-1)d^2}{l^2}+\frac{d^2}{l^2}-\frac{3d}l+1\nonumber\\
&=&d^2-(d-1)+(\frac{3-2l}{2l^2}d^2-\frac{7}{2l}d+2)\leq d^2-(d-1).
\end{eqnarray}
The last inequality is because $l\geq 3$ and $d\geq 3l.$  Hence the lemma.

\end{proof}
\begin{prop}\label{coha}$\mc_{\frac d2}$ is of codimension $\geq d-1$ in $\mm_{\bullet}^a(d,\chi)$.
\end{prop}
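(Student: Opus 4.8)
The plan is to run the stratification (\ref{stmc}) with $k=\tfrac d2$. Since $\tfrac{d}{d/2}=2$, the only partitions are $(1,1)$ and $(2)$, so $\mc_{\frac d2}=\mc_{\frac d2}^{1,1}\coprod\mc_{\frac d2}^{2}$. If $d\in\{2,4\}$ then $\tfrac d2\in\{1,2\}$ and $\mc_{\frac d2}=\mc_1$ or $\mc_2$, so the statement is contained in Proposition \ref{ckot}; hence assume $d\geq 6$, i.e.\ $k:=\tfrac d2\geq 3$. The stratum $\mc_{\frac d2}^{1,1}$ is exactly the case $l=2$ of Lemma \ref{rko}, already shown to be of codimension $\geq d-1$ in $\mm_{\bullet}^a(d,\chi)$. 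So the only new work is the ``pure'' stratum $\mc_{\frac d2}^{2}$, which is the case $l=1$ in Proposition \ref{gck} (so $\delta_C\cdot F=0$): it parametrizes those $F$ that are rank-$2$ torsion-free sheaves on an integral curve $C\in|kH|$.

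I would bound $\dim\mc_{\frac d2}^{2}$ by the same mechanism as in Proposition \ref{dlt} and Lemma \ref{rko}. Since $\mc_{\frac d2}^{2}\subset\mm_{\bullet}^a(d,\chi)$ is bounded, for each $F$ choose a rank-$1$ subsheaf $L_1\subset F$ of maximal degree with $L_2:=F/L_1$ torsion free; then $L_1,L_2$ are rank-$1$ torsion-free on $C$, and $\deg L_1$ takes only finitely many values on $\mc_{\frac d2}^{2}$ (by boundedness, together with a Nagata-type bound $\deg L_1\geq\tfrac12(\deg_CF-g_C)$, so that $\deg L_2-\deg L_1\leq g_C$). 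Fixing $C$ and these degrees, $L_1$ and $L_2$ each move in the compactified Jacobian of $C$, of dimension $g_C=\tfrac{(k-1)(k-2)}2$ by \cite{aik}, and the extension class of $0\ra L_1\ra F\ra L_2\ra 0$ lies in $\text{Ext}^1_{\p^2}(L_2,L_1)$. Because $\chi_{\p^2}(L_2,L_1)=-c_1(L_2)\cdot c_1(L_1)=-k^2$, the diagram (\ref{exi}) argument gives $\dim\mext^1_{\p^2}(L_2,L_1)\leq -\chi_{\p^2}(L_2,L_1)+\dim\text{Ext}^2_{\p^2}(L_2,L_1)=k^2+\dim\text{Hom}_C(L_1,L_2(-3))$; and $\text{Hom}_C(L_1,L_2(-3))$ vanishes when $g_C<3k$ (i.e.\ for $k\leq 8$), and in general is at most $h^0$ of a rank-$1$ torsion-free sheaf of degree $\leq 2g_C-3k$, hence $\leq 2g_C-3k+1$. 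Summing as in (\ref{eft}) and using $\dim|kH|=\tfrac{k(k+3)}2$,
\[\dim\mc_{\frac d2}^{2}\ \leq\ \dim|kH|+g_C+g_C+k^2+\dim\text{Hom}_C(L_1,L_2(-3))+O(1)\ \leq\ \tfrac72 k^2+O(k),\]
which is smaller than $\dim\mm_{\bullet}^a(d,\chi)=4k^2$ by a quadratic amount, in particular by much more than $d-1$. Combined with Lemma \ref{rko} for $\mc_{\frac d2}^{1,1}$, this proves the proposition. (Equivalently, after first removing the codimension-$(\geq d-1)$ locus $\ts^a(d,\chi)$ of Proposition \ref{dlt} one may assume $F$ stable — hence $\mo_C$-stable, since all its subsheaves are supported on $C$ and the $\mo_C$-slope and reduced-Hilbert-polynomial comparisons agree — and bound $\dim\mc_{\frac d2}^{2}$ by $\dim|kH|$ plus the dimension $4g_C-3$ of the moduli of $\mo_C$-stable rank-$2$ torsion-free sheaves of fixed degree on the integral planar curve $C$, the rank-$2$ analogue of \cite{aik}.)

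The step I expect to be the real obstacle is the per-curve input over a singular integral $C$. Unlike every earlier estimate in Section 2, here $\text{Ext}^2_{\p^2}(L_2,L_1)$ — equivalently $\text{Ext}^2_C(F,F)$ — need not vanish, since $F$ (or $L_1$) may fail to be locally free at the singular points of $C$, and the local $\me xt^{\geq 2}_{\mo_C}$ of a non-free maximal Cohen–Macaulay module over a planar singularity is genuinely nonzero (it is $2$-periodic). One therefore has to carry this obstruction term along and bound it, as above, which the quadratic numerical slack comfortably permits; alternatively one invokes the singular-curve, higher-rank analogue of the Altman–Iarrobino–Kleiman structure theory. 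One should also confirm that the Nagata-type bound on maximal sub-line-bundles, and the estimate $h^0\leq\deg+1$ for rank-$1$ torsion-free sheaves, persist on integral curves with planar singularities; these are routine but must be stated. Once this curve-by-curve dimension bound is in hand, the remaining Ext-bookkeeping is identical to what is done repeatedly in Section 2 and in Lemma \ref{rko}, and the final inequality holds with room to spare.
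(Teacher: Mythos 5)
Your skeleton is the paper's: split $\mc_{\frac d2}=\mc_{\frac d2}^{1,1}\sqcup\mc_{\frac d2}^{2}$ via (\ref{stmc}), quote Lemma \ref{rko} for the first stratum, and for the second realize $F$ as an extension $0\ra L_1\ra F\ra L_2\ra 0$ of rank-one torsion-free sheaves on the integral curve $C$, count $C$, the two compactified Jacobians (via \cite{aik}) and the extension stack, with $-\chi(L_2,L_1)=k^2$ and the only danger being $\text{Ext}^2(L_2,L_1)=\text{Hom}(L_1(3),L_2)^{\vee}$. Up to that point you and the paper agree.

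The gap is in how you control that last term, and it is exactly the step you flag as ``routine but must be stated.'' You need (i) a Nagata-type bound $\deg L_2-\deg L_1\leq g_C$ for a maximal rank-one subsheaf of a rank-two torsion-free sheaf on a \emph{singular} integral plane curve, and (ii) a bound of the shape $\dim\text{Hom}_C(L_1,L_2(-3))\leq \deg+1$ for two \emph{different} rank-one torsion-free sheaves on such a curve; neither is proved, and neither is a standard citation in this generality. Moreover the constants are not harmless: at the singular points $\mathcal{E}nd(L_1)$ and $\mathcal{H}om(L_1,L_2)$ acquire corrections which are a priori of size comparable to $g_C\sim k^2/2$, and your total is already $\tfrac72k^2+O(k)$ against $4k^2$, so an extra $O(g_C)$ in the Hom bound would consume the slack you are counting on; the parenthetical fallback (a $4g_C-3$-dimensional moduli of stable rank-two torsion-free sheaves on singular planar $C$, a ``rank-two \cite{aik}'') is itself an unproved input of essentially the same difficulty as the statement being bounded. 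The paper's proof sidesteps all of this with one normalization you did not use: twist so that $0<\chi\leq d$ (harmless for a codimension statement), so $h^0(F)\geq\chi>0$ and a section gives $0\ra\mo_C\ra F\ra\widehat{I}\ra0$; saturating the torsion of $\widehat I$ produces $0\ra I_1\ra F\ra I_2\ra0$ with $\mo_C\subset I_1$ of $0$-dimensional quotient, whence $\text{Hom}(I_1(3),I_2)\subset\text{Hom}(\mo_C(3),I_2)=H^0(I_2(-3))\leq\deg I_2(-3)+1$, with no Nagata bound and no analysis of $\mathcal{E}nd$ at the singularities. So either supply proofs of (i) and (ii) with explicit constants on integral Gorenstein plane curves, or replace your choice of $L_1$ by the paper's section trick, after which your Ext-bookkeeping goes through as in Lemma \ref{rko} and yields (\ref{dhal}).
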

\begin{proof}According to the stratification (\ref{stmc}), $\mc_{\frac d2}$ only has two strata: $\mc_{\frac d2}^{1,1}$ and $\mc_{\frac d2}^{2}$.  The former is of codimension $\geq d-1$ by Lemma \ref{rko}.  Hence we only need to estimate $dim~\mc_{\frac d2}^{2}$.  Sheaves in $\mc_{\frac d2}^{2}$ are rank 2 torsion free sheaves on some integral curve $C$ of degree $\frac d2$.  With no loss of generality, we assume $0<\chi\leq d$.  Hence for every sheaf $F$ in $\mc_{\frac d2}^2$ with support $C$, there is a nonzero global section which has to be a injection since both $\mo_C$ and $F$ are torsion free and $C$ is integral.  Hence we have the following sequence.
\begin{equation}\label{had}0\ra\mo_C\ra F\ra \widehat{I}\ra 0.
\end{equation}
The quotient $\widehat{I}$ may not be torsion free.  Take $I_2$ to be the quotient of $\widehat{I}$ module its torsion.  Then we have another exact sequence as follows.
\begin{equation}\label{half}0\ra I_1\ra F\ra I_2\ra0,
\end{equation}
where $I_1$ is a torsion free rank 1 sheaf with non-negative degree.  Let $\chi_i=\chi(I_i)$.  Then we have $1-\frac{(\frac d2-1)(\frac d2-2)}2=\chi(\mo_C)\leq\chi_1\leq a$, hence there are finitely many possible choices for $(\chi_1,\chi_2)$.  Notice that $(\ref{half})$ gives an element in $\text{Ext}^1_C(I_2,I_1)$ which is a linear subspace inside $\text{Ext}^1(I_2,I_1)$.

If there is a number $N$ satisfying that $dim~\text{Ext}^2(I_2,I_1)\leq N$ for all $I_i$ in (\ref{half}) with $F\in \mc^{2}_{\frac d2}$, then using analogous argument we can easily deduce the following estimate.
\begin{equation}\label{dha}dim~\mc_{\frac d2}^{2}\leq dim~|\frac{d}2H|+g_C+g_C-\chi(I_2,I_1)+N-1,
\end{equation}
We can find a suitable $N$ to bound $dim~\text{Ext}^2(I_2,I_1)$ as follows.
\begin{eqnarray}&dim&\text{Ext}^2(I_2,I_1)= dim~\text{Hom}(I_1(3),I_2)\nonumber\\
&\leq &dim~\text{Hom}(\mo_C(3),I_2)= H^0(I_2(-3))\leq deg(I_2(-3))+1\nonumber\\
&\leq& deg(\widehat{I}(-3))=-\frac {3d}2+\chi+2(g_C-1)+1.
\end{eqnarray}  
Let $N=-\frac {3d}2+\chi+2g_C-1$ and (\ref{dha}) gives the following equation.
\begin{equation}\label{dhal}dim~\mc_{\frac d2}^{2}\leq d^2-(d-1)+(-\frac{d^2}8-\frac {11d}4+1+\chi),
\end{equation}
where $-\frac{d^2}8-\frac {11d}4+1+\chi\leq 0$ for $\chi\leq d$ and $d\geq 2$.  Hence the proposition.
\end{proof}

Lemma \ref{codr}, Proposition \ref{ckot} and  Proposition \ref{coha} together give the following proposition.
\begin{prop}\label{prime}For $d=p$ or $2p$ with $p$ a prime number,  the complement of $\mn(d,\chi)$ inside $\mm_{\bullet}^a(d,\chi)$ is of codimension $\geq d-1$. 
\end{prop}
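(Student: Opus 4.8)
The plan is to assemble \Cref{prime} from the ingredients already proved. Recall that $\mc^a(d,\chi) = \mm_\bullet^a(d,\chi) - \mn(d,\chi)$ decomposes as $\mc^a_r(d,\chi) \cup \mc^a_n(d,\chi)$, where $\mc^a_r$ collects sheaves with reducible support and $\mc^a_n = \pi^{-1}(\ls_n)$ collects sheaves whose support is $\frac{d}{k}C$ for some integral curve $C \in |kH|$ with $k \mid d$ and $k < d$. By \Cref{codr} we already know $\operatorname{codim} \mc^a_r(d,\chi) \geq d-1$, so it suffices to bound $\operatorname{codim} \mc^a_n(d,\chi)$. Since $\mc^a_n(d,\chi) = \coprod_{k \mid d,\, k<d} \mc_k$ is a finite disjoint union, it is enough to show each $\mc_k$ has codimension $\geq d-1$ in $\mm_\bullet^a(d,\chi)$.

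Now specialize to $d = p$ or $d = 2p$ with $p$ prime. First I would enumerate the divisors $k$ of $d$ with $k < d$. If $d = p$, the only such $k$ is $k=1$, and \Cref{ckot} (the $k=1$ case) gives $\operatorname{codim} \mc_1 \geq d-1$; combined with \Cref{codr}, this finishes the case $d=p$. If $d = 2p$ with $p$ odd prime, the divisors $k < d$ are $k = 1$, $k = 2$, and $k = p$. For $k = 1$ and $k = 2$ I invoke \Cref{ckot} directly. For $k = p$ we have $\frac{d}{k} = 2$, so $\mc_p = \mc_{d/2}$ in the notation of \Cref{coha}, and that proposition gives $\operatorname{codim} \mc_{d/2} \geq d-1$. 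The remaining edge case $d = 4$ (i.e. $p=2$, $d = 2p$) has divisors $k = 1, 2$ only, both handled by \Cref{ckot}. Taking the minimum over this finite list of strata and combining with \Cref{codr}, we conclude that $\mc^a(d,\chi)$, the complement of $\mn(d,\chi)$ in $\mm_\bullet^a(d,\chi)$, has codimension $\geq d-1$.

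There is essentially no obstacle here: \Cref{prime} is a bookkeeping consequence of the harder codimension estimates already established, and the only thing to check is that the proper divisors of $p$ and $2p$ are exactly those covered by \Cref{ckot} ($k=1,2$) and \Cref{coha} ($\frac{d}{k} = 2$). The one point worth stating carefully is why the general stratum $\mc_k^{r_1,\dots,r_l}$ never appears with $l \geq 3$ or with some $r_i \geq 2$ beyond what \Cref{coha} treats: this is precisely because $d = p$ forces $k=1$, $l = d$, and $d = 2p$ forces either $k \in \{1,2\}$ (covered by \Cref{ckot}) or $\frac{d}{k} = 2$ so that only the strata $\mc_{d/2}^{1,1}$ and $\mc_{d/2}^{2}$ occur — exactly the two strata analyzed in the proof of \Cref{coha}. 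Once this is noted, the proof is immediate.
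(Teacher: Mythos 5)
Your proposal is correct and takes essentially the same route as the paper, which obtains Proposition \ref{prime} by simply combining Lemma \ref{codr} (reducible supports), Proposition \ref{ckot} (the strata $\mc_k$ with $k=1,2$) and Proposition \ref{coha} (the stratum $\mc_{\frac d2}$), exactly matching your enumeration of the proper divisors of $p$ and $2p$. (Your closing aside that $d=p$ forces $l=d$ is not quite accurate—factors of rank $\geq 2$ can occur on a line—but this is immaterial, since Proposition \ref{ckot} bounds all of $\mc_1$ and $\mc_2$ irrespective of the stratification.)
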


We expect Proposition \ref{prime} holds for all $d$, but still at the moment we only have a much weaker result for other $d$ coming later.

Look back to the filtration in Proposition \ref{gck}.  The maps $f_F^i:Q_i(-C)\hookrightarrow Q_{i-1}$ are all injective but not surjective in general.  Let $\Sigma_i:=F/F_{i-1}$, then $Q_{i}$ is a subsheaf of $\Sigma_i$ and $\delta_C^{l-i+1}\cdot \Sigma_i=0$.  Let $\Pi_i$ be the image of $f^{i+1}_F$ inside $Q_i$.  By the definition of $f_F^i$, one can easily see that $\delta_C^{l-i}\cdot(\Sigma_i/\Pi_i)=0$.  Hence $\Sigma_i/\Pi_i$ is actually supported at $(l-i)C$ and it is just $F\otimes\mo_{(l-i)C}$. 

\begin{prop}\label{bcd}Let $F\in\mc_k$ and let $C$ be the reduced curve in $Supp(F)$, then there is a filtration of $F$ 
\[0=F^0\subsetneq F^1\subsetneq\cdots\subsetneq F^m=F,\]
such that $R_i:=F^{i}/F^{i-1}$ are sheaves on $C$ with rank $t_i$.  $\sum t_i=\frac{d}k$, and moreover there are surjections $g^i_F:R_{i}(-C)\twoheadrightarrow R_{i-1}$ induced by $F$ for all $2\leq i\leq m$.  $R_i$ are not necessarily torsion free.
\end{prop}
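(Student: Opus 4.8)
The plan is to build the filtration of Proposition \ref{bcd} as the \emph{image} (or $\delta_C$-adic) filtration of $F$ along $C$, which is the natural dual of the \emph{kernel} filtration $F_i=(0:_F\delta_C^i)$ underlying Proposition \ref{gck}. Let $\delta_C$ be the irreducible equation of $C$, regarded as a section of $\mo_{\p^2}(C)$, so that multiplication by $\delta_C$ is a morphism $F(-C)\ra F$; let $m$ be the least integer with $\delta_C^m\cdot F=0$ (finite since $F\in\mc_k$ is supported on $\frac dk C$, and $\delta_C^{m-1}F\neq 0$). I would then set $F^i:=\delta_C^{\,m-i}\cdot F\subseteq F$ for $0\le i\le m$, so that $F^0=0$, $F^m=F$, and $R_i:=F^i/F^{i-1}=\delta_C^{\,m-i}F/\delta_C^{\,m-i+1}F$.

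First I would record that each $R_i$ is annihilated by $\delta_C$, because $\delta_C\cdot\delta_C^{m-i}F=\delta_C^{m-i+1}F$, hence is a coherent sheaf on $C$; it is locally free at the generic point of $C$, so it has a well-defined rank $t_i$, but globally it need not be torsion-free (torsion at closed points of $C$ can occur). Next I would produce the surjections: multiplication by $\delta_C$ maps $(\delta_C^{m-i}F)(-C)$ onto $\delta_C^{m-i+1}F$ and carries the subsheaf $(\delta_C^{m-i+1}F)(-C)$ onto $\delta_C^{m-i+2}F$; passing to the cokernels of these two inclusions yields a surjection $g_F^i\colon R_i(-C)\twoheadrightarrow R_{i-1}$ for $2\le i\le m$, manifestly induced by $F$. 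Strictness of the inclusions $F^{i-1}\subsetneq F^i$ (equivalently $R_i\neq0$, i.e. $t_i\ge1$) will follow from the generic-point count below.

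The remaining point is the rank identity $\sum_i t_i=\frac dk$. Here I would localize at the generic point $\eta$ of $C$: since $C$ is integral, $\mo_{\p^2,\eta}$ is a DVR with uniformizer $\delta_C$, so $F_\eta\cong\bigoplus_j \mo_{\p^2,\eta}/(\delta_C^{a_j})$ with $m=\max_j a_j$; then $t_i=\#\{j:a_j\ge m-i+1\}$, whence $\sum_{i=1}^m t_i=\sum_j a_j$, which is the multiplicity of $F$ along $C$. Since $C$ is the only component of $Supp(F)$, we have $c_1(F)=\big(\sum_j a_j\big)[C]$, i.e. $dH=\big(\sum_j a_j\big)kH$, giving $\sum_j a_j=\frac dk$ as desired.

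I do not anticipate a serious obstacle: the argument runs parallel to Proposition \ref{gck}, only replacing the kernel filtration by the image filtration, which automatically converts the injections $f_F^i$ into surjections $g_F^i$; one could alternatively try to deduce it by dualizing Proposition \ref{gck} through Remark \ref{duke}, but that route is messier since the cokernels of the $f_F^i$ are not $0$-dimensional. The one place needing a little care is the double surjectivity statement that makes each $g_F^i$ well defined and onto (rather than merely nonzero), together with keeping track of the twist by $\mo_{\p^2}(-C)$; the bookkeeping identity $t_i=r_{m-i+1}$ relative to the ranks $r_i$ of Proposition \ref{gck} (the same multiset in reverse order) is a convenient consistency check.
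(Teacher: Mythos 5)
Your proof is correct and is essentially the paper's argument: the paper defines $F^{m-1}$ as the kernel of $F\twoheadrightarrow F\otimes\mo_C$ and inducts, which is exactly your image filtration $F^i=\delta_C^{m-i}\cdot F$, and in both cases the surjections $g^i_F$ come from multiplication by $\delta_C$ (the paper phrasing it via the $Tor^1$ sequence from tensoring with $\mo_C$). Your generic-point computation of the ranks just makes explicit what the paper leaves to the reader.
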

\begin{proof}We choose $F^{m-1}$ to be the kernel of the map $F\twoheadrightarrow F\otimes\mo_C$, and hence $R_m\cong F\otimes\mo_C$.  $F^{m-1}$ is the quotient of $F\otimes\mo_{(m-1)C}(-C)$ module the image of $Tor^1(F,\mo_C)$,  hence we have a surjective map $g^m_F:R_m(-C)\twoheadrightarrow R_{m-1}:=F^{m-1}\otimes \mo_C$. We then get the proposition by induction.
\end{proof}

Compare the two filtrations given in Proposition \ref{gck} and Proposition \ref{bcd} and we have the following lemma.
\begin{lemma}\label{cop}Let $(l,r_i)$ and $(m,t_i)$ be as in Proposition \ref{gck} and Proposition \ref{bcd} respectively.  Then we have 

(1) $l=m$;

(2) $r_i=t_{m-i+1}$.
\end{lemma}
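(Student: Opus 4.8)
# Proof Proposal for Lemma 3.14

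The plan is to show that the two filtrations from Proposition \ref{gck} and Proposition \ref{bcd} are mirror images of each other by recognizing that both are canonical constructions governed by multiplication by $\delta_C$. The key observation is that in Proposition \ref{gck}, $F_i$ is exactly the subsheaf of $F$ annihilated by $\delta_C^i$, i.e. $F_i = \ker(\delta_C^i \colon F \to F(iC))$; this follows from the inductive construction, since $F_1$ is the annihilator of $\delta_C$ and $F_{i}/F_1$ is the $\delta_C^{i-1}$-annihilator in $F/F_1$. Dually, in Proposition \ref{bcd}, $F^{m-i}$ is exactly $\delta_C^i \cdot F$ viewed as a subsheaf of $F$ (more precisely the image of $\delta_C^i \colon F(-iC) \to F$), because $F^{m-1} = \ker(F \twoheadrightarrow F\otimes\mo_C) = \delta_C \cdot F$ and the construction iterates. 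So first I would establish these two intrinsic descriptions: $F_i = \mathrm{Ann}_{\delta_C^i}(F)$ and $F^{m-j} = \delta_C^{j}\cdot F$, where here I am identifying a twist of $F$ with its image, which is harmless as $C$ is integral and $F$ is pure.

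Second, with these descriptions in hand, I would note that since $\delta_C^l \cdot F = 0$ and $\delta_C^{l-1}\cdot F \neq 0$ (this $l$ is the one from Proposition \ref{gck}), we get $F^{0} = \delta_C^{l}\cdot F = 0$ forces the length $m$ of the second filtration to satisfy $m = l$: indeed $\delta_C^{l}\cdot F = 0$ and $\delta_C^{l-1}\cdot F \neq 0$ translate directly into $F^{0}=0$ but $F^{1}\neq 0$ in a filtration of length $l$. This gives part (1). Alternatively, and perhaps more cleanly, since both filtrations compute $\sum r_i = \sum t_i = d/k$ and the ranks $r_i$ are determined by the jumps in the $\delta_C$-torsion filtration of $F$, one sees directly that the two sets of ranks coincide up to the claimed reindexing. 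I would probably pick whichever of these two routes reads more smoothly, but the annihilator/image description makes the first route essentially immediate.

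Third, for part (2), I would compare factors directly using the intrinsic descriptions. We have $Q_i = F_i/F_{i-1} = \mathrm{Ann}_{\delta_C^i}(F)/\mathrm{Ann}_{\delta_C^{i-1}}(F)$, and $R_j = F^{j}/F^{j-1} = (\delta_C^{m-j}\cdot F)/(\delta_C^{m-j+1}\cdot F)$. Multiplication by $\delta_C^{i-1}$ induces a map $\mathrm{Ann}_{\delta_C^i}(F) \to \delta_C^{i-1}\cdot F$ with kernel $\mathrm{Ann}_{\delta_C^{i-1}}(F)$, hence an injection $Q_i \hookrightarrow (\delta_C^{i-1}\cdot F)/(\delta_C^{i}\cdot F) = R_{m-i+1}$ after matching indices $m-j = i-1$. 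To upgrade this to an equality — and hence in particular to equality of ranks $r_i = t_{m-i+1}$ — I would check that the map is surjective onto the torsion-free part, or simply that both sides have the same rank because $\delta_C \colon F(-C) \to F$ restricted to appropriate subquotients is generically an isomorphism on the torsion-free quotients (it has a $0$-dimensional kernel and cokernel since $C$ is integral and the relevant sheaves are supported on $C$). This rank-matching is all that is needed for the statement as written, since the lemma only claims $r_i = t_{m-i+1}$, not $Q_i \cong R_{m-i+1}$.

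The main obstacle I anticipate is bookkeeping with the twists by $\mo_X(-C)$ and making the identifications of $\delta_C^j \cdot F$ with a genuine subsheaf of $F$ (versus a subsheaf of $F(jC)$) fully rigorous, together with verifying that multiplication by $\delta_C$ genuinely induces isomorphisms on the torsion-free parts of the graded pieces rather than merely injections with $0$-dimensional cokernel — though for the rank statement even the weaker injection suffices, so this obstacle is largely cosmetic. I would therefore keep the argument at the level of: identify both filtrations with the canonical $\delta_C$-power filtration (from above for one, from below for the other), observe they have the same length, and read off $r_i = t_{m-i+1}$ from the fact that $\mathrm{gr}^i$ of the ascending annihilator filtration maps onto (a finite-colength subsheaf of) $\mathrm{gr}^{m-i+1}$ of the descending image filtration.
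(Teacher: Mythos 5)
Your intrinsic descriptions of the two filtrations are correct ($F_i=\ker(\delta_C^i\colon F\to F(iC))$ and $F^{m-j}=\delta_C^j\cdot F$), and your argument for part (1) is fine — it is exactly the paper's one-line argument that both lengths equal the minimal power of $\delta_C$ annihilating $F$. The gap is in part (2): multiplication by $\delta_C^{i-1}$ does embed $Q_i$ into $\delta_C^{i-1}F$, but it does \emph{not} induce an injection $Q_i\hookrightarrow R_{m-i+1}=(\delta_C^{i-1}F)/(\delta_C^{i}F)$, because a section of $\mathrm{Ann}_{\delta_C^{i}}(F)$ not killed by $\delta_C^{i-1}$ may be sent into $\delta_C^{i}F$. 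Concretely, take $F=\mo_{3C}\oplus\mo_C$ (pure, reduced support $C$, $l=m=3$): then $F_1=\delta_C^2\mo_{3C}\oplus\mo_C$, $F_2=\delta_C\mo_{3C}\oplus\mo_C$, so $Q_2\cong\delta_C\mo_{3C}/\delta_C^2\mo_{3C}$ has rank $1$, while every representative $\delta_C s$ of a class in $Q_2$ is sent by $\delta_C$ into $\delta_C^2F$; hence the induced map $Q_2\to R_2$ is identically zero although both sides have rank $1$. So the inequality $r_i\le t_{m-i+1}$ you want to extract from this map is not established. Your fallback sentence is also not a proof as stated: $\delta_C\colon F(-C)\to F$ has kernel $F_1(-C)$ and cokernel $F\otimes\mo_C$, both $1$-dimensional, and the example shows the induced maps on graded pieces can fail to be injective even at the generic point of $C$, so "generically an isomorphism on torsion-free quotients" is precisely what needs an argument.

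The approach can be repaired, and then it differs genuinely from the paper: both filtrations commute with localization at the generic point $\eta$ of $C$, and the local ring of $\p^2$ at $\eta$ is a discrete valuation ring with uniformizer $\delta_C$; writing $F_\eta\cong\bigoplus_j\mo_\eta/(\delta_C^{a_j})$ by the structure theorem, one computes directly $r_i=\#\{j\,:\,a_j\ge i\}=t_{m-i+1}$, which is the lemma (ranks on the integral curve $C$ are read off at $\eta$). The paper instead avoids localization: it argues by induction on the length, using $F/\Pi_1\cong F\otimes\mo_{(m-1)C}$ (with $\Pi_1$ the image of $f^2_F$ in $Q_1$) to get $t_m=r_1$, passing to the torsion-free quotient of $F/\Pi_1$, and closing with $\sum r_i=\sum t_i$. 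Either route works, and yours is arguably shorter once the DVR step replaces the false injection; but as written the central step of your part (2) is incorrect.
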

\begin{proof}Statement (1) is trivial, since both $m$ and $l$ are the minimal power of $\delta_C$ to annihilate $F$.

We first prove Statement (2) for $l=2$.  Recall that we denote by $\Pi_1$ the image of $f^2_F$ inside $F_1$, and $F/\Pi_1\cong F\otimes \mo_{(l-1)C}$.  Hence for $l=2$ $F/\Pi_1\cong F\otimes\mo_C\cong R_2$.  Hence $t_2=r_2+r_1-r_2=r_1$ and $t_1=r_2$.

Let $l\geq 3$.  Take the torsion free quotient $\widetilde{F}$ of $F/\Pi_1$ and we have $\widetilde{r}_1=r_2+r_1-r_2=r_1$, $\widetilde{r}_i=r_{i+1}$ for $i>1$, and $\widetilde{t}_{m-i}=t_{m-i+1}$ for $i\geq 1$.  Hence by induction assumption, we have $r_1=t_m$, $r_{i+1}=\widetilde{r}_{i}=\widetilde{t}_{m-1-i+1}=t_{m-i+1}$ for $i\geq 2$.  We then have $r_2=t_{m-1}$ because $\sum r_i=\sum t_i$.  Hence the lemma.  
\end{proof}
\begin{defn}\label{uplow}We call the filtration given in Proposition \ref{gck} \textbf{the lower filtration of $F$} while the one given in Proposition \ref{bcd} \textbf{the upper filtration of $F$}.
\end{defn}
\begin{rem}We did not use the assumption that the surface is $\p^2$ in Proposition \ref{gck}, Proposition \ref{bcd} and Lemma \ref{cop}.  Hence they apply to any surface.
\end{rem}
%Recall that $\mm(d,\chi)$ is the moduli space of stable sheaves with parameter $(d,\chi)$.  
Define $\mm(d,\chi)\supset \mt_n:=\{F~|~\exists ~x\in \p^2, s.t.~dim_{k(x)}(F\otimes k(x))\geq n\},$ where $k(x)$ is the residue field of $x$.  In other words, $\mt_n$ is the substack parametrizing sheaves with fiber dimension $\geq n$ at some points. 
\begin{rem}\label{fib}For a sheaf $F$ with filtration in Proposition \ref{gck} or Proposition \ref{bcd}, let $n_0=r_1=t_m$, then we have $F\in\mt_{n_0}$.
\end{rem}
\begin{prop}\label{ttc}For $n\geq 2$, $\mt_n$ is of codimension $\geq n^2-2$ in $\mm(d,\chi)$.
\end{prop}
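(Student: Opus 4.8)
The plan is to stratify $\mt_n$ by the support of the sheaf and, on each integral component, to use the upper (or lower) filtration of Proposition \ref{bcd} together with the observation in Remark \ref{fib} that a sheaf with fiber dimension $\geq n$ at some point must have $n_0=r_1=t_m\geq n$. First I would reduce to the case where the support is non-reduced of the form $\frac{d}{k}C$ with $C$ integral of degree $k$ and $\frac{d}{k}\geq n$: on the locus of reduced (or reducible) supports the fiber dimension at a point is controlled by the multiplicity of the curve there, so having fiber dimension $\geq n$ forces the support curve to be singular enough (or reducible enough) that Lemma \ref{codr}, Proposition \ref{dlt} and the multiplicity count already give codimension much larger than $n^2-2$. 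So the essential case is $F\in\mc_k^{r_1,\dots,r_l}$ with $r_1\geq n$.

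Next, fixing such a stratum, I would run the same extension-counting argument used throughout Section 3. Write $F$ via its upper filtration $0=F^0\subsetneq\cdots\subsetneq F^m=F$ with factors $R_i$ of rank $t_i$ on $C$, subject to the surjections $g^i_F\colon R_i(-C)\twoheadrightarrow R_{i-1}$. The moduli of the data $(C,\{R_i\},\{g^i_F\},\{\text{extension classes}\})$ maps onto the stratum, and by the usual bookkeeping (as in Lemma \ref{rko}, (\ref{estg})–(\ref{esff})) its dimension is bounded by $\dim|kH|$ plus the sum of the dimensions of the compactified Jacobian-type parameter spaces for the $R_i$, plus the $\text{Ext}^1$ contributions between consecutive pieces, minus the $\text{Hom}$ terms, with the $\text{Ext}^2$ terms estimated via Serre duality on the Gorenstein curve $C$ exactly as in Proposition \ref{coha}. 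The key point is that because $t_m=r_1\geq n$, the ``top'' factor $R_m\cong F\otimes\mo_C$ is a torsion-free sheaf of rank $\geq n$ on $C$, and the constraint that it receives a surjection from $R_{m-1}(-C)$ (equivalently, the lower-filtration constraint $f_F^i\colon Q_i(-C)\hookrightarrow Q_{i-1}$) rigidifies the configuration: the rank-$t_i$ pieces cannot move independently. Carrying the arithmetic through, the dimension of $\mc_k^{r_1,\dots,r_l}$ drops below $d^2$ by an amount that grows quadratically in $n_0=r_1$; tracking constants one finds the deficiency is at least $n^2-2$, with the extremal case being $k=1$, $l=1$, $F$ a rank-$d$ torsion-free sheaf on a smooth plane curve degenerating so that $n$ sections collide at a point — there the count is a direct Brill–Noether–type estimate on the curve.

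I would organize the computation by first treating $l=1$ (a single torsion-free sheaf of rank $\frac{d}{k}\geq n$ on an integral curve $C$ of degree $k$) as the base case, where $\mt_n\cap\mc_k^{(d/k)}$ is cut out inside the relative compactified Jacobian over $|kH|^o$ by the condition that some fiber has dimension $\geq n$, an incidence condition of the expected codimension roughly $n^2-1$ on the Jacobian (hence $\geq n^2-2$ after accounting for the automorphism/stack shift); then induct on $l$ using (\ref{esg}) and the fact that any nonzero map $F^{m-1}(3)\to R_m$ factors through the $\delta_C$-annihilated part, exactly as in the proof of Lemma \ref{rko}. The main obstacle I expect is the base case: getting a sharp enough codimension bound for the locus of torsion-free sheaves of rank $r\geq n$ on a (possibly singular) integral plane curve that have an $n$-dimensional fiber somewhere — on a smooth curve this is classical, but over the whole linear system $|kH|$ one must control how this jumping locus behaves as the curve acquires singularities, and make sure the gain from the singular curves having smaller moduli compensates any loss. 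Once that estimate is in hand, the inductive step and the reduction to the non-reduced case are routine variations on arguments already established in Section 2 and Section 3.
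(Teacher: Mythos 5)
There is a genuine gap here: your proposal is a programme rather than a proof, and both of its load-bearing steps are left unestablished. The paper does not argue this way at all — it observes that the Quot scheme $\Omega(d,\chi)$ is a $PGL(V)$-bundle $\sigma:\Omega(d,\chi)\ra M(d,\chi)$ over the coarse moduli space, invokes Le Potier's Lemma 3.2 in \cite{lee} (which bounds the codimension of $\sigma^{-1}(T_n)$ in $\Omega(d,\chi)$ by $n^2-2$), and descends the estimate using $PGL(V)$-invariance. That is a two-line argument resting on an external result, whereas you propose to reprove the bound by stratifying by support and running the extension-counting machinery of Section 3; the decisive inequality (``tracking constants one finds the deficiency is at least $n^2-2$'') is asserted, not derived, and you yourself flag the base case — a Brill--Noether-type bound for the jumping locus of fiber dimension of torsion-free sheaves over the whole linear system, including singular curves — as an unresolved obstacle.

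The reduction step is also not sound as stated. The locus $\mt_n$ is \emph{not} essentially contained in the non-reduced or reducible support loci: a rank~$1$ torsion-free sheaf on an \emph{integral} plane curve can have fiber dimension $m$ at a point of multiplicity $m$ (e.g.\ over the local ring of $y^m=x^{m+1}$ the module $(t^m,\dots,t^{2m-1})$ needs $m$ generators), so $\mt_n\cap\mn(d,\chi)$ is nonempty and must be handled; Remark \ref{fib} only gives the implication $r_1=n_0\Rightarrow F\in\mt_{n_0}$, not a converse. Moreover, on the reducible and non-reduced loci the bounds you cite (Lemma \ref{codr}, Proposition \ref{dlt}) give codimension $\geq d-1$, which is \emph{not} ``much larger than $n^2-2$'': since $n$ can be as large as roughly $d$, one can have $n^2-2$ of order $d^2$ while $d-1$ is only linear in $d$. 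So even granting the unproven base case, the proposed reduction does not close the argument. If you want a self-contained proof you would need to carry out the fiber-dimension incidence estimate uniformly over all strata, which is essentially reproving Le Potier's lemma; citing it, as the paper does, is the efficient route.
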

\begin{proof}Recall that we have a coarse moduli space $M(d,\chi)$ as a scheme.  We denote $T_n$ the image of $\mt_n$ in $M(d,\chi)$.  This proposition is equivalent to say that $T_n$ is of codimension $\geq n^2-2$ in $M(d,\chi)$, which in fact follows straightforward after Le Potier's argument in proving Lemma 3.2 in \cite{lee}.   

We know that there is a Qout-scheme $\Omega(d,\chi)$ such that $\sigma:\Omega(d,\chi)\ra M(d,\chi)$ is a $PGL(V)$-bundle.  By Le Potier's result in \cite{lee}, the preimage $\sigma^{-1}(T_n)$ of $T_n$ is a closed subscheme of codimension $\geq n^2-2$ in $\Omega(d,\chi)$.  It is easy to see that $\sigma^{-1}(T_n)$ is invariant under the $PGL(V)$-action, hence the proposition.    
\end{proof}

By Proposition \ref{ttc} we know that $\mt_3$ is of codimension $\geq 7$.  

Let $\mt_n^o=\mt_n-\mt_{n+1}$.  
\begin{thm}\label{tt}$\mt_2^o\cap \mc_k$ is of codimension $\geq d-1$ in $\mm(d,\chi)$.
\end{thm}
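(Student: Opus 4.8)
The plan is to stratify $\mt_2^o\cap\mc_k$ according to the rank profile of the lower filtration of Proposition~\ref{gck}, to squeeze a rank~$2$ bundle out of the $\mt_2^o$-hypothesis, and then to run a dimension count of the kind used in Proposition~\ref{dlt} and Lemma~\ref{rko}. Write $n=\frac dk$, so a sheaf $F\in\mc_k$ has a lower filtration with torsion-free factors $Q_1,\dots,Q_l$ on the integral curve $C\in|kH|^o$, of ranks $r_1\ge\cdots\ge r_l\ge1$ with $\sum r_i=n$. By Remark~\ref{fib}, $F\in\mt_{r_1}$, so $\mc_k^{r_1,\dots,r_l}$ meets $\mt_2^o=\mt_2-\mt_3$ only when $r_1\le2$; and when $r_1=1$ the whole stratum $\mc_k^{1,\dots,1}$ already has codimension $\ge d-1$ by Lemma~\ref{rko}. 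Since $k=1,2$ and $k=\tfrac d2$ are covered by Propositions~\ref{ckot} and \ref{coha}, and since tensoring by $\mo_{\p^2}(1)$ matches the relevant strata for $(d,\chi)$ and $(d,\chi+d)$, one may assume $k\ge3$, $n\ge3$ and $|\chi|$ bounded (say $-d\le\chi<0$). It then suffices to bound $\dim\bigl(\mt_2^o\cap\mc_k^{2^s,1^{n-2s}}\bigr)$ for each $s$ with $1\le s\le\tfrac n2$.

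\emph{The structural step.} Fix $s$ and $F$ in this stratum. For $x\in C$ one has $F\otimes k(x)=(F\otimes\mo_C)\otimes k(x)$ and $F\otimes\mo_C$ has rank~$2$ on $C$, so $F\notin\mt_3$ is equivalent to $F\otimes\mo_C$ being locally free of rank~$2$ on $C$. Feeding this into the upper filtration of Proposition~\ref{bcd} (whose top $s$ factors have rank~$2$, by Lemma~\ref{cop}), each surjection $g^i_F$ between rank~$2$ factors must be an isomorphism --- its kernel is a rank~$0$ subsheaf of a locally free sheaf --- so those factors are the twists $E(-jC)$ of $E:=F\otimes\mo_C$, and a local analysis of $\mo_{\p^2,x}/\delta_C^{\,s}$-modules then shows that $E':=F\otimes\mo_{sC}$ is locally free of rank~$2$ on the scheme $sC\in|skH|$. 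With $G:=\delta_C^{\,s}F=\ker(F\twoheadrightarrow E')$ we get
\[0\to G\to F\to E'\to0,\]
where $E'$ is a rank~$2$ bundle on $sC$ and $G$ is supported on $(n-2s)C$ with all lower-filtration ranks equal to~$1$; i.e.\ $G$ is an ``all ones'' sheaf at degree $(n-2s)k$ in the sense of Lemma~\ref{rko} (or rank~$1$ torsion free on $C$ when $n-2s=1$, or $0$ when $n=2s$).

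\emph{The dimension count.} Since $F$ recovers $C$, $E'$, $G$ and the class of the extension, $F\mapsto(C,E',G,[\mathrm{ext}])$ realises $\mt_2^o\cap\mc_k^{2^s,1^{n-2s}}$ inside $\mext^1(E',G)$ fibred over (rank~$2$ bundles on $sC$)$\,\times\,$(``all ones'' sheaves at degree $(n-2s)k$), with $C$ varying in $|kH|^o$. Hence, with $a=sk$ and $c=(n-2s)k=d-2a$,
\[\dim\bigl(\mt_2^o\cap\mc_k^{2^s,1^{n-2s}}\bigr)\ \le\ \Bigl[\tbinom{k+2}{2}-1\Bigr]+4\Bigl(\tbinom{sk-1}{2}-1\Bigr)+\bigl[c^2-(c-1)\bigr]+\dim\mext^1(E',G),\]
using $\dim|kH|=\binom{k+2}{2}-1$, $\dim\mathrm{Bun}_2(sC)=4(\binom{sk-1}{2}-1)$ (as $p_a(sC)=\binom{sk-1}{2}$), and $\dim\{G\}\le c^2-(c-1)$ by Lemma~\ref{rko} (only smaller when $n-2s\le1$); any stack-theoretic $O(1)$ correction here is negative. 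The argument of \eqref{exi} gives $\dim\mext^1(E',G)\le-\chi(E',G)+\dim\text{Ext}^2(E',G)=2ac+\dim\text{Hom}(G,E'(-3))$, the last step by Serre duality on $\p^2$ and $\chi(E',G)=-\langle2aH,cH\rangle=-2ac$. The first three bracketed terms plus $2ac$ sum to $d^2+\tfrac{k^2+3k}2+2a^2-2ad-4a-d+1$, so the target bound $\le d^2-(d-1)$ reduces to
\[\dim\text{Hom}(G,E'(-3))\ \le\ 2a(d-a)+4a-\tfrac{k^2+3k}2 .\]
To prove this one filters $G$ by its rank~$1$ torsion-free layers $W_i$ on $C$; any homomorphism $W_i\to E'(-3)$ lands in a rank~$2$ locally free sub/quotient $V$ of $E'(-3)$ on a thickening of $C$, so $\dim\text{Hom}(G,E'(-3))\le\sum_i\dim\text{Hom}_C(W_i,V)$. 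The hypothesis $\chi(F')\le a$ for $F'\subset F$ (or semistability) confines $\chi(E')$, $\chi(G)$, the degrees of the $W_i$, and the degree and imbalance of $V$ into windows of width $O(a)$, and a Clifford/degree estimate on the integral Gorenstein curve $C$ (as in Propositions~\ref{coha} and \ref{rko}), together with the decay of the summands as $\deg W_i$ grows, then yields the bound for all admissible $s$, $k\ge3$, $n\ge3$. The case $n=2s$ ($G=0$, $F=E'$ a rank~$2$ bundle on a degree-$\tfrac d2$ curve) follows from the same count with the $\text{Ext}^2$ term dropped.

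\emph{Main obstacle.} The two delicate points are: (i) upgrading ``$F\otimes\mo_C$ locally free of rank~$2$'' to ``$F\otimes\mo_{sC}$ locally free of rank~$2$ on the non-reduced scheme $sC$'', which needs a careful local study of $\mo_{\p^2,x}/\delta_C^{\,s}$-modules and of the interaction of the two filtrations of Definition~\ref{uplow}; and (ii) making the estimate of $\dim\text{Hom}(G,E'(-3))$ sharp enough for every admissible $s$ --- a bare degree count loses a factor, and one genuinely needs both the boundedness windows produced by $(C_1)$/semistability and the tapering of $\dim\text{Hom}_C(W_i,V)$ as the $W_i$ become more positive. This is presumably why the complete argument is deferred to the appendix.
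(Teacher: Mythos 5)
Your reduction is genuinely different from the paper's and, at the structural level, sound: the paper never forms the single extension $0\to G\to F\to E'\to 0$ with $E'=F\otimes\mo_{sC}$ locally free on the thickening $sC$; instead it runs an induction on the number $\ell(1)$ of rank-one factors, peeling off the bottom factor $Q'_1$ of the lower filtration one step at a time and bounding $\text{Ext}^2(F/Q'_1,Q'_1)$ at each step. Your observation that the rank-two part of the upper filtration assembles into a locally free $\mo_{sC}$-module (the surjections between locally free rank-two factors are isomorphisms, and a Nakayama/length argument over $\mo_{\p^2,x}/\delta_C^{\,s}$ does give local freeness of $F\otimes\mo_{sC}$) is correct and would, if completed, give a cleaner two-block decomposition than the paper's factor-by-factor induction. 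Your bookkeeping of $\chi(E',G)=-2ac$, the Serre-duality identification $\text{Ext}^2(E',G)^\vee\cong\text{Hom}(G,E'(-3))$, and the reduction of the target to a single inequality are all done correctly.

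The gap is that the inequality everything reduces to, $\dim\text{Hom}(G,E'(-3))\le 2a(d-a)+4a-\tfrac{k^2+3k}2$, is asserted rather than proved, and it is precisely here that the entire difficulty of the theorem is concentrated. A bare degree count on $\mathcal{H}om_C(W_i,V)$ does not close the estimate: one must first show that the $(C_1)$/stability hypotheses pin $\chi(E')$, $\chi(G)$ and the degrees of the layers $W_i$ into windows that are narrow enough, and this is exactly the content of the long chains of inequalities in the paper's appendix (the sheaves $P_i,K_i,B_i,L_i$ and the bounds of type (\ref{kb})--(\ref{drk}) and (\ref{gkcc})--(\ref{kqd}) exist only to produce such windows; note that stability must be applied not to $G$ and $E'$ directly but to auxiliary subsheaves such as the maximal $\delta_C$-annihilated subsheaf of $F$, which mixes the two blocks). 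Until you carry this out uniformly in $s$, $k$ and $n$, the codimension bound is not established. Two smaller points: your claim that $\dim\{G\}\le c^2-(c-1)$ is ``only smaller when $n-2s\le1$'' is backwards for $n-2s=1$, where the parametrizing stack is $\mn(k,\chi')$ of dimension $c^2$, so that stratum costs you an extra $c-1$ which must be absorbed by the Hom bound; and you add $\dim|kH|$ on top of a bound for $\{G\}$ that already includes the choice of $C$, which is harmless (it only weakens your upper bound) but means the inequality you must prove is needlessly tighter by $\tfrac{k^2+3k}2$.
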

\begin{proof}The proof is too long and moved to Appendix A.
\end{proof}
Finally we get an estimate of dimension for other $d$ as follows.
\begin{prop}\label{gdw}For $d\neq p,2p$ with $p$ a prime number,  the complement of $\mn(d,\chi)$ inside $\mm_{\bullet}^a(d,\chi)$ is of codimension $\geq 7$. 
\end{prop}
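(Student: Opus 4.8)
The plan is to decompose the complement $\mc^a(d,\chi) = \mm^a_\bullet(d,\chi) - \mn(d,\chi)$ into the already-analyzed pieces and apply the codimension bounds collected so far, downgrading the target bound from $d-1$ to $7$ only where forced. Recall from Section 2 that $\mc^a(d,\chi) = \mc^a_r(d,\chi) \cup \mc^a_n(d,\chi)$, where $\mc^a_r(d,\chi) = \pi^{-1}(\ls_r)$ parametrizes sheaves with reducible support and $\mc^a_n(d,\chi) = \pi^{-1}(\ls_n)$ parametrizes sheaves with irreducible non-reduced support $\frac{d}{k}C$, $C\in|kH|^o$. By Lemma \ref{codr}, $\mc^a_r(d,\chi)$ is of codimension $\geq d-1 \geq 7$ already once $d \geq 8$ (and the cases $d<8$ with $d\neq p,2p$ are vacuous or trivial to check directly, e.g. $d=1$). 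So the whole problem reduces to bounding the codimension of $\mc^a_n(d,\chi) = \coprod_{k\mid d}\mc_k$.

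Next I would dispose of the small $k$ and the small reduced-support cases. By Proposition \ref{ckot}, $\mc_1$ and $\mc_2$ have codimension $\geq d-1$, and by Proposition \ref{coha} so does $\mc_{d/2}$; together these cover all $k$ with $k\in\{1,2\}$ or $d/k\in\{1,2\}$, i.e. every $\mc_k$ except those with $3 \le k$ and $3 \le d/k$, so that $d\ge 9$ and the multiplicity $l = d/k$ satisfies $l\ge 3$. For such a remaining $\mc_k$, use the stratification (\ref{stmc}) by the ranks $(r_1,\dots,r_l)$ of the factors in the lower filtration of Proposition \ref{gck}. The stratum $\mc_k^{1,\dots,1}$ has codimension $\geq d-1$ by Lemma \ref{rko}. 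For every other stratum $\mc_k^{r_1,\dots,r_l}$ we have $r_1 \ge 2$, and by Remark \ref{fib} (with $n_0 = r_1$) each such sheaf lies in $\mt_{r_1}\subseteq\mt_2$. Splitting further into $\mt^o_n\cap\mc_k$ for $n\ge 2$, we apply Theorem \ref{tt} to get codimension $\geq d-1$ on $\mt^o_2\cap\mc_k$, and Proposition \ref{ttc} to get codimension $\geq n^2-2 \ge 7$ on $\mt^o_n\cap\mc_k$ for $n\ge 3$ (using $\mt_n\supseteq\mt^o_n$). Taking the minimum over all these finitely many pieces, every stratum has codimension $\geq \min(d-1,7) = 7$.

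Assembling: $\mc^a(d,\chi)$ is a finite union of locally closed substacks — $\mc^a_r(d,\chi)$, the $\mc_k$ with $k\le 2$ or $d/k\le 2$, the strata $\mc_k^{1,\dots,1}$, the strata $\mt^o_2\cap\mc_k$, and the strata $\mt^o_n\cap\mc_k$ for $n\ge 3$ — and each has codimension $\geq 7$ in $\mm^a_\bullet(d,\chi)$ (here I use that $d\neq p,2p$ forces $d\ge 9$ whenever a stratum with $l\ge 3$, $k\ge 3$ actually occurs, so $d-1\ge 7$ throughout). Hence the union has codimension $\geq 7$, which is exactly the claim. I do not expect any genuine obstacle here: the proposition is essentially bookkeeping, collecting the estimates already proved; the only point requiring a line of care is checking that the degenerate small-$d$ values excluded by "$d\neq p,2p$" (namely $d=1$ and, trivially, nothing else below $9$) do not produce a stratum forcing codimension smaller than $7$, and that the finiteness statements (Proposition \ref{bound}, finiteness of rank tuples in (\ref{stmc})) legitimately reduce us to finitely many strata.
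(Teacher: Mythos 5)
Your assembly is essentially the one the paper intends: Proposition \ref{gdw} is stated without a written proof precisely because it is the bookkeeping combination of Lemma \ref{codr}, Propositions \ref{ckot} and \ref{coha}, Lemma \ref{rko}, Theorem \ref{tt} and Proposition \ref{ttc}, together with the observation (made in the remark following the proposition) that $d\neq p,2p$ forces $d-1\geq 7$ whenever these strata occur. Your reduction of the small-$d$ and small-$k$ cases and the use of the rank stratification (\ref{stmc}) with Remark \ref{fib} all match that intended argument.

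There is, however, one genuine gap in your covering. The stack $\mt_n$ is by definition a substack of $\mm(d,\chi)$, i.e.\ of the \emph{stable} locus, and both Theorem \ref{tt} and Proposition \ref{ttc} are codimension statements inside $\mm(d,\chi)$; their proofs use stability essentially (Le Potier's Quot-scheme argument over the coarse moduli space $M(d,\chi)$ for Proposition \ref{ttc}, and repeated use of stability and slope inequalities for quotients in Appendix A for Theorem \ref{tt}). Hence your assertion that every sheaf in a stratum $\mc_k^{r_1,\dots,r_l}$ with $r_1\geq 2$ ``lies in $\mt_{r_1}\subseteq\mt_2$'' is valid only for the stable members of that stratum: sheaves of $\mm^a_{\bullet}(d,\chi)$ with non-reduced support and fibre dimension $\geq 2$ that merely satisfy condition $(C_1)$, or are strictly semistable, are covered by none of the results you cite, so your list of locally closed pieces does not exhaust the complement of $\mn(d,\chi)$. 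The missing ingredient is Proposition \ref{dlt}: $\ts^a(d,\chi)=\mm_{\bullet}^a(d,\chi)-\mm(d,\chi)$ has codimension $\geq d-1\geq 7$ in the relevant range $d\geq 8$, so one first discards the non-stable locus and then runs your stratification inside $\mm(d,\chi)$, where Theorem \ref{tt} and Proposition \ref{ttc} legitimately apply (codimensions agree since $\dim\mm(d,\chi)=\dim\mm_{\bullet}^a(d,\chi)=d^2$). With that one extra citation your argument is complete and coincides with the paper's.
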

\begin{rem}A priori in Proposition \ref{gdw} the lower bound of the codimension should be $\min\{d-1,7\}$.  However when $d-1<7$, $d=p$ or $2p$ for some $p$ prime.  Hence $d-1\geq7$ for all the cases which Proposition \ref{gdw} applies to.
\end{rem}

%%%%%%%%%%%%%%%%%%%%%%%%%%%%%%%%%%%%%%%%%%%%%%%%%%%%%%%%%%%%%
\section{The main theorem.}  %%%%%%%%%%             Section 4       %%%%%%%%%%%%%%
We prove the main theorem in this section.  Recall that we denote by $\mh^n$ the stack associated to the Hilbert schemes $Hilb^{[n]}(\p^2)$ parametrizing ideal sheaves of colength $n$ on $\p^2$.  The strategy is to relate the moduli stack $\mm(d,\chi)$ with $\mh^n$ for some $n$.  First we have two lemmas as follows.
\begin{lemma}\label{quit}Let $J$ be any torsion free rank 1 sheaf on $\p^2$ such that $H^0(J)\neq 0$.  Then any nonzero element  $h_J\in H^0(J)$ gives a sequence
\[0\ra\mo_{\p^2}\xrightarrow{h_J} J\ra F_{h_J}\ra 0,\]
with $F_{h_J}$ pure of dimensional one.
\end{lemma}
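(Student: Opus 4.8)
The plan is to reduce the statement to two standard facts: the classification of rank one torsion free sheaves on $\p^2$, and the fact that the structure sheaf of a plane curve is Cohen--Macaulay, hence pure of dimension one. Everything else is bookkeeping with the obvious inclusions.

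First I would recall the structure of $J$. Since $\p^2$ is a smooth surface, $J^{\vee\vee}$ is a line bundle, say $\mo_{\p^2}(e)$, and the canonical morphism $J\ra J^{\vee\vee}$ is injective with zero dimensional cokernel; hence $J\cong I_Z(e)$ for some zero dimensional subscheme $Z\subset\p^2$ (possibly empty). The hypothesis $H^0(J)\neq0$, together with $H^0(J)\hookrightarrow H^0(\mo_{\p^2}(e))$, forces $e\geq0$. If $e=0$ then $Z=\emptyset$ and $J\cong\mo_{\p^2}$, in which case every nonzero section is a nonzero scalar and $F_{h_J}=0$; I would dispose of this degenerate case at the outset (the claim then being vacuous, or the lemma understood with $J\not\cong\mo_{\p^2}$). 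From here on $e\geq1$.

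Next, fix a nonzero $h_J\in H^0(J)\subset H^0(\mo_{\p^2}(e))$ and view it as a nonzero form of degree $e$ cutting out a plane curve $C$ of degree $e$. The key point is that the composite $\mo_{\p^2}\xrightarrow{h_J}J\hookrightarrow\mo_{\p^2}(e)$ is just multiplication by this form, hence injective; therefore $h_J\colon\mo_{\p^2}\ra J$ is injective, $F_{h_J}:=J/\mo_{\p^2}$ is a rank zero sheaf, and the nested inclusions $\mo_{\p^2}\hookrightarrow I_Z(e)\hookrightarrow\mo_{\p^2}(e)$ give a short exact sequence
\[0\ra F_{h_J}\ra \mo_C(e)\ra \mo_Z(e)\ra 0,\]
using that the cokernel of multiplication by the form $h_J$ on $\mo_{\p^2}\hookrightarrow\mo_{\p^2}(e)$ is $\mo_C(e)$.

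Finally I would conclude: $\mo_C=\mo_{\p^2}/(h_J)$ is a hypersurface of positive degree in $\p^2$, hence a Cohen--Macaulay scheme of pure dimension one, so $\mo_C(e)$ is a pure sheaf of dimension one, and any nonzero subsheaf of a pure sheaf is again pure of the same dimension. Since $F_{h_J}$ is a nonzero subsheaf of $\mo_C(e)$ (nonzero because $\mo_Z(e)$ has dimension zero while $\mo_C(e)$ has dimension one), it is pure of dimension one, as claimed. I do not expect any real obstacle here: the content is entirely structural. The only points deserving care are the degenerate case $J\cong\mo_{\p^2}$ and invoking the correct input, namely Cohen--Macaulayness of $\mo_C$ for an arbitrary plane curve $C$; one could instead avoid the classification of $J$ and argue directly with the reflexive hull and local $\mathcal{E}xt$-sheaves, but the hypersurface argument is the cleanest on $\p^2$.
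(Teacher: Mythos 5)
Your proof is correct, but it follows a genuinely different route from the paper. The paper argues directly with purity and duality: given a $0$-dimensional subsheaf $T\subset F_{h_J}$, its preimage in $J$ is an extension of $T$ by $\mo_{\p^2}$, which splits because $\text{Ext}^1(T,\mo_{\p^2})^{\vee}\cong \text{Ext}^1(\mo_{\p^2},T\otimes\omega_{\p^2})=H^1(T(-3))=0$ for $T$ of dimension $0$; hence $T$ embeds in $J$ and must vanish by torsion freeness. You instead invoke the classification $J\cong I_Z(e)$ via the reflexive hull, interpret $h_J$ as a degree-$e$ form cutting out a curve $C$, and exhibit $F_{h_J}$ as a subsheaf of $\mo_C(e)$, concluding by Cohen--Macaulayness of the hypersurface $C$. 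Both arguments are complete: yours is more concrete (it identifies the support of $F_{h_J}$ inside $C$ and makes the one-dimensionality visible), and you are more careful than the paper about the degenerate case $J\cong\mo_{\p^2}$, which is harmless in the paper's applications since there $F_{h_J}$ has positive first Chern class; the paper's argument is shorter, needs no structure theory for $J$, and generalizes immediately (e.g.\ to torsion free sheaves of any rank), since it only uses that $\text{Ext}^1(T,\mo_{\p^2})=0$ for $0$-dimensional $T$.
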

\begin{proof}The injectivity of $h_J$ is obvious.  Let $T\subset F_{h_J}$ be 0-dimensional.  Since $\text{Ext}^1(T,\mo_{\p^2})^{\vee}\cong \text{Ext}^1(\mo_{\p^2},T)=0$, $T$ must also be contained in $J$.  Then $T=0$ by the torsion freeness of $J$.  Hence the lemma.
\end{proof}
\begin{lemma}\label{inch}Let $n=\frac{d(d-3)}2+\Delta$ for some $\Delta>0$.  Let $\mh^{n,l}~(0\leq l\leq \frac{d(d-3)}2+1)$ be the substack of $\mh^n$ parametrizing ideal sheaves $I_{n}$ of colength $n$ satisfying that $dim~H^0(I_n(d-3))=l$.  Then for $l>0$, $dim~\mh^{n,l}\leq 2n-1-\Delta$.
\end{lemma}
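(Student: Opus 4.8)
The plan is to parametrize $\mh^{n,l}$ by first choosing a curve in the linear system $|\mo_{\p^2}(d-3)|$ through which the relevant section of $I_n(d-3)$ factors, and then counting ideal sheaves of a fixed colength on that curve. Concretely, if $I_n\in\mh^{n,l}$ with $l>0$, then $H^0(I_n(d-3))\neq 0$, so there is a nonzero $h\in H^0(I_n(d-3))\subset H^0(\mo_{\p^2}(d-3))$, i.e. $I_n(d-3)$ contains $\mo_{\p^2}$ as a subsheaf via $h$, vanishing on a curve $C_h\in|(d-3)H|$. The cokernel $\mo_{\p^2}(d-3)/\mo_{\p^2}\cong\mo_{C_h}(d-3)$ receives the image of $I_n(d-3)$, so $I_n(d-3)$ sits between $\mo_{\p^2}$ and $\mo_{\p^2}(d-3)$, and $I_n(d-3)/\mo_{\p^2}$ is an ideal sheaf of a $0$-dimensional subscheme inside $\mo_{C_h}(d-3)$. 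Twisting back, $I_n$ is determined by the curve $C_h\in|(d-3)H|$ together with a length-$e$ subsheaf of $\mo_{C_h}$ (equivalently a $0$-dimensional quotient of $\mo_{C_h}$ of length $e$), where $e$ is forced by the colength bookkeeping: since $\chi(\mo_{\p^2})=1$, $\chi(I_n)=1-n$, and $\chi(\mo_{C_h})=-\binom{d-3-1}{2}+\cdots$; carrying out Riemann–Roch gives $e=\Delta$. (The arithmetic genus of a degree-$(d-3)$ plane curve is $\binom{d-4}{2}$, and $\frac{d(d-3)}2-\binom{d-4}{2}$ telescopes correctly, so the residual length is exactly $\Delta=n-\frac{d(d-3)}2$.)

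Next I would estimate dimensions. The space of choices of $C_h$ is $\dim|(d-3)H|=\binom{d-1}{2}-1$, but multiplying $h$ by a scalar does not change $I_n$, so passing to the stack (which quotients by the $\mathbb{C}^*$ of automorphisms of an ideal sheaf) this contributes $\binom{d-1}{2}-1$ to the stack dimension after the $-1$ for the automorphism is absorbed — I would organize this exactly as in the proof of Proposition \ref{dnki}, via a $\mhom(\mo_{\p^2},-)^{*}$-type stack over the relevant Hilbert-of-curves stratum. Over a fixed integral $C_h$, the compactified Jacobian / Quot scheme of length-$\Delta$ quotients of $\mo_{C_h}$ has dimension $\Delta$ (it is a $\mathrm{Quot}$ scheme on a curve, of the expected dimension $\Delta\cdot\mathrm{rk}=\Delta$). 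So the total count is at most
\[
\Big(\tbinom{d-1}{2}-1\Big)+\Delta.
\]
The claimed bound is $2n-1-\Delta=d(d-3)+2\Delta-1-\Delta=d(d-3)-1+\Delta$. Since $\binom{d-1}{2}-1=\frac{(d-1)(d-2)}{2}-1\le d(d-3)-1$ for $d\ge 3$ (and the small cases $d\le 2$ are vacuous or handled directly), the estimate closes with room to spare.

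The main obstacle I anticipate is the non-integral locus in $|(d-3)H|$: when $C_h$ is reducible or non-reduced, the "Quot scheme of $\mo_{C_h}$" is no longer a nice compactified Jacobian, and more seriously a given $I_n$ may factor through several different $C_h$ (the section $h$ is only well-defined up to the full $l$-dimensional space $H^0(I_n(d-3))$, so there is an $(l-1)$-dimensional family of curves containing the support data). I would handle this by stratifying $\mh^{n,l}$ according to $l$ and the geometry of the base curve: on each stratum, the fiber of "forget $I_n$, remember $C_h$" has dimension $l-1$, while the image in $|(d-3)H|$ has dimension at most $\binom{d-1}{2}-1-(l-1)$, so the product is unchanged; and the length-$\Delta$ quotient part is bounded by $\Delta$ uniformly (upper-semicontinuity of Quot-scheme dimension, or a direct bound $h^0$ of the structure sheaf as in the $\text{Hom}$-estimates used earlier for $\mc_{d/2}^2$). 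Combining the stratum-wise bounds with the inequality $\binom{d-1}{2}-1\le d(d-3)-1$ gives $\dim\mh^{n,l}\le 2n-1-\Delta$ in all cases.
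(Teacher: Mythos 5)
Your overall architecture (pass from $I_n$ to a pair consisting of a curve $C_h\in|(d-3)H|$ and residual zero-dimensional data on $C_h$, then count) can be made to work, but as written it contains a concrete arithmetic error that invalidates the dimension count, and the error is masked by "room to spare" that does not actually exist.

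The residual length is $n$, not $\Delta$. From $0\ra\mo_{\p^2}\xrightarrow{h}I_n(d-3)\ra F\ra0$ and $0\ra I_n(d-3)\ra\mo_{\p^2}(d-3)\ra\mo_Z\ra0$ one gets $0\ra F\ra\mo_{C_h}(d-3)\ra\mo_Z\ra0$, where $Z$ is the colength-$n$ subscheme itself (which lies on $C_h$ precisely because $h\in H^0(I_n(d-3))$). So the datum attached to $(I_n,[h])$ is a point of $Hilb^{n}(C_h)$, a length-$n$ quotient of $\mo_{C_h}$; the Euler characteristic $\chi(F)=-\Delta$ does not say the quotient has length $\Delta$. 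Consequently your count $\bigl(\binom{d-1}{2}-1\bigr)+\Delta$ should be $\bigl(\binom{d-1}{2}-1\bigr)+\dim Hilb^n(C)$. Since $\binom{d-1}{2}-1=\frac{d(d-3)}{2}=n-\Delta$, the target $2n-1-\Delta$ forces $\dim Hilb^n(C)\leq n+l-1$, i.e.\ essentially $\dim Hilb^n(C)\leq n$ for \emph{every} curve $C$ in $|(d-3)H|$, reducible and non-reduced ones included. That bound is true, but it is not free: it needs Brian\c{c}on's theorem that the punctual Hilbert scheme of a smooth surface at a point has dimension $n-1$, applied fiberwise over $\mathrm{Sym}^n(C_{red})$; your appeal to the compactified Jacobian of an integral curve and to "upper-semicontinuity" does not cover this, and your fallback stratification for the non-integral locus is not an argument. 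Note also that after the correction the inequality is tight — there is no slack of $\frac{d(d-3)}{2}-1$ — so every step must be exact.

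For comparison, the paper avoids the Hilbert scheme of points on a curve entirely: it reads the cokernel $F$ of $h$ as a point of $\mm_\bullet^{a}(d-3,-\Delta)$ (Lemma \ref{quit}), whose dimension $(d-3)^2$ is already known, bounds $h^1(F(-3))\leq l-1+\Delta+3(d-3)$, and runs the $\mext^1$/$\mhom$ incidence count of Proposition \ref{dnki} to get $\dim\mh^{n,l}+l\leq(d-3)^2+l-1+3(d-3)+\Delta=2n-1-\Delta+l$. If you want to keep your route, replace the length-$\Delta$ claim by the length-$n$ one and supply the uniform bound $\dim Hilb^n(C)\leq n$ for arbitrary plane curves $C$.
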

\begin{proof}For an ideal sheaf $I_n\in\mh^{n,l}$ with $l>0$, we can fit it into the following sequence.
\[0\ra\mo_{\p^2}\ra I_n(d-3)\ra F\ra 0.\]
By Lemma \ref{quit}, $F\in\mm_{\bullet}^a(d-3,-\Delta)$ (with $a=l$ for instance).  Moreover $dim~H^0(F(-3))\leq dim~H^0(F)=l-1.$  Hence $dim~H^1(F(-3))\leq l-1+\Delta+3(d-3).$  Then by analogous argument to the proof of Proposition \ref{dnki}, we have
\[dim~\mh^{n,l}+l\leq dim~\mm_{\bullet}^a(d-3,\Delta)+l-1+3(d-3)+\Delta=2n-1-\Delta+l.\]
Hence the lemma.
\end{proof}

Let $\mu_A(-)$ be the $A$-valued motivic measure (see e.g. Section 1 in \cite{kap}) with $A$ a commutative ring or a field if needed.  Denote by $A_n$ the subgroup (not a subring) generated by the image of $\mu_A(\ms)$ with $dim~\ms\leq n$.  

By Proposition \ref{dlt}, we know that 
\[\mu_A(\mm^a_{\bullet}(d,\chi))~\equiv~\mu_A(\mm(d,\chi))~~~mod~(A_{d^2-d+1}).\]

Define
\begin{equation}\label{rho}\rho_d:=\left\{\begin{array}{l}d-1,~for~d=p~or~2p~with~p~prime.\\ \\ 7,~otherwise.\end{array}\right.\end{equation}

By Proposition \ref{prime} and Proposition \ref{gdw}, we have
\[\mu_A(\mm^a_{\bullet}(d,\chi))~\equiv~\mu_A(\mn(d,\chi))~~~mod~(A_{d^2-\rho_d}).\]

Notice that $\rho_d\leq d-1$.  Let $-2d-1\leq\chi\leq - d+1$, then $-\chi\geq \rho_d$ and $3d+\chi\geq \rho_d$.  For every sheaf $F\in\mm_{\bullet}^a(d,\chi)$, there is a non split sequence
\begin{equation}\label{nsp}0\ra\mo_{\p^2}(-3)\ra \widetilde{I}\ra F\ra0.\end{equation}
$\widetilde{I}$ can have torsion if $F\not\in \mn(d,\chi)$.  If $\widetilde{I}$ is torsion free, then $\widetilde{I}\cong I_{\bar{d}}(d-3)$ for some ideal sheaf $I_{\bar{d}}$ with colength $\bar{d}:=\frac{d(d-3)}2-\chi$.  Let $\um^a(d,\chi)$ be the open substack of $\mm^a_{\bullet}(d,\chi)$ parametrizing sheaves $F$ such that $H^0(F)=0$ and $H^1(F(3))=0$.  Then we have
\[\mm^a_{\bullet}(d,\chi)=\um^a(d,\chi)\cup(\coprod_{j\leq 0} \mw^a_{l,j}(d,\chi)\cup\coprod_{i\geq 3}\mm^a_{k,i}(d,\chi)).\]
Since $\chi+jd\leq\chi\leq-\rho_d<0$ for $j\leq0$ and $\chi+id\geq \rho_d>0$ for $i\geq 3$, by Proposition \ref{duke} and Remark \ref{duck} we have
\[dim~(\coprod_{j\leq 0} \mw^a_{l,j}(d,\chi)\cup\coprod_{i\geq 3}\mm^a_{k,i}(d,\chi))\leq d^2-\min\{\rho_d,-\chi,3d+\chi\}=d^2-\rho_d.\]
Hence
\[\mu_A(\mm^a_{\bullet}(d,\chi))~\equiv~\mu_A(\um^a(d,\chi))~~~mod~(A_{d^2-\rho_d}).\]

Define $\mn_0(d,\chi):=\mn(d,\chi)\cap\um^a(d,\chi)$.  Then 
\begin{equation}\label{mmm}\begin{array}{l}\mu_A(\mm_{\bullet}^a(d,\chi))\equiv\mu_A(\mm(d,\chi))\equiv\mu_A(\um^a(d,\chi))\\ \qquad\qquad\qquad\equiv\mu_A(\mn_0(d,\chi))~mod~(A_{d^2-\rho_d}).\end{array}\end{equation}

On the other hand, by Lemma \ref{inch} we have
\[\mu_A(\mh^{\bar{d},0})~\equiv~\mu_A(\mh^{\bar{d}})~~~mod~(A_{2\bar{d}+\chi-1}).\]

Notice that $\forall I_{\bar{d}}\in\mh^{\bar{d},0}$, $H^0(I_{\bar{d}}(d))\neq0$ since $\chi(I_{\bar{d}}(d))=3d+1+\chi>0.$  Define $\mh^{\bar{d},0,0}$ to be the open substack of $\mh^{\bar{d},0}$ parametrizing ideal sheaves $I_{\bar{d}}\in\mh^{\bar{d},0}$ such that $H^1(I_{\bar{d}}(d))=0$.
\begin{lemma}\label{ddv}$\mh^{\bar{d},0}-\mh^{\bar{d},0,0}$ is of dimension $\leq 2\bar{d}-1-\rho_d$.
\end{lemma}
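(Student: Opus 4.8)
The plan is to bound the dimension of the locus $\mh^{\bar{d},0}-\mh^{\bar{d},0,0}$ of ideal sheaves $I_{\bar d}$ with $H^0(I_{\bar d}(d-3))=0$ but $H^1(I_{\bar d}(d))\neq 0$, by translating the condition $H^1(I_{\bar d}(d))\neq 0$ into the existence of an extension and then passing, via the sheaf $F$ appearing in (\ref{nsp}), back into the moduli stack $\mm^a_{\bullet}(d,\chi)$ whose stratification by the invariants $(k,i)$ we already control. Concretely, for $I_{\bar d}\in\mh^{\bar d,0}-\mh^{\bar d,0,0}$ write as in Lemma \ref{quit} a sequence $0\to\mo_{\p^2}\to I_{\bar d}(d)\to F'\to 0$ using a nonzero section of $I_{\bar d}(d)$ (which exists since $\chi(I_{\bar d}(d))=3d+1+\chi>0$); then $F'\in\mm^a_{\bullet}(d,\chi')$ for the appropriate $\chi'$, and $H^1(I_{\bar d}(d))\neq 0$ forces a nontrivial constraint on the cohomology of $F'$, namely $H^1(F')\neq 0$ (and, dually, the vanishing $H^0(I_{\bar d}(d-3))=0$ pins down $H^0(F'(-3))$).

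First I would make precise which strata $\mm^a_{k,i}(d,\chi')$ (equivalently, via Remark \ref{duke}, which $\mw$-strata) can contain such an $F'$: the condition $H^1(F')\neq 0$ means $i\geq 0$ in the notation of $\mm^a_{k,i}$, and combined with $H^0(I_{\bar d}(d-3))=0$ one gets a lower bound on $i$ of the shape $i\geq i_0$ with $i_0$ comparable to $3$. Next I would run the now-standard dimension count: the stack of nonzero sections together with the stack of such $F'$ fibers over $\mh^{\bar d,0}-\mh^{\bar d,0,0}$ with fiber dimension $h^0(I_{\bar d}(d))$, exactly as in the proofs of Proposition \ref{dnki} and Lemma \ref{inch}, so that
\[
\dim(\mh^{\bar d,0}-\mh^{\bar d,0,0}) \;\leq\; \dim\Big(\coprod_{i\geq i_0}\mm^a_{k,i}(d,\chi')\Big) - h^0 + (\text{correction}),
\]
and then invoke Proposition \ref{dnki} / Remark \ref{duck}, which give $\dim\mm^a_{k,i}(d,\chi')\leq d^2-|\chi'+id|-k$, to bound the right-hand side by $d^2$ minus something at least $\rho_d$. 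Unwinding $\bar d=\frac{d(d-3)}{2}-\chi$ and the numerics $-\chi\geq\rho_d$, $3d+\chi\geq\rho_d$ that were established just before (\ref{mmm}), this should land exactly at $2\bar d-1-\rho_d$.

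The main obstacle I anticipate is bookkeeping rather than conceptual: getting the bound $\min\{-\chi',\,3d+\chi',\,\rho_d\}$ to come out as $\rho_d$ in every admissible stratum, i.e. checking that for every $(k,i)$ with $i\geq i_0$ the quantity $|\chi'+id|+k$ is at least $\rho_d$ after the shift to the Hilbert-scheme normalization. One has to be careful that the two cohomological constraints ($H^1(F')\neq 0$ forcing $k\geq 1$ and fixing the range of $i$; $H^0(I_{\bar d}(d-3))=0$ on the other side) are used simultaneously and are not double-counted, and that the boundary strata $i=i_0$ are handled — there the bound is tight and may need the extra input from Proposition \ref{dlt} or Lemma \ref{codr} to push the codimension up to $\rho_d$. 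Once the admissible $(k,i)$ are enumerated, each individual estimate is a one-line application of Proposition \ref{dnki}, so I expect the proof to be short modulo this case check.
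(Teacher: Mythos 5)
Your proposal is correct and follows essentially the same route as the paper: the nonzero section of $I_{\bar d}(d)$ from Lemma \ref{quit} produces the sequence $0\to\mo_{\p^2}(-3)\to I_{\bar d}(d-3)\to F\to 0$ with $H^0(F)=0$ and $H^1(F(3))\neq 0$, so $F$ lands in $\coprod_{i\geq 3}\mm^a_{k,i}(d,\chi)$, whose dimension is at most $d^2-\min\{3d+\chi,\rho_d\}=d^2-\rho_d$, and the incidence-stack count from Proposition \ref{dnki} (fiber $h^0(I_{\bar d}(d))\geq 3d+1+\chi$ on the Hilbert side versus $h^1(F)=-\chi$ on the extension side) yields exactly $2\bar d-1-\rho_d$. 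The only loose end in your sketch, pinning down $i_0=3$ and checking the boundary stratum, is handled by the inequality $3d+\chi\geq\rho_d$ already established for the chosen range of $\chi$, so no extra input from Proposition \ref{dlt} is needed.
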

\begin{proof}$\forall I_{\bar{d}}\in\mh^{\bar{d},0}-\mh^{\bar{d},0,0}$, $H^0(I_{\bar{d}}(d))\neq0$ hence by Lemma \ref{quit} we have the following exact sequence
\[0\ra\mo_{\p^2}(-3)\ra I_{\bar{d}}(d-3)\ra F\ra 0,\]
with $F\in\mm_{\bullet}^{a}(d,\chi)$.  Since $H^0(F)\cong H^0(I_{\bar{d}}(d-3))=0$, $dim~H^1(F)=-\chi$.  Moreover, $H^1(F(3))\cong H^1(I_{\bar{d}}(d))\neq 0$ hence $F\in\coprod_{i\geq3}\mm^a_{k,i}(d,\chi)$.  By Proposition \ref{duke}, $dim~\coprod_{i\geq3}\mm^a_{k,i}(d,\chi)\leq d^2-\min\{(3d+\chi),\rho_d\}=d^2-\rho_d$.  By the analogous argument to the proof of Proposition \ref{dnki} we have
\[dim~(\mh^{\bar{d},0}-\mh^{\bar{d},0,0})+3d+1+\chi\leq d^2-\rho_d-\chi.\]
$2\bar{d}=d(d-3)-2\chi$.  Hence the lemma. 
\end{proof}
Lemma \ref{inch} and Lemma \ref{ddv} together imply that
\begin{equation}\label{mmh}\mu_A(\mh^{\bar{d}})~\equiv~\mu_A(\mh^{\bar{d},0})~\equiv~\mu_A(\mh^{\bar{d},0,0})~~~mod~(A_{2\bar{d}-1-\rho_d}).
\end{equation}

Let stacks $\mext^1(-,\mo_{\p^2}(-3))^{*}$ and $\mhom(\mo_{\p^2}(-3),-)^{*}$ be as defined in the proof of in Proposition \ref{dnki}.  The sequence (\ref{nsp}) induces a birational map
\[\theta:\mext^1(\mm^a_{\bullet}(d,\chi),\mo_{\p^2}(-3))^{*}\dashrightarrow \mhom(\mo_{\p^2}(-3),\mh^{\bar{d}})^{*}.\]
$\theta$ is surjective for $a$ big enough.  

Denote by $\mathbb{U}^a(d,\chi)$ the preimage of $\mhom(\mo_{\p^2}(-3),\mh^{\bar{d},0,0})^{*}$ via $\theta$.  Then we have
\begin{equation}\label{bird}\mu_A(\mathbb{U}^a(d,\chi))=(\bl^{3d+1+\chi}-1)\cdot\mu_A(\mh^{\bar{d},0,0}),\end{equation}
where $\bl:=\mu_A(\mathbb{A})$ with $\mathbb{A}$ the affine line.  Then by (\ref{mmh}) we have
\begin{equation}\label{due}\mu_A(\mathbb{U}^a(d,\chi))~\equiv~(\bl^{3d+1+\chi}-1)\cdot\mh^{\bar{d}}~\equiv~\bl^{3d+1+\chi}\cdot\mh^{\bar{d}}~~mod~(A_{d^2-\chi-\rho_d})
\end{equation}

On the other hand, we have \[\mext^1(\mn_0(d,\chi),\mo_{\p^2}(-3))^{*}\subset \mathbb{U}^a(d,\chi)\subset \mext^1(\um^a(d,\chi),\mo_{\p^2}(-3))^{*}.\]
Hence by (\ref{mmm}),
\begin{equation}\label{une}\begin{array}{l}\mu_A(\mathbb{U}^a(d,\chi))~\equiv~(\bl^{-\chi}-1)\cdot\mu_A(\mn_0(d,\chi))~\\ \qquad\qquad\qquad\equiv~(\bl^{-\chi}-1)\cdot\mu_A(\mm(d,\chi))\\
\qquad\qquad\qquad\equiv~\bl^{-\chi}\cdot\mu_{A}(\mm(d,\chi))~~~mod~(A_{d^2-\chi-\rho_d}).\end{array}
\end{equation}
Combine (\ref{due}) and (\ref{une}), we have our main theorem as follows.
\begin{thm}\label{main}For and $d>0$ and $\chi$, let $\chi_0\equiv \pm\chi~mod~(d)$ and $-\frac {3d}2\leq\chi_0\leq-d$ (such $\chi_0$ is unique).  Then  we have
\[\mu_A(\mm(d,\chi))~\equiv~\bl^{3d+1+2\chi_0}\cdot \mu_A(\mh^{\bar{d}}),~~~mod~(A_{d^2-\rho_d}),\]
with $\bar{d}=\frac{d(d-3)}2-\chi_0$ and $\rho_d$ defined in (\ref{rho}).  

On the scheme level we have
\[\mu_A(M(d,\chi))~\equiv~\bl^{3d+1+2\chi_0}\cdot \mu_A(Hilb^{\bar{d}}(\p^2)),~~~mod~(A_{d^2+1-\rho_d}).\]
\end{thm}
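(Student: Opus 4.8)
The plan is to treat Theorem \ref{main} as the assembly of the congruences that precede it: the chain (\ref{mmm}), (\ref{mmh}), (\ref{bird}), (\ref{due}), (\ref{une}) has already been set up for every $\chi$ with $-2d-1\leq\chi\leq -d+1$, so it only remains to normalize $(d,\chi)$, to combine (\ref{due}) with (\ref{une}) and cancel a power of $\bl$, and to descend from stacks to the coarse spaces.

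First I would reduce to the case $\chi=\chi_0$. Tensoring by $\mo_{\p^2}(1)$ gives an isomorphism $\mm(d,\chi)\xrightarrow{\sim}\mm(d,\chi+d)$, since $c_1=dH$ is unchanged and $\chi(F(1))=\chi(F)+d$ by Riemann--Roch for a rank-$0$ sheaf with $c_1=dH$; and the duality functor $F\mapsto\me xt^1(F,\mo_{\p^2}(-3))$ of Remark \ref{duke} restricts to an isomorphism $\mm(d,\chi)\xrightarrow{\sim}\mm(d,-\chi)$. Both functors preserve stability, so $\mu_A(\mm(d,\chi))=\mu_A(\mm(d,\chi_0))$ for the unique $\chi_0\equiv\pm\chi\ (\mathrm{mod}\ d)$ with $-\tfrac{3d}{2}\leq\chi_0\leq -d$, and the right-hand side of the theorem is already expressed through $\chi_0$ and $\bar d=\tfrac{d(d-3)}{2}-\chi_0$. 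Since $[-\tfrac{3d}{2},-d]\subset[-2d-1,-d+1]$ I may now assume $\chi=\chi_0$; one verifies $-\chi_0\geq\rho_d$ and $3d+\chi_0\geq\tfrac{3d}{2}\geq\rho_d$, which are exactly the inequalities that the dimension estimates behind (\ref{due}) and (\ref{une}) required.

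Next I would combine the two evaluations of $\mu_A(\mathbb{U}^a(d,\chi_0))$. From (\ref{due}) one has $\mu_A(\mathbb{U}^a(d,\chi_0))\equiv\bl^{3d+1+\chi_0}\mu_A(\mh^{\bar d})$ and from (\ref{une}) one has $\mu_A(\mathbb{U}^a(d,\chi_0))\equiv\bl^{-\chi_0}\mu_A(\mm(d,\chi_0))$, both modulo $A_{d^2-\chi_0-\rho_d}$. Hence $\bl^{-\chi_0}\mu_A(\mm(d,\chi_0))\equiv\bl^{3d+1+\chi_0}\mu_A(\mh^{\bar d})\ (\mathrm{mod}\ A_{d^2-\chi_0-\rho_d})$, and dividing by the common factor $\bl^{-\chi_0}$ — using that $\bl$ is a non-zero-divisor (invertible if necessary) in $A$ and that multiplication by $\bl$ carries $A_m$ into $A_{m+1}$, so that clearing this factor drops the filtration index by exactly $-\chi_0$ — gives the stack statement $\mu_A(\mm(d,\chi_0))\equiv\bl^{3d+1+2\chi_0}\mu_A(\mh^{\bar d})\ (\mathrm{mod}\ A_{d^2-\rho_d})$. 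For the scheme statement, when $d$ and $\chi$ are coprime $M(d,\chi)$ is a fine moduli scheme (Remark \ref{con}), so $\mm(d,\chi)\to M(d,\chi)$ is a trivial $\mathbb{C}^{*}$-gerbe and $\mu_A(\mm(d,\chi))=(\bl-1)^{-1}\mu_A(M(d,\chi))$, while $\mh^{n}\cong Hilb^{[n]}(\p^2)\times B\mathbb{C}^{*}$ gives $\mu_A(\mh^{n})=(\bl-1)^{-1}\mu_A(Hilb^{[n]}(\p^2))$; substituting into the stack congruence and multiplying through by $\bl-1$ yields $\mu_A(M(d,\chi))\equiv\bl^{3d+1+2\chi_0}\mu_A(Hilb^{\bar d}(\p^2))$ modulo $(\bl-1)A_{d^2-\rho_d}\subseteq A_{d^2+1-\rho_d}$. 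For non-coprime $(d,\chi)$ the same works once one notes, by the argument of Proposition \ref{dlt} on the coarse space, that the strictly semistable part of $M(d,\chi)$ has codimension $\geq\rho_d$.

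I expect the main obstacle is not any single step but the legitimacy of cancelling $\bl^{-\chi_0}$ in the congruence: this is valid only in a ring $A$ in which $\bl$ is not a zero-divisor — precisely the ``field if needed'' clause — and it must be carried out while carefully tracking that the filtration index descends from $d^2-\chi_0-\rho_d$ to $d^2-\rho_d$ and not to something weaker; a secondary point is to make sure the bookkeeping stays internally consistent across the twist and duality isomorphisms, which it does because both leave $\mm(d,\chi_0)$, $\mh^{\bar d}$ and the exponent $3d+1+2\chi_0$ literally unchanged. All the genuinely hard content — the codimension bounds of Propositions \ref{prime} and \ref{gdw} and of Lemmas \ref{inch} and \ref{ddv} — is already in place, so the theorem itself is an exercise in assembly.
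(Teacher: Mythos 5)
Your proposal follows the paper's own route: the paper proves Theorem \ref{main} precisely by reducing to the range $-2d-1\le\chi\le-d+1$ (via twisting by $\mo_{\p^2}(1)$ and the duality of Remark \ref{duke}, cf. Remark \ref{zero}) and then assembling the congruences (\ref{mmm}), (\ref{mmh}), (\ref{bird}), (\ref{due}), (\ref{une}) exactly as you do, with the scheme-level statement obtained from the $\mathbb{C}^{*}$-gerbe/quotient description of $\mm(d,\chi)$ and $\mh^{\bar d}$. The cancellation of $\bl^{-\chi_0}$ that you single out is carried out implicitly in the paper when it ``combines (\ref{due}) and (\ref{une})'', so your explicit handling of it (non-zero-divisor hypothesis on $\bl$ and the filtration bookkeeping) is, if anything, more careful than the original.
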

\begin{rem}\label{zero}We choose $-\frac{3d}2\leq\chi_0\leq -d$ in Theorem \ref{main} because we want $\chi_0$ to be uniquely determined by $\chi$.  But it is easy to see Theorem \ref{main} holds for $\chi_0\equiv \pm\chi~mod~(d)$ and $-2d-1\leq\chi_0\leq-d+1$.
\end{rem}
\begin{coro}\label{bet}Let $b_i(-)$ and $h^{p,q}(-)$ be the $i$-th Betti number and Hodge number with index $(p,q)$ respectively.  Then for any $d>0$ and $\chi$ coprime to $d$, if $i$ and $p+q$ are both no less than $1+2(d^2+1-\rho_d)$, we  then have

(1) $b_{i}(M(d,\chi))=0$ for $i$ odd.

(2) $h^{p,p}(M(d,\chi))=b_{2p}(M(d,\chi))=b_{2p-2(3d+1+2\chi_0)}(Hilb^{\bar{d}}(\p^2))$. 

(3) $h^{p,q}=0$ for $p\neq q$.
\end{coro}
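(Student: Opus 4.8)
The plan is to deduce Corollary~\ref{bet} from the scheme-level congruence of Theorem~\ref{main} by specializing the motivic measure to the Hodge--Deligne ($E$-polynomial) measure and then using the smooth projectivity, hence cohomological purity, of both sides.

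\textbf{Setup and truncation.} First I would take $A=\mathbb{Z}[u,v]$ and $\mu_A(-)=E(-;u,v):=\sum_{p,q}e^{p,q}(-)\,u^pv^q$, where $e^{p,q}(X)=\sum_k(-1)^k h^{p,q}(H^k_c(X;\bc))$ is the virtual Hodge number; this is an $A$-valued motivic measure because the $E$-polynomial is additive on locally closed stratifications and multiplicative. For any variety $X$ one has $e^{p,q}(X)=0$ unless $0\le p,q\le\dim X$, so $E(X;u,v)$ has total degree $\le 2\dim X$ in $u,v$; consequently every element of $A_m$, being a $\mathbb{Z}$-linear combination of such $E$-polynomials with dimension $\le m$, has total degree $\le 2m$. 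Writing $c:=3d+1+2\chi_0$ and applying the scheme-level statement of Theorem~\ref{main} with $m=d^2+1-\rho_d$, the class $E(M(d,\chi);u,v)-\bl^{c}\,E(Hilb^{\bar d}(\p^2);u,v)$ has total degree $\le 2(d^2+1-\rho_d)$. Hence the coefficient of $u^pv^q$ in $E(M(d,\chi))$ equals that of $\bl^{c}E(Hilb^{\bar d}(\p^2))$ for every $(p,q)$ with $p+q\ge 1+2(d^2+1-\rho_d)$.

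\textbf{Identifying both sides.} When $\gcd(d,\chi)=1$, $M(d,\chi)$ is a fine moduli space of stable sheaves, smooth (the obstruction space $\text{Ext}^2(F,F)\cong\text{Hom}(F,F(-3))^\vee$ vanishes for $F$ stable, since a nonzero morphism would have to increase and decrease the slope simultaneously) and projective; therefore $H^k(M(d,\chi))$ is a pure Hodge structure of weight $k$, whence $e^{p,q}(M(d,\chi))=(-1)^{p+q}h^{p,q}(M(d,\chi))$, so $E(M(d,\chi))$ records all Hodge numbers. On the other side, $Hilb^{[n]}(\p^2)$ is smooth projective and admits an affine cell decomposition (Ellingsrud--Str\o mme, or a Bia\l ynicki-Birula decomposition for a torus action with isolated fixed points); hence its odd cohomology vanishes, $H^{2p}(Hilb^{[n]}(\p^2))$ is pure of type $(p,p)$, and $E(Hilb^{[n]}(\p^2);u,v)=\sum_p b_{2p}(Hilb^{[n]}(\p^2))(uv)^p$ is a polynomial in $\bl=uv$. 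Thus $\bl^{c}E(Hilb^{\bar d}(\p^2))=\sum_p b_{2p}(Hilb^{\bar d}(\p^2))(uv)^{p+c}$, whose $u^pv^q$-coefficient is $0$ for $p\ne q$ and $b_{2p-2c}(Hilb^{\bar d}(\p^2))$ for $p=q$ (with the convention $b_j=0$ for $j<0$).

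\textbf{Conclusion.} For $p+q\ge 1+2(d^2+1-\rho_d)$, comparing coefficients gives $e^{p,q}(M(d,\chi))=0$ when $p\ne q$, hence $h^{p,q}(M(d,\chi))=0$, which is (3); and for $p=q$ in that range, $h^{p,p}(M(d,\chi))=e^{p,p}(M(d,\chi))=b_{2p-2c}(Hilb^{\bar d}(\p^2))$. Summing Hodge numbers along a fixed diagonal $p+q=i$: if $i$ is odd and $i\ge 1+2(d^2+1-\rho_d)$, every contributing $(p,q)$ has $p\ne q$ and lies in the above range, so $b_i(M(d,\chi))=0$, giving (1); if $i=2p\ge 1+2(d^2+1-\rho_d)$, then $b_{2p}(M(d,\chi))=\sum_{a+b=2p}h^{a,b}(M(d,\chi))=h^{p,p}(M(d,\chi))=b_{2p-2c}(Hilb^{\bar d}(\p^2))$, giving (2). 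The substantive inputs here are external to the bookkeeping --- the smooth projectivity (hence purity) of $M(d,\chi)$ for coprime $(d,\chi)$, and the classical fact that $Hilb^{[n]}(\p^2)$ has cohomology concentrated in bidegrees $(p,p)$. The only point demanding a little care is that the truncation level $A_{d^2+1-\rho_d}$ translates precisely into the stated threshold $1+2(d^2+1-\rho_d)$; there is no serious obstacle beyond this.
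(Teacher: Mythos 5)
Your argument is correct and is essentially the intended one (the paper states this corollary without proof): specialize $\mu_A$ to the $E$-polynomial, observe that classes in $A_{d^2+1-\rho_d}$ have total degree $\le 2(d^2+1-\rho_d)$ so the scheme-level congruence of Theorem \ref{main} determines all coefficients $u^pv^q$ with $p+q\ge 1+2(d^2+1-\rho_d)$, and then use purity of the smooth projective $M(d,\chi)$ together with the $(p,p)$-concentration of $H^*(Hilb^{[n]}(\p^2))$ to read off (1)--(3). All the inputs you cite (smoothness via $\mathrm{Ext}^2(F,F)=0$, the Ellingsrud--Str\o mme cell decomposition, and the degree bookkeeping) are accurate.
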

\begin{coro}\label{disk}For any $d>0$ and $\chi_1,\chi_1$, we have
\[\mu_A(\mm(d,\chi_1))~\equiv~ \mu_A(\mm(d,\chi_2)),~~~mod~(A_{d^2-\rho_d}).\]
In particular, if $\chi_i$ are coprime to $d$ for $i=1,2$, then we have
 \[\mu_A(M(d,\chi_1))~\equiv~ \mu_A(M(d,\chi_2)),~~~mod~(A_{d^2+1-\rho_d}).\]
\end{coro}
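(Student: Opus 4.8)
The plan is to deduce Corollary \ref{disk} directly from Theorem \ref{main} by observing that the right-hand side of the congruence in Theorem \ref{main} depends on $\chi$ only through $\chi_0$, and $\chi_0$ depends only on the residue class $\pm\chi \bmod d$. First I would check that the modulus is right: Theorem \ref{main} gives $\mu_A(\mm(d,\chi_i)) \equiv \bl^{3d+1+2(\chi_0)_i}\cdot\mu_A(\mh^{\overline{d}_i})$ modulo $A_{d^2-\rho_d}$ for $i=1,2$, where $(\chi_0)_i$ is the unique integer in $[-\tfrac{3d}2,-d]$ congruent to $\pm\chi_i \bmod d$ and $\overline{d}_i = \tfrac{d(d-3)}2 - (\chi_0)_i$.

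The key step is then: if $\chi_1 \equiv \pm\chi_2 \bmod d$, then $(\chi_0)_1 = (\chi_0)_2$ by uniqueness, hence $\overline{d}_1 = \overline{d}_2$, hence the two right-hand sides are literally equal, so $\mu_A(\mm(d,\chi_1)) \equiv \mu_A(\mm(d,\chi_2)) \bmod (A_{d^2-\rho_d})$. But Corollary \ref{disk} as stated asserts this for \emph{arbitrary} $\chi_1,\chi_2$ (no congruence hypothesis), so the real content is that the congruence class is irrelevant as well. To handle that I would first reduce to the case where both $\chi_i$ lie in a single fundamental domain, and then I need one extra input beyond Theorem \ref{main}: a symmetry relating $\mm(d,\chi)$ for different residue classes. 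The natural candidate is the standard isomorphism $\mm(d,\chi) \cong \mm(d,\chi+d)$ given by tensoring with $\mo_{\p^2}(1)$ (which sends a sheaf $F$ with $c_1=dH$, $\chi(F)=\chi$ to $F(1)$ with $\chi(F(1)) = \chi + d$), together with $\mm(d,\chi)\cong\mm(d,-\chi)$ via the duality $F\mapsto \me xt^1(F,\mo_{\p^2}(-3))$ already used in Remark \ref{duke}. These two give $\mu_A(\mm(d,\chi_1)) = \mu_A(\mm(d,\chi_2))$ on the nose whenever $\chi_1 \equiv \pm\chi_2 \bmod d$, which is in fact stronger than the congruence mod $A_{d^2-\rho_d}$.

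Wait — that shows the statement only for $\chi_1 \equiv \pm\chi_2 \bmod d$; for genuinely different residues one cannot expect an isomorphism of stacks, so the congruence $\bmod (A_{d^2-\rho_d})$ is the best possible, and this is exactly where Theorem \ref{main} does the work: its right side $\bl^{3d+1+2\chi_0}\cdot\mu_A(\mh^{\overline{d}})$ is \emph{not} independent of $\chi_0$ as a class, so a priori different residues give different answers. Re-examining the statement of Corollary \ref{disk}, the assertion must therefore be read as: the class of $\mu_A(\mm(d,\chi))$ in $\mu_A(\text{Stacks})/A_{d^2-\rho_d}$ does not depend on $\chi$ at all. For this I would argue that all the "error" terms $\bl^{3d+1+2\chi_0}\cdot\mu_A(\mh^{\overline{d}})$ for the various admissible $\chi_0 \in \{-d, -d-1, \ldots\}$ actually coincide modulo $A_{d^2-\rho_d}$ — this is plausible because $\dim \mh^{\overline d} = 2\overline d - 1$ and $2(3d+1+2\chi_0) + 2\overline d - 1 = d^2$ is independent of $\chi_0$, so the top-dimensional part is always a class in dimension exactly $d^2$ and the difference between two such products lands in strictly lower dimension precisely when the leading terms of $\mu_A(\mh^{\overline d})$ agree, which follows from the known stable behaviour of Hilbert scheme invariants (or, more cheaply, from applying Theorem \ref{main} with a fixed convenient $\chi$ and comparing). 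The main obstacle is thus pinning down that last comparison cleanly: one wants $\bl^{3d+1+2\chi_0}\cdot\mu_A(\mh^{\overline d}) \equiv \bl^{3d+1+2\chi_0'}\cdot\mu_A(\mh^{\overline d'}) \bmod (A_{d^2-\rho_d})$ for all admissible pairs, and the honest route is to invoke the $F\mapsto F(1)$ isomorphism to collapse everything to the $d$ residues $-d,\ldots,-2d+1$ and then note Theorem \ref{main} computes each of these in terms of a single Hilbert scheme stratum whose class mod $A_{d^2-\rho_d}$ is forced.

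For the scheme-level statement the argument is identical, using the scheme form of Theorem \ref{main} and the fact (Remark \ref{con}) that $\dim M(d,\chi) = d^2+1 = \dim\mm(d,\chi)+1$, which shifts the modulus from $A_{d^2-\rho_d}$ to $A_{d^2+1-\rho_d}$; the coprimality hypothesis on the $\chi_i$ is only needed to guarantee $M(d,\chi_i)$ is a fine moduli space (a variety rather than just a coarse space with stacky structure) so that the motivic measure of the scheme is well-behaved and the passage from stacks to schemes via the $PGL$-bundle $\Omega(d,\chi)\to M(d,\chi)$ used in Proposition \ref{ttc} is valid. I would write the proof as: apply Theorem \ref{main} and Remark \ref{zero} to $\chi_1$ and $\chi_2$, reduce both to a common $\chi_0$ using the tensor and duality isomorphisms, and conclude.
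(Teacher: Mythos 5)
Your opening move is the same as the paper's: feed both $\chi_1$ and $\chi_2$ into Theorem \ref{main} (with Remark \ref{zero}) so that everything reduces to comparing the classes $\bl^{3d+1+2\chi_0}\cdot\mu_A(\mh^{\bar d})$ over the admissible normalizations $\chi_0$, and the case $\chi_1\equiv\pm\chi_2 \bmod d$ is indeed immediate (same $\chi_0$, or the twist $F\mapsto F(1)$ and the duality of Remark \ref{duke}). The divergence is at the only substantive point, the inequivalent residues. The paper stays internal to the moduli problem: Remark \ref{zero} enlarges the allowed range to $-2d-1\le\chi_0\le-d+1$, the corollary is restated as the congruence (\ref{hi}) between the right-hand sides over this window, and that is extracted from Theorem \ref{main} itself, applied with the two extreme normalizations $\chi_0=-2d-1$ and $\chi_0=-d+1$ to the isomorphic spaces $M(d,-2d-1)\cong M(d,-d+1)$; no property of Hilbert schemes beyond the main theorem is used. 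Your main closing move instead imports the stability of the classes of $Hilb^{n}(\p^2)$ (G\"ottsche, via the Ellingsrud--Str{\o}mme cell decomposition). Made precise --- the coefficients of the top $\rho_d$ powers of $\bl$ in $[Hilb^{n}(\p^2)]$ are independent of $n$ in the range occurring here, and an identity of this kind in the Grothendieck ring is preserved by every $\mu_A$ --- this does give the congruence for all admissible $\chi_0$ at once, so your route is workable; it is a heavier, external input than the paper invokes, but in return it treats all residue classes uniformly.

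The gaps in your write-up are concentrated at exactly that step. The two ``cheaper'' substitutes you float do not work as stated: applying Theorem \ref{main} twice for one fixed $\chi$ only ever compares normalizations $\chi_0\equiv\pm\chi \bmod d$, so it cannot join distinct residue classes (your own earlier observation that stack isomorphisms only reach $\chi_1\equiv\pm\chi_2$ is precisely this subtlety), and ``collapsing to the $d$ residues via $F\mapsto F(1)$ and noting the Hilbert scheme class is forced'' is an assertion, not an argument; your concluding plan (``reduce both to a common $\chi_0$'') likewise only covers the already-settled case $\chi_1\equiv\pm\chi_2$. So as written the cross-residue case rests entirely on the G\"ottsche appeal, which you should state and cite precisely: you need agreement of the top $\rho_d$ coefficients, not just the leading term --- matching only the top-dimensional part (which is where your dimension count $(3d+1+2\chi_0)+(2\bar d-1)=d^2$, without the spurious factor $2$, comes in) would give the congruence merely modulo $A_{d^2-1}$. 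Finally, for the scheme-level statement the point of coprimality is not fineness of $M(d,\chi)$ but that every semistable sheaf is then stable, so $\mm(d,\chi)\ra M(d,\chi)$ is a $\mathbb{C}^{*}$-gerbe, $\dim M(d,\chi)=d^2+1$, and the filtration shifts by one, which is how the modulus becomes $A_{d^2+1-\rho_d}$.
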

\begin{proof}By Theorem \ref{main} and Remark \ref{zero}, the corollary is equivalent to say that for any $-2d-1\leq \chi_1,\chi_2\leq -d+1$,
\begin{equation}\label{hi}\bl^{3d+1+2\chi_1}\cdot \mu_A(\mh^{\bar{d}_1})~\equiv~\bl^{3d+1+2\chi_2}\cdot \mu_A(\mh^{\bar{d}_2}),~~~mod~(A_{d^2-\rho_d}),\end{equation}
where $\bar{d}_i=\frac{d(d-3)}2-\chi_i$.

It is enough to show (\ref{hi}) for $\chi_1=-2d-1$ and $\chi_2=-d+1$ which follows from $M(d,-2d-1)\cong M(d,-d+1)$.  Hence the corollary.
\end{proof}
\begin{rem}If $d=p$ or $2p$ with $p$ prime, then the codimension $d-1$ can not be sharpened, i.e. in general
\[\mu_A(\mm(d,\chi))~\not\equiv~\bl^{3d+1+2\chi_0}\cdot \mu_A(\mh^{\bar{d}}),~~~mod~(A_{d^2-d}).\]
We can see this from the examples $d=4$ and $d=5$ computed in \cite{yth}. 
\end{rem}

\begin{coro}\label{star}For $d>0$ and $\chi$ coprime to $d$, $M(d,\chi)$ is stably rational.
\end{coro}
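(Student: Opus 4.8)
The plan is to deduce stable rationality of $M(d,\chi)$ from Theorem \ref{main} by reducing, up to a "stably rational error", to the known stable rationality of the Hilbert scheme $Hilb^{\bar d}(\p^2)$. Recall that $M(d,\chi)$ is irreducible of dimension $d^2+1$, and that stable rationality is a birational invariant, so it suffices to exhibit a dense open subscheme of $M(d,\chi)$ that is stably rational. The idea is to run the sequence (\ref{nsp}) in a constructive, not merely motivic, way: for $\chi$ in the normalized range $-2d-1\le\chi\le-d+1$ (which we may assume by the isomorphisms $M(d,\chi)\cong M(d,-\chi)$ and $M(d,\chi)\cong M(d,\chi+d)$ coming from dualizing and twisting by $\mo_{\p^2}(1)$, exactly as used in the proof of Corollary \ref{disk}), every $F$ in the open substack $\um^a(d,\chi)$ sits in a non-split extension $0\to\mo_{\p^2}(-3)\to I_{\bar d}(d-3)\to F\to 0$ with $I_{\bar d}\in Hilb^{\bar d}(\p^2)$. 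Since $d,\chi$ are coprime, $M(d,\chi)$ is a fine moduli space, so the construction globalizes.

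The key steps, in order, are as follows. First, restrict to the dense open locus $\mh^{\bar d,0,0}\subset\mh^{\bar d}$ (ideal sheaves $I$ with $H^0(I(d-3))=0$ and $H^1(I(d))=0$), which is dense by Lemma \ref{inch} and Lemma \ref{ddv}; its image in $Hilb^{\bar d}(\p^2)$ is a dense open $U$, and stable rationality of $Hilb^{\bar d}(\p^2)$ — which is itself well known, being birational to $\mathrm{Sym}^{\bar d}(\p^2)$ hence rational — passes to $U$. Second, over $U$ consider the (Zariski-locally trivial, since it is the projectivization of a vector bundle) projective bundle $\mathbb{P}(\mathcal{E})\to U$ whose fiber over $[I]$ is $\mathbb{P}(\mathrm{Hom}(\mo_{\p^2}(-3),I(d-3)))=\mathbb{P}(H^0(I(d)))$, of constant rank $3d+\chi$ on $\mh^{\bar d,0,0}$ because $H^1(I(d))=0$ there; the cokernel of the universal map is a flat family of sheaves $F$ in $\um^a(d,\chi)$, giving a classifying morphism $\Theta:\mathbb{P}(\mathcal{E})\dashrightarrow M(d,\chi)$. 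Third, identify the generic fiber of $\Theta$: since $\mathrm{Hom}(\mo_{\p^2}(-3),F)=H^0(F(3))$ has constant dimension on $\um^a(d,\chi)$ (equal to $3d+\chi$ by $H^1(F(3))=0$ and $H^0(F)=0$), and since the extension (\ref{nsp}) recovers $I$ from $F$ via $\widetilde I=\ker(F(3)\otimes\mo\to\cdots)$ essentially uniquely, $\Theta$ is generically a fibration whose fibers are opens in affine spaces $\mathbb{A}^{3d+\chi}$ (the extension classes), i.e. $\Theta$ is birational to a Zariski-locally trivial $\mathbb{A}^{N}$-bundle over a dense open of $M(d,\chi)$. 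Concluding, $\mathbb{P}(\mathcal{E})$ is birational over $U$ to $U\times\mathbb{A}^{3d+\chi}$, hence stably rational, while simultaneously birational to $M(d,\chi)\times\mathbb{A}^{N}$ for $N=3d+\chi$; therefore $M(d,\chi)\times\mathbb{A}^{3d+\chi}$ is stably rational, and so $M(d,\chi)$ is stably rational.

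The main obstacle I expect is the bookkeeping needed to make the two-sided bundle structure precise on genuine open subschemes rather than just on stacks or motivically: one must check that both $\mathrm{Hom}(\mo_{\p^2}(-3),I(d-3))$ and $\mathrm{Ext}^1(F,\mo_{\p^2}(-3))$ (equivalently $H^0(F(3))$) have locally constant dimension on the relevant dense opens so that the corresponding bundles are actual vector/projective bundles, and that these bundles are Zariski-locally trivial (which holds automatically for $\mathbb{P}(\mathcal{E})$ of a vector bundle, and for the $\mathrm{Ext}^1$-torsor because it is the total space of a vector bundle once a universal family exists, which it does by $\gcd(d,\chi)=1$). One also has to verify that the open sets match up — i.e. that $\Theta$ restricted to $\mathbb{P}(\mathcal{E})|_{U}$ dominates $M(d,\chi)$ with image containing $\um^a(d,\chi)\cap\mn(d,\chi)$, which is dense by (\ref{mmm}) — and that no stacky automorphism issue intervenes, which it does not because everything is a scheme in the coprime case. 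Once these are in place the rationality input is entirely classical (Hilbert schemes of points on $\p^2$ are rational), and stable rationality of $M(d,\chi)$ follows from cancelling affine-space factors on both sides. $\hfill\square$
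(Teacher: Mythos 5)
Your proof is essentially the paper's: the paper exhibits the projective bundle $\mathbb{P}(\mathcal{E}xt_p^1(\mf,\mo(-3)))$ over (the scheme associated to) $\mn_0(d,\chi)$ as birational to the projective bundle $\mathbb{P}(\mathcal{H}om_p(\mo(-3),\mathcal{I}_{\bar{d}}))$ over $Hilb^{\bar{d},0,0}(\p^2)$, the latter being rational, with the universal sheaf (hence the bundle over the moduli side) supplied by coprimality via Le Potier. One slip in your Step 3: the fiber of $\Theta$ over $[F]$ is not an open in $\mathbb{A}^{3d+\chi}$ and has nothing to do with $H^0(F(3))=\mathrm{Hom}(\mo_{\p^2}(-3),F)$; it is an open subset of $\mathbb{P}(\mathrm{Ext}^1(F,\mo_{\p^2}(-3)))\cong\p^{-\chi-1}$, since Serre duality gives $\mathrm{Ext}^1(F,\mo_{\p^2}(-3))\cong H^1(F)^{\vee}$ of dimension $-\chi$ (this also matches the factor $\bl^{-\chi}-1$ in (\ref{une}), and the dimension count $\dim\mathbb{P}(\mathcal{E})=2\bar{d}+3d+\chi=d^2-\chi=\dim M(d,\chi)+(-\chi-1)$). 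The conclusion is unaffected, as the corrected fiber is still a projective space and the cancellation argument goes through verbatim.
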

\begin{proof}Let $N_0(d,\chi)$ and $Hilb^{\bar{d},0,0}(\p^2)$ be the scheme associated to $\mn_0(d,\chi)$ and $\mh^{\bar{d},0,0}$ respectively.  We can see that the projective bundle $\mathbb{P}(\mathcal{E}xt_p^1(\mf,\mo(-3)))$ over $N_0(d,\chi)$ is birational to the projective bundle $\mathbb{P}(\mathcal{H}om_p(\mo(-3),\mathcal{I}_{\bar{d}}))$ over $Hilb^{\bar{d},0,0}(\p^2)$ which is rational.  The universal sheaf $\mf$ exists by Theorem 3.19 in \cite{lee} and that is why we need $d,\chi$ coprime.  Hence we proved the corollary.
\end{proof}

\begin{rem}By Proposition 4.5 in \cite{yth}, $M(d,\chi)$ is rational for $\chi\equiv \pm1~mod~(d)$.
\end{rem}
\begin{rem}For $d$ and $\chi$ not coprime, let $M^{ss}(d,\chi)$ be the moduli space of semistable sheaves with parameters $(d,\chi)$, then $M^{ss}(d,\chi)-M(d,\chi)$ is not empty.  But the S-equivalence classes of strictly semistable sheaves form a closed subset of codimension $\geq d-1$ in $M^{ss}(d,\chi)$.  Hence we still have
\[\mu_A(M^{ss}(d,\chi))~\equiv~\bl^{3d+1+2\chi_0}\cdot \mu_A(Hilb^{\bar{d}}(\p^2)),~~~mod~(A_{d^2-\rho_d+1}).\]
However, since $M^{ss}(d,\chi)$ might not be smooth, we don't have similar conclusion to Corollary \ref{bet} on its Betti numbers. 
\end{rem}
\begin{rem}Generalization of Theorem \ref{main} to other rational surfaces is certainly possible and we believe our main strategy  works well to other surfaces.  Only one needs to take some effort to estimate the codimension of the subset containing all those``bad" points in the moduli spaces, which could be very tedious and difficult especially when Proposition \ref{ttc} does not hold.  By the proof of Lemma 4.2.7 in \cite{yuan}, Proposition \ref{ttc} holds also for Hirzebruch surfaces $\mathbb{P}(\mo_{\p^1}\oplus\mo_{\p^1}(-e))$ with $e=0,-1$.  Hence one can expect that the generalization to those two surfaces is tractable.   
\end{rem}

%%%%%%%%%%%%%%%%%%%%%%%%%%%%%%%%%%%%%%%%%%%%%%%%%%%%%%%%%%%%%%%%%%%%%%%%%%%%%%%%%%%%%%%%%%%%%%%%%%%%%%%%%%%%%%%%%%%%%%%%%%%%%%%%
\section*{Appendix}  %%%%%%%%%%%%             Appendix          %%%%%%%%%%%%%%
\appendix
\section{The proof of Theorem \ref{tt}.}
We give a whole proof of Theorem \ref{tt} in this section.  We state the theorem again here.
\begin{thm}[Theorem \ref{tt}]$\mt_2^o\cap \mc_k$ is of codimension $\geq d-1$ in $\mm(d,\chi)$.
\end{thm}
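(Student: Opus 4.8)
The plan is to stratify $\mt_2^o\cap\mc_k$ using the two filtrations of Proposition \ref{gck} and Proposition \ref{bcd} together with the constraint imposed by membership in $\mt_2^o$. Recall that for $F\in\mc_k$ with reduced support $C$ of degree $k$, Remark \ref{fib} tells us that $n_0:=r_1=t_m$ satisfies $F\in\mt_{n_0}$. Since we are now intersecting with $\mt_2^o=\mt_2-\mt_3$, we must have $r_1=t_m\leq 2$, so the sheaf has fibre dimension at most $2$ everywhere, and moreover fibre dimension exactly $2$ somewhere (otherwise $F$ would be locally free of rank $\frac dk$ on $C$ away from finitely many points only if $\frac dk\le 1$, which is excluded since $C$ is a proper subcurve). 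The key point is that the lower filtration $0=F_0\subsetneq\cdots\subsetneq F_l=F$ now has all quotients $Q_i$ of rank $r_i\in\{1,2\}$ with $r_1=2$ (the case $r_1=1$ forces $\mc_k^{1,\dots,1}$, already handled by Lemma \ref{rko}); so the relevant strata are those $\mc_k^{r_1,\dots,r_l}$ with $r_1=2$, hence $\frac dk = 2 + (\text{number of 1's})\cdot 1$ roughly, i.e. $l = \frac dk - 1$ and exactly one $r_i=2$ which by the monotonicity $r_1\ge\cdots\ge r_l$ must be $r_1$.

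The main steps I would carry out are as follows. First I would reduce to the single stratum $\mc_k^{2,1,1,\dots,1}$ (with $l-1$ ones, $l=\frac dk-1$, and $k\mid d$, $k<d$). Second, I would run the same extension-counting argument as in Lemma \ref{rko} and Proposition \ref{coha}: filter $F$ by $0\ra F_1\ra F\ra F/F_1\ra 0$ where $F_1=Q_1$ is a rank-$2$ torsion-free sheaf on $C$ (so $F_1$ varies in a family of dimension $\le \dim|kH| + 2g_C$ or so, using that rank-$2$ torsion-free sheaves on an integral curve $C$ form a family of the expected dimension — one should be a little careful here and may prefer to bound $F_1$ itself via its own sub-line-bundle, i.e. fit $0\ra L\ra F_1\ra L'\ra 0$ with $L,L'$ rank-$1$ torsion-free on $C$, exactly as in the proof of Proposition \ref{coha}), while $F/F_1\in\mc_k^{1,\dots,1(l-1)}$ contributes at most $(\frac{l-1}{l'}\cdots)$-type bounds where $l'=l-1$ and the reduced curve now carries rank $\frac dk-2$; and the gluing data lives in $\mathrm{Ext}^1(F/F_1,F_1)$ with the correction term $\dim\mathrm{Hom}(F_1(3),F/F_1)\le N$ to be controlled. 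Third, the control of $N$: any nonzero map $F_1(3)\ra F/F_1$ has image annihilated by $\delta_C$, hence lands in $Q_2$, so $N\le \dim\mathrm{Hom}(F_1(3),Q_2)$, and since $Q_2$ receives an injection from $Q_2(-C)$ into $Q_1=F_1$ with $0$-dimensional cokernel we can bound this by a $\mathrm{Hom}$ on $C$ twisted by $\omega_C\cong\mo_C(-3+C)$ and use $\dim\mathrm{Hom}(A,B\otimes\omega_C)\le \mathrm{rk}(A)\cdot(\deg B\otimes\omega_C) + \text{const}$ via Riemann–Roch on $C$. Fourth, plug the Euler characteristics $\chi(F/F_1,F_1) = -\,\mathrm{rk}(F_1)\cdot\mathrm{rk}(F/F_1)\cdot C.C = -2(\tfrac dk-2)k^2$ (Hirzebruch–Riemann–Roch) and $g_C=\frac{(k-1)(k-2)}2$ into the accumulated estimate and check the resulting polynomial in $d$ (and $k$, $l$) is $\le d^2-(d-1)$; this should work with room to spare because the rank profile $(2,1,\dots,1)$ is very far from the "balanced" profile that would be worst-case, and $C$ has small degree $k\le d/3$ in all the cases that matter (for $k$ large, $\frac dk$ is small and the stratification is short, while for $k$ small the genus $g_C$ is small).

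The hard part will be the bookkeeping in Step 4 — keeping all the contributions (the dimension of $|kH|$, the $2g_C$ from the rank-$2$ piece, the inductive bound on $\mc_k^{1,\dots,1(l-1)}$ which itself is a $C$ of degree $k$ but rank $\frac dk-2$ rather than a fresh curve, the $\mathrm{Ext}^1$ dimension $-\chi(F/F_1,F_1)+\dim\mathrm{Hom}$, and the $-1$ for the automorphism/scaling) straight and verifying the final inequality uniformly in the pair $(k,l)$ with $kl = d - k$, i.e. over all divisors $k\mid d$ with $k<d$. A secondary subtlety is justifying the dimension bound for the family of rank-$2$ torsion-free sheaves on a varying integral plane curve $C$ of degree $k$; the cleanest route is to avoid it entirely by the sub-line-bundle trick, reducing everything to compactified Jacobians (dimension $g_C$, integral, by \cite{aik}) and one more $\mathrm{Ext}^1$/$\mathrm{Hom}$ layer, at the cost of one additional inductive step but no new ideas. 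Once the polynomial inequality is verified, Proposition \ref{dlt}'s method (finiteness of the discrete invariants $(d(Q_i),\chi(Q_i))$ for fixed $(d,\chi,a)$, and $\mathrm{Ext}^2$-vanishing from $0$-dimensional overlap or from the $\omega_C$-duality giving a $\mathrm{Hom}$ that we have already bounded) packages the strata-wise estimates into the claimed codimension bound for $\mt_2^o\cap\mc_k$.
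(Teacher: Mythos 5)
There is a genuine gap at the very first step of your plan: the reduction of $\mt_2^o\cap\mc_k$ to the single stratum $\mc_k^{2,1,\dots,1}$ is false. Membership in $\mt_2^o$ says only that the fibre dimension is $\leq 2$ at every point; by Remark \ref{fib} this bounds the top rank of the (non-increasing) rank profile, $r_1=t_m\leq 2$, but it does not force the number of rank-$2$ factors to be one. For instance, a sheaf which is locally free of rank $2$ as an $\mo_{mC}$-module has fibre dimension exactly $2$ at every point of $C$ (its fibre at $x$ is $(\mo_{mC}\otimes k(x))^{\oplus 2}\cong k(x)^{\oplus 2}$), so it lies in $\mt_2^o$, while every factor of its lower and upper filtrations is a rank-$2$ sheaf on $C$; more generally all profiles $(2,\dots,2,1,\dots,1)$ with any number of $2$'s occur inside $\mt_2^o\cap\mc_k$. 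Your heuristic ``$\frac dk=2+(\text{number of 1's})$, hence exactly one $r_i=2$'' conflates the total rank $\sum r_i=\frac dk$ with the fibre dimension, which is controlled by $r_1$ alone, not by the number of rank-$2$ layers.

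The omitted strata are precisely where the real work of the theorem lies. For the all-$2$ profile the generic extension count plus a crude $\omega_C$-bound is not enough: the inductive step needs a bound on $\dim\text{Hom}(R,R(-3+C))$ for the rank-$2$ factor $R=F\otimes\mo_C$, which the paper obtains from stability of $F$ (connectedness in the sense of Remark \ref{lfr}, a normalization making $H^1(F(-1))=0$, hence $R$ globally generated by Mumford--Castelnuovo) and the resulting sequence $0\ra\mo_C\ra R\ra \det(R)\ra 0$. Moreover the induction must carry an estimate strictly stronger than codimension $d-1$ --- the extra negative terms of order $d^2/m^2$ in (\ref{dpr}) and (\ref{fcot}) --- because that surplus is exactly what absorbs the $\text{Hom}$/$\mathrm{Ext}^2$ corrections in the mixed profiles (the estimates (\ref{ext}), (\ref{exo}) and the case $\ell(1)\geq 2$ with its sheaves $P_i,K_i,B_i,L_i$). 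A stratum-by-stratum argument that only ever proves the $d-1$ bound, as you propose, would not close this induction. The portion of your outline that does overlap with the paper --- a single rank-$2$ factor glued to a chain of rank-$1$ factors, bounded by extension counting and $\text{Hom}$ through $\omega_C\cong\mo_C(-3+C)$, with the discrete invariants finite as in Proposition \ref{dlt} --- is consistent with the paper's Cases 3 and 4, but it is only a small piece of the full argument.
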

\begin{proof}Let $F\in\mt_2^o\cap\mc_k$ with lower and upper filtrations $\{F_i\}$ and $\{F^i\}$ (see Definition \ref{uplow}) with factors $\{Q_i\}$ and $\{R_i\}$ respectively.  Let $m$ be the length of the two filtrations.  Then $t_m=r_1\leq 2$ by Remark \ref{fib}.  If $r_1=2$, then $R_m\cong F\otimes\mo_C$ has to be locally free of rank 2.  Since $g^m_F:R_m\twoheadrightarrow R_{m-1}$ is surjective, $R_{m-1}$ is either of rank 1 or locally free of rank 2 and if $R_{m-1}$ is locally free of rank 2, then $g^m_F$ is an ismorphism.   

We prove the theorem case by case.  

\emph{Case 1.} $r_1=1$.  Then by Lemma \ref{rko} we are done. 

\emph{Case 2.} $r_i=2$ for all $1\leq i\leq m$.  %%%%%%%%%    Case 2 ,  r_i=2 for all i     %%%%%%%%%%

By induction if $F\in \mc_k^{2,\cdots,2}\cap\mt_2^o$, then $R_i\cong Q_i\cong R_m(-(m-i)C)$ and the two filtrations coincide with all  factors locally free of rank 2.  In this case $k=\frac d{2m}$.  Let $R:=R_m$.  Then $c_1(R)=\frac dm$ and we have 
\begin{equation}\label{chir}\sum_{i=0}^{m-1}\chi(R_m(-iC))=m\cdot\chi(R_m)-\frac{(m-1)m}4\cdot(\frac{d}{m})^2=\chi.\end{equation}
Hence $\chi(R)$ is fixed by $(d,\chi,k)$.  For every subsheaf $I$ of $R$, we have $\chi(R)-\chi(I)\geq \chi-a.$  Let $\mr$ be the parametrizing stack of such $R$.  We first show that 
\begin{eqnarray}\label{dpr}dim~\mr&\leq &\frac{d^2}{m^2}-(\frac dm-1)-(\frac 18(\frac dm)^2+\frac 34\frac dm)\nonumber\\
&= &\frac{d^2}{m^2}-(\frac dm-1)-(\frac 18(\frac {(1\cdot 0+1)d^2}{m^2}+\frac 34\frac dm).\end{eqnarray}

We assume $0<\chi\leq d$, then we have the following exact sequence.
\begin{equation}\label{two}0\ra\mo_C\ra R\ra I_2\ra0.
\end{equation}
We are done then by the same argument as in Lemma \ref{coha} and (\ref{dhal}) implies (\ref{dpr}). 

Now we want to use induction. Let $\mathcal{P}_{F/F_1}$  be the parametrizing stack of $F/F_1=F/R(-(m-1)C)$.  Then by induction assumption we have
\begin{eqnarray}\label{pff} dim~\mathcal{P}_{F/F_1}&\leq&
\frac{d^2(m-1)^2}{m^2}-(\frac{(m-1)d}m-1)\nonumber\\&&-(\frac 18\frac {((m-1)(m-2)+1)d^2}{m^2}+\frac 34\frac {(m-1)d}{m}).\end{eqnarray}  

$dim~\text{Ext}^2(F/F_1,F_1)=dim~\text{Hom}(F_1(3),F_2/F_1)=dim~Hom(R,R(-3+C))$.   We want to find a upper bound $N$ of $dim~Hom(R, R(-3+C)).$  Notice that $dim~Hom(R,R(-3+C))$ won't change if we replace $R$ by $R(n)$ for any number $n$.  

Since $F$ is stable, it is connected by Remark \ref{lfr}.  After replace $F$ by $F(n)$ for some suitable number $n$ we can assume $H^1(F(-1))=0$ and $H^1(F(-2))\neq0$.  Hence we know that the smallest degree of direct summands in $E_F$ must be no bigger than $0$ by Remark \ref{lfr}, and hence $\chi=\chi(F)\leq\frac{(d+1)d}2$ by the connectedness of $F$.  We see that  $H^1(R(-1))=0$ because $R$ is a quotient of $F$ and they are both of 1-dimensional.   Therefore by Mumford-Castelnuovo criterion (see e.g. Lemma 1.7.2 in \cite{hl}) $R$ is globally generated.  Hence $R$ fits in the following sequence.
\begin{equation}\label{gge}0\ra\mo_C\ra R\ra det(R)\ra 0.
\end{equation}
Hence $det(R)$ is also globally generated and $H^1(det(R)(-1))=0$.
Tensor (\ref{gge}) by $\mo_{\p^2}(-3+C)$ and we get
\begin{equation}\label{gee}0\ra \omega_C\ra R(-3+C)\ra det(R)(-3+C)\ra 0.
\end{equation}

Notice that $\mo_C(-3+C)\cong\omega_C$.  The functor $\text{Hom}(R,-)$ sends (\ref{gee}) into the following sequence.
\begin{equation}\label{homr}0\ra \text{Hom}(R,\omega_C)\ra \text{Hom}(R,R(-3+C))\ra \text{Hom}(R,det(R)(-3+C)).
\end{equation}
$\text{Hom}(R,\omega_C)=H^1(R)^{\vee}=0$ and hence
\[dim~ \text{Hom}(R,R(-3+C))\leq  dim~ \text{Hom}(R,det(R)(-3+C)).\]

The functor $\text{Hom}(-,det(R)(-3+C))$ sends (\ref{gge}) into the following sequence.
\begin{equation}\label{hodr}0\ra H^0(\mo_C(-3+C))\ra \text{Hom}(R,det(R)(-3+C))\ra H^0(det(R)(-3+C)).
\end{equation}  

$dim~H^0(\mo_C(-3+C))=g_C$.  Since $C$ is of degree at least 3 and $det(R)$ is globally generated, 
$dim~H^0(det(R)(-3+C))=\chi(det(R))+(-\frac{3d}{2m}+\frac{d^2}{4m^2})=\chi(R)+(g_C-1)+\frac{d^2}{4m^2}-\frac{3d}{2m}.$  
Hence we have 
\begin{eqnarray}\label{gr}dim~\text{Hom}(R,R(-3+C))&\leq& dim~\text{Hom}(R,det(R)(-3+C))\nonumber\\
&\leq& dim~H^0(\omega_C)+dim~H^0(det(R)(-3+C))\nonumber\\
&=&\chi(R)+2g_C-1+\frac{d^2}{4m^2}-\frac{3d}{2m}.
\end{eqnarray}
By (\ref{chir}) and $\chi\leq\frac{d(d-1)}2$, we get $\chi(R)\leq \frac{(d+1)d}{2m}+\frac{m-1}{4}\cdot (\frac d{m})^2$ 

By (\ref{gr}), we have
\begin{eqnarray}dim~\text{Hom}(R, R(-3+C))\leq \chi(R)+2g_C-1+\frac{d^2}{4m^2}-\frac{3d}{2m}\qquad\qquad&&\nonumber\\
\leq \frac{(d+1)d}{2m}+\frac{(m-1)d^2}{4m^2}+(\frac{d}{2m}-1)(\frac d{2m}-2)-1+\frac{d^2}{4m^2}-\frac{3d}{2m} =:N&&
\end{eqnarray}

Then we have 
\begin{eqnarray}\label{fcot}dim~\mc^{2,\cdots,2}_k\cap\mt_2^o&\leq& dim~\mathcal{P}_{F/F_1}+N-\chi(F/F_1,F_1)\nonumber\\
&\leq& \frac{d^2(m-1)^2}{m^2}-(\frac{(m-1)d}m-1)\nonumber\\
&&-(\frac 18\frac {(m^2-3m+3)d^2}{m^2}+\frac 34\frac {(m-1)d}m)+N+\frac{d^2(m-1)}{m^2}\nonumber\\
&\leq&  d^2-(d-1)-(\frac18\frac{(m^2-m+1)d^2)}{m^2}+\frac34d)+(1-\frac34\frac dm)\nonumber\\
&\leq&  d^2-(d-1)-(\frac18\frac{(m^2-m+1)d^2)}{m^2}+\frac34d).
\end{eqnarray}

In particular, the codimension of $\mc^{2,\cdots,2}_k\cap\mt_2^o$ is $\geq d-1$.  

Now we compute the codimension of  $\mc^{2,\cdots,2,1,\cdots,1}_k\cap\mt_2^o$.  We do the induction on the number $\ell(1)$ of $1$ in the superscript of $\mc^{2,\cdots,2,1,\cdots,1}_k$.   

\emph{Case 3.} $\ell(1)=1$.  %%%%%%%%%      Case 3,  l(1)=1       %%%%%%%%%

Let $F\in \mc^{2,\cdots,2,1}_k\cap\mt_2^o$.  Let $C$ be its reduced support with $deg(C)=k=\frac d{2m-1}$ with $m\geq2$.  We take the lower and upper filtrations $\{F_i\}$ and $\{F^i\}$ of $F$ with factors $\{Q_i\}$ and $\{R_i\}$ for $1\leq i\leq m$.  Then $R_m$ is a rank 2 bundle on $C$, $R_{i}\cong R_m((-m+i)C)$ for $2\leq i\leq m$ and $R_1$ is a rank 1 torsion free sheaf on $C$ with surjection $g^2_F:R_2(-C)\twoheadrightarrow R_1$.  Let $K$ be the kernel of $g^2_F$, then $K$ is torsion free of rank 1 and the subsheaf $F_1$ in the lower filtration lies in the following sequence.
\begin{equation}\label{suf}0\ra R_1\ra F_1\ra K(C)\ra 0.
\end{equation} 
For $m\geq 3$, we also have
\begin{equation}\label{sut}0\ra R_1(C)\ra F_2/F_1\ra K(2C)\ra 0.
\end{equation} 
By the stability of $F$, we know that
 \begin{equation}\label{kk}\frac{\chi(F_1)}{2deg(C)}=\frac {\chi(R_1)+\chi(K)+(\frac {d}{2m-1})^2}{\frac {2d}{2m-1}}\leq\frac {\chi}d.\end{equation}
 \begin{equation}\label{kat}\frac{\chi(F_2)}{4deg(C)}=\frac {2\chi(R_1)+2\chi(K)+4(\frac {d}{2m-1})^2}{\frac {4d}{2m-1}}\leq\frac {\chi}d,~for~m\geq3.\end{equation}
(\ref{kk}) and (\ref{kat}) imply that 
\begin{equation}\label{kb}\chi(R_1)+\chi(K)\leq \frac{\chi}{2m-1}-(\frac{d}{2m-1})^2.
\end{equation}
\begin{equation}\label{ka}\chi(R_1)+\chi(K)\leq \frac{2\chi}{2m-1}-2(\frac{d}{2m-1})^2,~for~m\geq3.
\end{equation}
Since $R_1$ is a quotient of $R_2(-C)$, $R_1((m-1)C)$ is a quotient of $R_m$ hence a quotient of $F$.  So 
\begin{equation}\label{cro}\frac{\chi(R_1)+\frac{(m-1)d^2}{(2m-1)^2}}{\frac d{2m-1}}\geq\frac {\chi}d\Leftrightarrow \chi(R_1)\geq \frac{\chi}{2m-1}-\frac{(m-1)d^2}{(2m-1)^2}.\end{equation}
Combine (\ref{kb}), (\ref{ka}) and (\ref{cro}), then we get 
\begin{equation}\label{drb}\chi(K)-\chi(R_1)\leq -\frac{d^2}{(2m-1)^2}+\frac{2(m-1)d^2}{(2m-1)^2},
\end{equation}
\begin{equation}\label{drk}\chi(K)-\chi(R_1)\leq -\frac{2d^2}{(2m-1)^2}+\frac{2(m-1)d^2}{(2m-1)^2}, ~for~m\geq3.
\end{equation}
We need a upper bound for $dim~\text{Ext}^2(F/R_1,R_1)=dim~\text{Hom}(R_1(3),F/R_1)$.  The upper and lower filtrations of $F/R_1$ coincide.  Hence $\text{Hom}(R_1(3),F/R_1)=\text{Hom}(R_1(3),R_2)$.  Then we have 
\begin{eqnarray}\label{rrg}&&dim~\text{Ext}^2(F/R_1,R_1)=dim~\text{Hom}(R_1(3),R_2)\nonumber\\
&\leq& dim~\text{Hom}(R_1,R_1(-3+C))+dim~\text{Hom}(R_1,K(-3+C))\nonumber\\
&\leq& 4g_C-2+\chi(K)-\chi(R_1).
\end{eqnarray} 
By (\ref{drb}) and (\ref{drk}) we have
\begin{equation}\label{lei}dim~\text{Ext}^2(F/R_1,R_1)\leq N:=\left\{\begin{array}{l}-\frac{d^2}{(2m-1)^2}+\frac{2(m-1)d^2}{(2m-1)^2}+4g_C-2,~for~m=2.\\
\\-\frac{2d^2}{(2m-1)^2}+\frac{2(m-1)d^2}{(2m-1)^2}+4g_C-2,
~for~m\geq3.\end{array}\right.
\end{equation}

Let $\mathcal{P}_{F/R_1}$ be the parametrizing stack of $F/R_1$.  We first assume $m\geq3$.  By (\ref{fcot}) and Proposition \ref{dlt}, we know that \begin{equation}\label{dfro}dim~\mathcal{P}_{F/R_1}\leq (\frac {(2m-2)d}{2m-1})^2-(\frac {(2m-2)d}{2m-1}-1).\end{equation}  

Hence by standard argument we have 
\begin{eqnarray}\label{exo}&dim&\mc^{2,\cdots,2,1}_k\cap\mt_2^o\leq dim~\mathcal{P}_{F/R_1}+g_C-1+N-\chi(R_1,F/R_1)\nonumber\\
&\leq&(\frac {(2m-2)d}{2m-1})^2-\frac {(2m-2)d}{2m-1}+g_C+N+\frac{(2m-2)d^2}{(2m-1)^2}\nonumber\\
&=& d^2-(d-1)+(-\frac12 d^2-\frac {7d}{2m-1}+3)\leq d^2-(d-1).
\end{eqnarray}
For $m=2$, by (\ref{dpr}) and (\ref{lei}) we have 
\begin{eqnarray}\label{ext}&dim&\mc^{2,1}_k\cap\mt_2^o\leq dim~\mathcal{P}_{R_2}+g_C-1+N-\chi(R_1,R_2)\nonumber\\
&\leq&(\frac 23d)^2-\frac {2}{3}d-\frac12(\frac d3)^{2}-\frac 12d+g_C+N+\frac{2d^2}{3^2}\nonumber\\
&=& d^2-(d-1)+(-\frac{17d}6+2)\leq d^2-(d-1).
\end{eqnarray}
Notice that one needs to replace $\frac dm$ in (\ref{dpr}) by $\frac23d$ to get the right formula.

We are done for $\ell(1)=1$.

\emph{Case 4: The last case.} $\ell(1)\geq 2$.   %%%%%%%   Case 4  l(1)\geq 2  the last case   %%%%%%%%

Let $F\in\mc_k^{2,\cdots,2,1,\cdots,1}\cap\mt_2^o $ with $\ell(1)\geq 2$.   Let $m_i=\ell(i)$ for $i=1,2$.  Let $C$ be the reduced support of $F$.  Then $deg(C)=\frac{d}{m_1+2m_2}\geq 3$.  By doing the upper filtration, we can write $F$ into the following sequence
\begin{equation}0\ra F'\ra F\ra F''\ra 0,
\end{equation}  
with $F'\in\mc_k^{1,\cdots,1'}$ and $F''\in\mc_k^{2,\cdots,2''}\cap \mt_2^{o''}.$  Those spaces with $'$ and $''$ are analogous spaces to $\mc_k$ and $\mt_2^o$ but with parameters $(d(F'),\chi(F'))$ and $(d(F''),\chi(F''))$ respectively.   

Take the upper and lower filtrations of $F'$ with graded factors $\{R'_i\}$ and $\{Q'_i\}$.  Then both $R'_i$ and $Q'_i$ are of rank 1.  Denote by $R'^{tf}_i$ the torsion free quotient of $R'_i$ module its torsion.  The surjection $g_{F'}^i:R'_i(-C)\ra R'_{i-1}$ identifies $R'^{tf}_i(-C)$ with $R'^{tf}_{i-1}$.  Moreover $Q'_{m_1}=R'^{tf}_{m_1}$.  $Q'_{i-1}$ is an extension of a 0-dimensional sheaf by $Q'_i$ and hence $\chi(Q'_{i})\leq\chi(Q'_{i-1})$. 

We know that the upper and lower filtrations of $F''$ coincide.  Let $R''_i$ be the factors.  Then $\{R''_i,R'_i\}$ is the set of graded factors of the upper filtration for $F$ and hence we have a surjection $g_F^{m_1+1}:R''_1(-C)\twoheadrightarrow R'_{m_1}$.  Hence we have a surjection $p^1_{m_1+1}:R''_1(-C)\twoheadrightarrow Q'_{m_1}$ as $Q'_{m_1}$ is a quotient of $R'_{m_1}$.  Let %$\widehat{K}_{m_1}$ and 
$K_{m_1}$ be the kernel of %$g_{F}^i$ and 
$p^1_{m_1+1}$.
%\begin{equation}\label{kqrh}0\ra \widehat{K}_{m_1}\ra R''_1(-C)\ra R'_{m_1}\ra0.
%\end{equation}
\begin{equation}\label{kqro}0\ra K_{m_1}\ra R''_1(-C)\ra Q'_{m_1}\ra0.
\end{equation}
%Then we have \[0\ra\widehat{K}_{m_1}\ra K_{m_1}\ra T'_{m_1}\ra0.\]
Denote by %$\widehat{P}_{m_1}$ and 
$P_{m_1}$ the subsheaf of $F/F'_{m_1-1}$ given by the following extension.
%\begin{equation}\label{pqrh}0\ra Q'_{m_1}\ra \widehat{P}_{m_1}\ra \widehat{K}_{m_1}(C)\ra 0.\end{equation}
\begin{equation}\label{pqro}0\ra Q'_{m_1}\ra P_{m_1}\ra K_{m_1}(C)\ra 0.
\end{equation}
Then $P_{m_1}$ is a $\mo_C$-module, i.e. it is a rank 2 torsion free sheaf on $C$.  This is because $p^1_{m_1+1}$ is defined by acting $\delta_C$ on $F/F'_{m_1-1}$ and $K_{m_1}$ is the kernel which implies $\delta_C\cdot P_{m_1}=0.$  Moreover,  $P_{m_1}$ is the maximal subsheaf of $F/F'_{m_1-1}$ annihilated by $\delta_C$, since $Q'_{m_1}$ is torsion free of rank 1.   

Again we have a map $p^1_{m_1}:P_{m_1}(-C)\ra Q'_{m_1-1}$ inducing the injection $f_{F'}^{m_1}:Q'_{m_1}(-C)\hookrightarrow Q'_{m_1-1}.$  However, the map $p^1_{m_1}$ is not necessarily surjective and we denote by $S'_{m_1-1}(-C)$ its image in $Q'_{m_1-1}$.  We have $Q'_{m_1}(-C)\subset S'_{m_1-1}(-C)\subset Q'_{m_1-1}$.  

Let $K_{m_1-1}$ be the kernel of $p^1_{m_1}$, then 
\begin{equation}\label{kcc}\chi(K_{m_1})+\chi(Q'_{m_1}(-C))-\chi(Q'_{m_1-1})\leq\chi(K_{m_1-1})\leq \chi(K_{m_1}).
\end{equation}   

Again we have a subsheaf $P_{m_1-1}$ of $F/F'_{m_1-2}$ such that $P_{m_1-1}$ is a rank 2 torsion free sheaf on $C$ lying in the following exact sequence.
\begin{equation}\label{pqrt}0\ra Q'_{m_1-1}\ra P_{m_1-1}\ra K_{m_1-1}(C)\ra0.
\end{equation}
By (\ref{kcc}), we have 
\begin{equation}\label{ppcc}\chi(P_{m_1})-C.C\leq\chi(P_{m_1-1})\leq \chi(P_{m_1})-C.C+\chi(Q'_{m_1-1})-\chi(Q'_{m_1}(-C)).\end{equation} 

We repeat this procedure and finally we get
\begin{equation}\label{pqrl}0\ra Q'_{1}\ra P_{1}\ra K_1(C)\ra0.
\end{equation}
$\chi(P_1)\geq \chi(P_{m_1})-(m_1-1)C.C=\chi(R''_1)-m_1C.C$ by (\ref{ppcc}), (\ref{kqro}), (\ref{pqro}) and induction assumption on $P_{i}$ for $i>1$.  

It is easy to see $P_1=F_1$ with $\{F_i\}$ the lower filtration of $F$.  By the stability of $F$, we have 
\begin{equation}\label{gcro}\chi(Q'_1)\geq \chi(Q'_{m_1})-\frac{(m_1-1)d^2}{(m_1+2m_2)^2}\geq \frac{\chi}{m_1+2m_2}-\frac{(m_1+2m_2-1)d^2}{(m_1+2m_2)^2}.\end{equation}
\begin{equation}\label{gcrt}\chi(Q'_1)+\chi(K_1)\geq \chi(R''_{m_2})-\frac{(m_1+2m_2-1)d^2}{(m_1+2m_2)^2}\geq \frac{2\chi}{m_1+2m_2}-\frac{(m_1+2m_2-1)d^2}{(m_1+2m_2)^2}.\end{equation}

On the other hand, $Q'_1$ is a subsheaf of $F$.  Hence $\chi(Q'_1)\leq \frac{\chi}{(m_1+2m_2)}$, then by (\ref{ppcc}) we have 
\begin{equation}\label{ubk}\chi(K_1)\geq \frac{\chi}{m_1+2m_2}-\frac{(m_1+2m_2-1)d^2}{(m_1+2m_2)^2}.
\end{equation}

If $m_2=1$, then $F/P_1\in\mc_k^{1,\cdots,1}$ and by Lemma \ref{rko} the parametrizing stack $\mathcal{P}_{F/P_1}$ has dimension $\leq (\frac {m_1d}{m_1+2})^2-(\frac{m_1d}{m_1+2}-1)$.  On the other hand, $\text{Hom}(P_1(3),F/P_1)=\text{Hom}(P_1(3),S'_2(-C))\subset\text{Hom}(P_1(3),Q'_2).$
\begin{eqnarray}\label{grrg}&&dim~\text{Ext}^2(F/F_1,F_1)\leq dim~\text{Hom}(F_1(3),Q'_2)\nonumber\\
&\leq& dim~\text{Hom}(Q'_1(3),Q''_2)+dim~\text{Hom}(K_1(C+3),Q'_2)\nonumber\\
&\leq&dim~\text{Hom}(Q'_1(3),Q'_1(C))+dim~\text{Hom}(K_1(C+3),Q'_1(C))\nonumber\\
&\leq& 4g_C-2+\chi(Q'_1)-\chi(K_1(C))\nonumber\\
&\leq& 4g_C-2-\frac{d^2}{(m_1+2m_2)^2}+\frac{(m_1+2m_2-1)d^2}{(m_1+2m_2)}=:N.
\end{eqnarray} 
(\ref{dpr}) gives a upper bound for the dimension of the parametrizing stack of $P_1$.  By using analogous estimate to (\ref{ext}), we proved the case $m_2=2$.

Let $m_2\geq2$.  Then we start the previous procedure again with the surjective map $p^2_{m_1+1}:R''_{2}(-C)\xrightarrow{\cong} R''_{1}\twoheadrightarrow Q'_{m_1}(C)$.  Let $L_{m_1}$ be the kernel of $p^2_{m_1+1}$, then $L_{m_1}\cong K_{m_1}(C)$.  
Define $B_{m_1}$ analogously to $P_{m_1}$ and it lies in the following sequence.
\[0\ra Q'_{m_1}\ra B_{m_1}\ra L_{m_1}(C)\ra0.\]

Then we have a map $p^2_{m_1+1}:B_{m_1}(-C)\ra S'_{m_1-1}(-C)$.  Notice that we have $S'_{m_1-1}(-C)$ instead of $Q'_{m_1-1}$, with $S'_{m_1-1}(-C)$ the image of $p^1_{m_1}$ and $Q'_{m_1}(-C)\subset S'_{m_1-1}(-C)\subset Q'_{m_1-1}$.  Denote by $S'_{i-1}(-C)$ the image of $p^1_{i}$ for $2\leq i\leq m_1$.  We have that $\chi(Q'_{i}(-C))\leq\chi( S'_{i-1}(-C))\leq\chi( Q'_{i-1}).$  

Let $B_{m_1-1}$ be the kernel of $p^2_{m_1}$.  We then get $\{B_i\}$ and $\{L_i\}$ inductively analogous to $\{P_i\}$ and $\{K_i\}$.  We also have    
\begin{equation}\label{tcc}\chi(L_{m_1})+\chi(Q'_{m_1}(-C))-\chi(S'_{m_1-1})\leq\chi(L_{m_1-1})\leq \chi(L_{m_1}).
\end{equation}   
\begin{equation}\label{tpcc}\chi(B_{m_1})-C.C\leq\chi(B_{m_1-1})\leq \chi(B_{m_1})-C.C+\chi(S'_{m_1-1})-\chi(Q'_{m_1}(-C)).\end{equation}

Finally we get the maximal subsheaf $B_1$ of $F/F_1$ annihilated by $\delta_C$.  $B_1$ lies in the following sequence 
\begin{equation}\label{pqrbl}0\ra S'_{1}\ra B_{1}\ra L_1(C)\ra0.
\end{equation}
The injection $f^2_F:B_1(-C)\hookrightarrow P_1$ induces the injection $S'_{1}(-C)\hookrightarrow Q'_1$.  

By induction assumption on $\ell(1)$, we have 
\begin{equation}\label{time}dim~\mathcal{P}_{F/Q'_1}\leq \frac {(m_1+2m_2-1)^2d^2}{(m_1+2m_2)^2}-(\frac{(m_2+2m_2-1)d}{m_1+2m_2}-1).\end{equation}
Let $P'_1$ be the maximal subsheaf of $F/Q'_1$ annihilated by $\delta_C$.  Then we have the following sequence.
\[0\ra K_1(C)\ra P'_1\ra \widetilde{S}'_1\ra 0.\]
$\widetilde{S}'_1(-C)$ is the preimage of $Q'_1$ via the injection $f_F^2$ and hence $S'_1\subset \widetilde{S}'_1\subset Q'_1(C)$.  

By induction, (\ref{kcc}) and (\ref{tcc}) imply the following two equations respectively.
\begin{equation}\label{gkcc}\chi(K_{m_1})+\chi(Q'_{m_1}((1-m_1)C))-\chi(Q'_{1})\leq\chi(K_{1})\leq \chi(K_{m_1}).
\end{equation}  
\begin{equation}\label{gtcc}\chi(L_{m_1})+\chi(Q'_{m_1}((1-m_1)C))-\chi(S'_{1})\leq\chi(L_{1})\leq \chi(L_{m_1}).
\end{equation}

(\ref{ppcc}) and (\ref{tpcc}) imply the following two equations respectively.\small
\begin{equation}\label{gppcc}\chi(P_{m_1})-(m_1-1)C.C\leq\chi(P_1)\leq \chi(P_{m_1})-(m_1-1)C.C+\chi(Q'_1)-\chi(Q'_{m_1}((1-m_1)C)).\end{equation} 
\begin{equation}\label{gtpcc}\chi(B_{m_1})-(m_1-1)C.C\leq\chi(B_1)\leq \chi(B_{m_1})-(m_1-1)C.C+\chi(S'_{1})-\chi(Q'_{m_1}((1-m_1)C)).\end{equation}
\normalsize

Notice that $B_1$ is a subsheaf of $F/P_1$.  Let $\eta:=\chi(Q'_1)-\chi(Q'_{m_1}((1-m_1)C))$.  Recall that $L_{m_1}\cong P_{m_1}(C)$.  Since $m_2\geq 2$, by (\ref{gkcc}), (\ref{gtcc}), (\ref{gppcc}), (\ref{gtpcc}), (\ref{pqrl}), (\ref{pqrbl}) and stability of $F$, we get the following formula analogous to (\ref{ka}).  
\begin{eqnarray}\label{gdrk}&&2\chi(P_1)-2\eta+\frac{2d^2}{m_1+2m_2}\leq\chi(P_1)+\chi(B_1)\leq\frac{4\chi}{m_1+2m_2}%-\frac{2d^2}{m_1+2m_2}
\nonumber\\
&\Rightarrow& \chi(K_1)+\chi(Q'_1)\leq\frac{2\chi}{m_1+2m_2}-\frac{2d^2}{(m_1+2m_2)^2}+\eta.
\end{eqnarray}
By (\ref{gcro}) we know
\[\chi(Q'_1)=\chi(Q'_{m_1}((1-m_1)C))+\eta\geq\frac{\chi}{m_1+2m_2}-\frac{(m_1+2m_2-1)d^2}{(m_1+2m_2)^2}+\eta.\]
We then have 
\begin{eqnarray}\label{kqd}\chi(K_1)-\chi(Q'_1)&\leq& \frac{2(m_1+2m_2-1)d^2}{(m_1+2m_2)^2}-\frac{2d^2}{(m_1+2m_1)^2}-\eta\nonumber\\&\leq& \frac{2(m_1+2m_2-1)d^2}{(m_1+2m_2)^2}-\frac{2d^2}{(m_1+2m_1)^2}.
\end{eqnarray}
On the other hand, we have
\begin{eqnarray}\label{rrgf}&&dim~\text{Ext}^2(F/Q'_1,Q'_1)=dim~\text{Hom}(Q'_1(3),P'_1)\nonumber\\
&\leq& dim~\text{Hom}(Q'_1,K_1(-3+C))+dim~\text{Hom}(Q'_1,Q'_1(-3+C))\nonumber\\
&\leq& 4g_C-2+\chi(K_1)-\chi(Q'_1).
\end{eqnarray} 

Now combine (\ref{time}), (\ref{kqd}) and (\ref{rrgf}), use the same estimate as we used in Case 3 for $m\geq 3$, and at last we get an analogous formula to (\ref{exo}).  The last case is done. 

The theorem is proved.  
\end{proof}

Yao YUAN \\
Yau Mathematical Sciences Center, Tsinghua University, \\
Beijing 100084, China\\
E-mail: yyuan@mail.math.tsinghua.edu.cn.

\end{document}